\DeclareMathAlphabet{\cmcal}{OMS}{cmsy}{m}{n}
\newtheoremstyle{thm}
  {3pt}
  {3pt}
  {\em}
  {0pt}
  {\bfseries}
  {}
  {5pt}
  {}
\newtheoremstyle{rem}
  {3pt}
  {3pt}
  {}
  {0pt}
  {\bfseries}
  {.}
  {5pt}
  {}
\newtheorem{thm}{Theorem}[section]
\newtheorem{cor}[thm]{Corollary}
\newtheorem{lem}[thm]{Lemma}
\newtheorem{prop}[thm]{Proposition}
\newtheorem{conj}[thm]{Conjecture}
\theoremstyle{definition}
\newtheorem{defn}[thm]{Definition}
\theoremstyle{rem}
\newtheorem{rem}[thm]{{Remark}}
\numberwithin{equation}{section} \numberwithin{table}{section}
\newtheorem*{thm*}{Theorem}
\newtheorem*{rem*}{Remark}
\newtheorem*{rems*}{Remarks}
\newtheorem*{exam*}{Example}
\newtheorem*{exams*}{Examples}
\newcommand{\neutralize}[1]{\expandafter\let\csname c@#1\endcsname\count@}
\def\bos#1{{\mathbf{#1}}}
\newcommand{\dlog}{\mathrm{dlog}}
\newcommand{\on}{\operatorname}
  \newcommand{\nc}{\newcommand}
  \newcommand{\be}{\begin{eqnarray*}}
  \newcommand{\ee}{\end{eqnarray*}}
  \newcommand{\bea}{\begin{eqnarray}}
  \newcommand{\eea}{\end{eqnarray}}
  \newcommand{\bs}{\begin{split}}
  \newcommand{\es}{\end{split}}
  \newcommand{\bal}{\begin{align}}
  \newcommand{\eal}{\end{align}}
   \nc{\bei}{\begin{itemize}}
   \nc{\eei}{\end{itemize}}
   \nc{\bee}{\begin{enumerate}}
   \nc{\eee}{\end{enumerate}}
   \nc{\bet}{\begin{thm}}
   \nc{\eet}{\end{thm}}
   \nc{\bed}{\begin{defn}}
   \nc{\eed}{\end{defn}}
   \nc{\bel}{\begin{lem}}
   \nc{\eel}{\end{lem}}
   \nc{\bep}{\begin{prop}}
   \nc{\eep}{\end{prop}}
   \nc{\bec}{\begin{corollary}}
   \nc{\eec}{\end{corollary}}
   \nc{\ber}{\begin{rem}}
   \nc{\eer}{\end{rem}}
   \nc{\beex}{\begin{example}}
   \nc{\eeex}{\end{example}}
   \nc{\bpm}{\begin{pmatrix}}
   \nc{\epm}{\end{pmatrix}}
   \nc{\bspm}{\left(\begin{smallmatrix}}
   \nc{\espm}{\end{smallmatrix}\right)}
\newcommand{\cC}{\mathcal{C}}
\newcommand{\cE}{\mathcal{E}}
\newcommand{\cF}{\mathcal{F}}
\newcommand{\cL}{\mathcal{L}}
\newcommand{\cO}{\mathcal{O}}
\newcommand{\cP}{\mathcal{P}}
\newcommand{\cR}{\mathcal{R}}
\newcommand{\cS}{\mathcal{S}}
\newcommand{\bA}{\mathbb{A}}
\newcommand{\bC}{\mathbb{C}}
\newcommand{\bF}{\mathbb{F}}
\newcommand{\bN}{\mathbb{N}}
\newcommand{\bQ}{\mathbb{Q}}
\newcommand{\bR}{\mathbb{R}}
\newcommand{\bZ}{\mathbb{Z}}
\nc{\frf}{\mathfrak{f}} 
\nc{\frs}{\mathfrak{s}}  
\nc{\frt}{\mathfrak{t}} 
\nc{\fru}{\mathfrak{u}}
\nc{\lsl}{\mathfrak{sl}}
\nc{\lgl}{\mathfrak{gl}}
\nc{\upsi}{\underline{\psi}}
\nc{\uchi}{\underline{\chi}}
\DeclareMathOperator{\Hom}{Hom}
\DeclareMathOperator{\GL}{GL}
\DeclareMathOperator{\sign}{\text{sign}}
\newcommand{\lra}{\longrightarrow}    
\nc{\surjto}{\twoheadrightarrow}
\nc{\ts}{\times}
\nc{\ds}{\displaystyle}
\nc{\nd}{\noindent}  
\nc{\ud}{\underline}
\nc{\ov}{\overline}
\nc{\maplra}[1]{\buildrel #1 \over \lra}
\nc{\mapto}[1]{\buildrel #1 \over \to}
\nc{\setb}[1]{\{  #1\}}
 \nc{\cHom}{\mathcal{H}om}
\newcommand{\QQ}{\mathbb{Q}}
\newcommand{\FF}{\mathbb{F}}
\newcommand{\ZZ}{\mathbb{Z}}
\newcommand{\RR}{\mathbb{R}}
\nc{\cdruur}[8] {\begin{CD} 
#1 @>#2>> #3\\ 
@AA#4A @AA#5A\\ 
#6 @>#7>> #8 
\end{CD} }
\nc{\cdrddr}[8] {\begin{CD} 
#1 @>#2>> #3\\ 
@VV#4V @VV#5V\\ 
#6 @>#7>> #8 
\end{CD} }
\nc{\dia}[8]{\xymatrix{ 
&#1 \ar@{-}[ld]_{#2} \ar@{-}[rd]^{#3} \\
#4 \ar@{-}[rd]_{#6} & &#5 \ar@{-}[ld]^{#7}\\ 
&#8} }
\nc{\diam}[9]{\xymatrix{ 
&#1 \ar@{-}[ld]_{#2}  \ar@{-}[d]^{#3} \ar@{-}[rd]^{#4} \\
#5 \ar@{-}[rd]_{#8}     & #6 \ar@{-}[d]_{#9}      & #7   \ar@{-}[ld]^{2} \\
& \bQ} } 
\nc{\sumn}[2][n]{#2_{1} +#2_{2}+ \cdots + #2_{#1}}
\nc{\poly}[3][n]{#2_{#1}#3^{#1} +#2_{#1-1}#3^{#1-1}  \cdots + #2_{1} #3+ #2_0}
\nc{\dpoly}[3][n]{#1#2_{#1}#3^{#1-1} +(#1-1)#2_{#1-1}#3^{#1-1}  \cdots +2 #2_{2} #3+ #2_1}
\nc{\mpoly}[3][n]{#3^{#1} +#2_{#1-1}#3^{#1-1}  \cdots + #2_{1} #3+ #2_0}
\nc{\vpar}[4]{    \left \{ \begin{array}{cc} #1 & \textrm{if } #2, \\
&\\
#3 & \textrm{if } #4. 
\end{array}\right. }
\nc{\ary}[5]{#1: \left\{ \begin{array}{ll} #2 &\mapsto #3 \\ #4 &\mapsto #5 \end{array} \right.}
 \nc{\bedm}{\begin{displaymath}}
 \nc{\eedm}{\end{displaymath}}
 \nc{\art}{\hbox{\bf Art}^\Z}
 \nc{\bvx}{\bos{B\!\!V}_{\! \!X}}
\newcommand{\pmat}{\left(\begin{matrix}}   
\newcommand{\epmat}{\end{matrix}\right)}   
\newcommand{\psmat}{\left(\begin{smallmatrix}}    
\newcommand{\epsmat}{\end{smallmatrix}\right)}
\nc{\twotwo}[4]{\pmat #1 & #2 \\ #3 & #4 \epmat}
\nc{\thrthr}[9]{\pmat #1 & #2 & #3 \\ #4 & #5 & #6 \\ #7 & #8 & #9 \epmat}
\nc{\stwotwo}[4]{\psmat #1 & #2 \\ #3 & #4 \epsmat}
\nc{\sthrthr}[9]{\psmat #1 & #2 & #3 \\ #4 & #5 & #6 \\ #7 & #8 & #9 \epsmat}
\def\eqalign#1{\null\,\vcenter{\openup\jot\m@th
\ialign{\strut\hfil$\displaystyle{##}$&$\displaystyle{{}##}$\hfil
\crcr#1\crcr}}\,}
\def\eqn#1#2{
\xdef #1{(\nsecsym\the\meqno)}
\global\advance\meqno by1
$$#2\eqno#1\eqlabeL#1
$$}
\def\a{\alpha}
\def\d{\delta}  
\def\e{\varepsilon} 
\def\g{\gamma}  
\def\l{\lambda}
\def\s{\sigma}
\def\R{\mathbb{R}}
\def\C{\mathbb{C}}
\def\Z{\mathbb{Z}}
\def\Q{\mathbb{Q}}
\def\mod{\hbox{ }mod\hbox{ }}
\def\newsec#1{\global\advance\nsecno by1
\eqnres@t
\section{#1}}
\def\eqnres@t{\xdef\nsecsym{\the\nsecno.}\global\meqno=1}
\def\sequentialequations{\def\eqnres@t{\bigbreak}}\xdef\nsecsym{}
\def\draftmode{\message{ DRAFTMODE }

{\count255=\time\divide\count255 by 60 \xdef\hourmin{\number\count255}
\multiply\count255 by-60\advance\count255 by\time
\xdef\hourmin{\hourmin:\ifnum\count255<10 0\fi\the\count255}}}
\def\nolabels{\def\wrlabeL##1{}\def\eqlabeL##1{}\def\reflabeL##1{}}
\def\writelabels{\def\wrlabeL##1{\leavevmode\vadjust{\rlap{\smash%
{\line{{\escapechar=` \hfill\rlap{\tt\hskip.03in\string##1}}}}}}}%
\def\eqlabeL##1{{\escapechar-1\rlap{\tt\hskip.05in\string##1}}}%
\def\reflabeL##1{\noexpand\llap{\noexpand\sevenrm\string\string\string##1}
}}
\def\eqn#1#2{
\xdef #1{(\nsecsym\the\meqno)}
\global\advance\meqno by1
$$#2\eqno#1\eqlabeL#1
$$}
\def\eqalign#1{\null\,\vcenter{\openup\jot\m@th
\ialign{\strut\hfil$\displaystyle{##}$&$\displaystyle{{}##}$\hfil
\crcr#1\crcr}}\,}
   \nc{\hr}{[\![\hbar]\!]}
\def\foot#1{\footnote{#1}}
\nc{\bt}{\mathbf{t}}
\begin{document}

\title[The Milnor K-theory and the Shintani cocycle ]{The Milnor K-theory and the Shintani cocycle}




%
\author{Sung Hyun Lim}
\email{finnlimsh@gmail.com}
\address{Mathematical Institute, University of Oxford, Radcliffe Observatory, Andrew Wiles Building, Woodstock Rd, Oxford OX2 6GG, United Kingdom}
\author{Jeehoon Park}
\email{jeehoonpark@postech.ac.kr}
\address{Department of Mathematics, POSTECH (Pohang University of Science and Technology), San 31, Hyoja-Dong, Nam-Gu, Pohang, Gyeongbuk, 790-784, South Korea. }

\subjclass[2000]{11F15, 11R70}

\keywords{Shintani's cocycle, Milnor's $K$-theory, modular symbols.}

\begin{abstract}
The goal of this article is to complete the unfinished construction (due to Glenn Stevens in an old preprint \cite{S}) of a certain Milnor $K$-group valued group cocycle for $\GL_n(\bQ)$ where $n$ is a positive integer, which we call the Stevens cocycle. Moreover, we give a precise relationship between the Stevens cocycle and the Shintani cocycle, which encodes key informations on the zeta values of totally real fields of degree $n$, using the $\dlog$ map of $K$-theory and the Fourier transform of locally constant functions on $\bQ^n$ with bounded support. Roughly speaking, the Stevens cocycle is a multiplicative version of the Shintani cocyle.
\end{abstract}
\maketitle
\tableofcontents


\section{Introduction}


In an old unfinished preprint \cite{S} around 2007, Glenn Stevens proposed a definition of a certain Milnor $K$-group valued group cocycle for $\GL_n(\bQ)$ which is related to period integrals of Eisenstein series and the Milnor $K$-theory. Though his preprint has the crux of computations and key ideas, the definitions (of $\GL_n(\bQ)$-group actions on various objects) and theorems were not stated and proved clearly.
In fact, the second named author clarified this issue for $n=2$ case and published the result in \cite{P}. He explained how the Milnor $K_2$-group valued symbol is related to periods of Eisenstein series and $p$-adic partial zeta functions for real quadratic fields in $n=2$ case. This $n=2$ case also has intimate connections to \cite{Ce} and \cite{Sh}.

The first goal of this paper is to clarify Stevens's construction in \cite{S} for general $n \geq 2$ and construct a group cocycle for $\GL_n(\bQ)$, as is done in \cite{P} for $n=2$ case.
The second goal is to make precise its relationship with the Shintani group cocycle for $\GL_n(\bQ)$ constructed and studied by R. Hill, \cite{H}. 
The Shintani cocycle for $\GL_n(\bQ)$ encodes key informations on the zeta values of totally real fields of degree $n$. Its construction was given in \cite{Solo} for $n=2,3$ case and later generalized to general $n$ in \cite{H}. 
It has an intimate relationship with the Eisenstein cocycle:
see \cite{CDG1} and \cite{CDG2} for example.
The $p$-adic integrality of the Shintani cocycle and its relationship to the $p$-adic $L$-functions for totally real fields (and the $p$-adic Shintani zeta functions) was studied in \cite{AS} and \cite{Das}.
Moreover, the Shintani cocycle also plays an important role in the Gross-Stark conjecture: see \cite{Spi} for example. 

We set up the notation for statements of main theorems. Let $V=\bQ^n$ be a rational vector space of dimension $n$.
Let $V^*=\Hom(V, \bQ)$ be the dual vector space.
For any $\bQ$-algebra $R$, we define a $R$-module $V_R= V\otimes_\bQ R$.
We use $\GL(V)=\GL_n(\bQ)$ (respectively, $\GL^+(V)=\GL_n^+(\bQ)$) to denote the general linear group on $V$ (respectively, with positive determinant).
Let $\cS(V)$ be the abelian group of locally constant functions on $V$ with bounded support.
Then $\cS(V)$ is generated by characteristic function 
$$
\chi_{\underline a + d \bZ^n}
$$ 
on the affine lattice $\underline a + d \bZ^n$, where $\underline a \in \bQ^n, d \in \bN$.
For any abelian group $M$, define the space of naive distributions with values in $M$
$$
\on{Dist} (V, M) :=\Hom_{\bZ}(\cS(V), M).
$$

The first main concept is the Shintani cocycle and the Naive Shintani function. The Shintani cocycle, defined per each natural number $n$, is a homogeneous group $n$-cocycle:
    \begin{align*}
        [\Phi^{Sh}_n] \in H^{n-1}\left(\on{GL}_n(\RR), \operatorname{Dist} \left(\bA_f^n \backslash \{0\}, \bC((T_1, \cdots T_n))  \right)\right),
    \end{align*}
    where $\bA_f$ is the ring of finite adeles of $\bQ$
 and $\bC(( T_1, \cdots, T_n))$ is the field of Laurent power series with $n$-variables $T_1, \cdots, T_n$ (defined as the fraction  field of $\bC[[T_1, \cdots T_n]]$).
 We refer to \cite{H} for its concrete relationship with the zeta values of a totally real field of degree $n$.
 The Shintani cocycle $\Phi_n^{Sh}$ is defined by the Solomon-Hu pairing (see \ref{SH} for its description and also see \cite{Solo})
\begin{align*}
& \langle \cdot , \cdot \rangle_{SH} : \cL_{\QQ^n} \times \cS(\bA_f^n \backslash \{0\}) \rightarrow \bC((T_1, \cdots T_n )),
\end{align*}
where $\mathcal L_{\Q^n}$ is the abelian group generated by characteristic functions of open rational cones, modulo constant functions,
 and an appropriate polyhedral cone function $\sigma_n^{Sh}$:
$$\Phi_n^{Sh}:= \langle \sigma_n^{Sh}, \cdot \rangle_{SH},$$
The polyhedral cone function $\sigma_n^{Sh}$ (see \ref{pcf}) is defined as a cone function in $\FF^n$ for ordered field $\FF = \RR((\epsilon_1)) \cdots ((\epsilon_n))$ followed by the restriction to $\R^n$ under the embedding $\R^n \rightarrow \FF^n$.
 We define a related variant of the Shintani cocycle, which we call \textit{the Naive Shintani function}:
 
\begin{defn}\label{nsh}
Define a function $\Phi_n^{NSh}$ 
\begin{eqnarray*}
&\Phi_n^{NSh}  : (\GL_n \Q)^n \rightarrow \on{Dist} ( \Q^n, \bC (( T_1, \cdots T_n )) ) \\
 &\Phi_n^{NSh}(\gamma_1, \cdots \gamma_n)(f) := \langle \chi_{\RR^+ e_1 \gamma_1^{T} + \cdots + \RR^+ e_1 \gamma_n^{T} }, f \rangle_{SH}.
\end{eqnarray*}
\end{defn}
It shares similar properties with the Shintani cocycle:
$$
  \Phi_n^{Sh} (1, \rho, \cdots \rho^{n-1}) = \langle \chi_{\RR^+e_1+ \cdots + \RR^+e_n}, f \rangle_{SH}= \Phi_n^{NSh} (1, \rho, \cdots \rho^{n-1}), \  \rho^{-1}:=\begin{pmatrix} 0 & 1 & 0 &\cdots& \cdots & 0 \\ 0& 0& 1 & 0 & \cdots  & 0  \\ 0& \cdots& \cdots &\cdots& \cdots& 0\\ 0&0 &\cdots& \cdots& 1& 0\\ 1&0 &\cdots &\cdots &\cdots & 0\end{pmatrix}
$$
where the matrix $\rho$ is the shift-permutation matrix defined by $\rho_{1,n} = 1, \rho_{i+1, i} = 1$ for $i=1, \cdots n-1$ and all other entries zero, and under the relevant group actions (see the appendix I, subsection \ref{sec4.1} for our precise description of the group actions) we have
    \begin{align*}
        \Phi_n^{Sh}(\gamma \gamma_1 ,\cdots \gamma \gamma_n) = (\sign \g) \Phi_n^{Sh}(\gamma_1, \cdots \gamma_n) |_{\gamma^{T}}, 
        \qquad   \Phi_n^{NSh}(\gamma \gamma_1 ,\cdots \gamma \gamma_n) = \Phi_n^{NSh}(\gamma_1, \cdots \gamma_n) |_{\gamma^{T}},
    \end{align*}
    for $\g, \g_1, \cdots, \g_n \in \GL_n(\bQ)$.
See Proposition \ref{rhos} for the first statement and Proposition \ref{Sproperty} for the second statement (the $\GL_n(\bQ)$-equivariance) for $\Phi_n^{Sh}$ and $\Phi_n^{NSh}$. Observe that the Naive Shintani function  and  the Shintani cocycle are both defined using the Solomon-Hu pairing, but the Naive version has a simpler  definition involving cone functions  with basis  vectors  moved simply by matrices. This is indeed the most naive way to proceed, but is not enough  to make  the Naive Shintani function  into a group cocycle; much work has been done, for example in \cite{H}, to modify the cone function to fix the naive definition. Part of our result (Corollary \ref{co}) is that the Naive Shintani function is a cocycle in an appropriate  quotient group.

Next we introduce the Stevens cocycle, whose definition involves Milnor K-theory. Let $K_n^M(V)$ be the  $n$-th Milnor $K$-group of the ring of trigonometric functions (see \ref{crv}).
There is an additive homomorphism (see \ref{dlo} for its precise definition and also look at section 2.3, \cite{K} for its theoretical description):
\begin{eqnarray*}
{\dlog}: & K_n^M(V) & \rightarrow \mathbb C((z_1, \cdots z_n)) \mathrm{d} z_1 \wedge \cdots \wedge \mathrm{d} z_n \\
& \{f_1, \cdots, f_n\} & \mapsto \dlog f_1 \wedge \cdots \wedge \dlog f_n.
\end{eqnarray*}
Let $\tilde{K}_n^M(V)=K_n^M(V)/J_n'(V)$ is a certain quotient of $K_n^M(V)$ (see Definition \ref{targetk}) and 
$$J_n' \subseteq  \mathbb C((z_1, \cdots z_n)) \mathrm{d} z_1 \wedge \cdots \wedge \mathrm{d} z_n
=\mathbb C((T_1, \cdots T_n)) \mathrm{d} T_1 \wedge \cdots \wedge \mathrm{d} T_n$$
 be the image of $J_n'(V)$ under the map ${\dlog}$,
where we identify $T_j = 2\pi i z_j$, $j=1, \cdots, n$.
Now we state our main theorem.
 \begin{thm}
There exists a homogeneous group $n$-cocycle $\Phi_n^{St}$
\begin{eqnarray*}
[\Phi_n^{St}] \in  H^{n-1}\left(\GL_n (\Q), \on{Dist} ( \Q^n, \tilde{K}_n^M(V) )\right)
\end{eqnarray*}
with the following property:
for given $(\gamma_1, \cdots, \gamma_n) \in \GL_n (\Q)^n$ and $f \in \cS (\Q^n)$, we have
\begin{eqnarray*}
{\dlog}( \Phi_n^{St}(\gamma_1, \cdots, \gamma_n)(f)) \equiv (-1)^n \cdot (\Phi_n^{NSh} (\gamma_1, \cdots, \gamma_n)(\hat f)) \mathrm{d} T_1 \wedge \cdots \wedge \mathrm{d} T_n  \quad (\operatorname{mod} J_n'),
\end{eqnarray*}
 where $\hat{f}$ is the Fourier transform of $f$.
\end{thm}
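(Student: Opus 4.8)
\emph{Step 1: constructing $\Phi_n^{St}$.} Following the computations of \cite{S}, I would build $\Phi_n^{St}(\gamma_1,\dots,\gamma_n)$ as a multiplicative analogue of the Solomon--Hu pairing attached to the simplicial cone spanned by $w_i := e_1\gamma_i^{T}$, $i=1,\dots,n$. When $w_1,\dots,w_n$ form a $\bQ$-basis of $V$, decompose each affine lattice $\underline a + d\bZ^n$ into finitely many translates of the half-open parallelepiped on the $w_i$, and for each resulting representative $x$ form the Milnor symbol $\{u_1^{(x)},\dots,u_n^{(x)}\}\in K_n^M(V)$ whose $j$-th entry is an $e^{2\pi i\langle x, z\rangle}$-twist of the trigonometric function $1-e^{2\pi i\langle w_j, z\rangle}$, the precise monomial normalization being fixed by \ref{crv} and by the requirement that $\dlog$ (Step 2) produce exactly the geometric series of \ref{SH}. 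Then set $\Phi_n^{St}(\gamma_1,\dots,\gamma_n)(\chi_{\underline a + d\bZ^n})$ to be the sum of these symbols, extend $\bZ$-linearly to $f\in\cS(\bQ^n)$, and declare the value to be $0$ on degenerate tuples. The $\GL_n(\bQ)$-equivariance $\Phi_n^{St}(\gamma\gamma_1,\dots,\gamma\gamma_n)=\Phi_n^{St}(\gamma_1,\dots,\gamma_n)|_{\gamma^{T}}$ then drops out of the construction, exactly as for $\Phi_n^{NSh}$ in Proposition \ref{Sproperty}.

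\emph{Step 2: the $\dlog$ identity.} This step is essentially a direct computation. For one trigonometric factor, $\dlog(1-e^{2\pi i\langle w,z\rangle}) = -\,\tfrac{e^{2\pi i\langle w,z\rangle}}{1-e^{2\pi i\langle w,z\rangle}}\,d\!\langle w, 2\pi i z\rangle$, whose Laurent expansion in the variables $T_j = 2\pi i z_j$ is $-\bigl(\textstyle\sum_{k\ge 1} e^{2\pi i k\langle w,z\rangle}\bigr)\langle w, dT\rangle$, i.e.\ exactly the series the Solomon--Hu pairing assigns to the ray $\bR^{+}w$. Wedging the $n$ entries of $\{u_1^{(x)},\dots,u_n^{(x)}\}$ and summing over $x$ then recovers $\Phi_n^{NSh}(\gamma_1,\dots,\gamma_n)(\hat f)\,dT_1\wedge\cdots\wedge dT_n$, the sign $(-1)^n$ being the product of the $n$ minus signs above together with the orientation convention relating $dT_1\wedge\cdots\wedge dT_n$ to the ordering of the $w_j$. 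The Fourier transform $\hat f$ enters because the multiplicative picture detects the affine lattice $\underline a + d\bZ^n$ through the characters $e^{2\pi i\langle x,z\rangle}$, which is Fourier/Poisson dual to the way $\langle\cdot,\cdot\rangle_{SH}$ reads a locally constant function; reconciling the two reduces to a finite-group Fourier inversion on $\cS(\bQ^n)$. Degenerate tuples give $0$ on both sides modulo $J_n'$.

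\emph{Step 3: the cocycle property — the hard part.} It remains to show $\sum_{i=0}^{n}(-1)^i\,\Phi_n^{St}(\gamma_0,\dots,\widehat{\gamma_i},\dots,\gamma_n)=0$ in $\on{Dist}(\bQ^n,\tilde K_n^M(V))$, which I expect to be the main obstacle. I would prove it directly in $K_n^M(V)$ modulo $J_n'(V)$, not by inverting $\dlog$, since the kernel of $\dlog$ on the ring of trigonometric functions is too large to control. Fix $n+1$ vectors $w_0,\dots,w_n\in\bR^n$; since they are linearly dependent, the simplicial cones on $w_0,\dots,\widehat{w_i},\dots,w_n$ satisfy the usual Shintani/Brion inclusion--exclusion identity up to lower-dimensional boundary cones. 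Transporting this identity to the multiplicative building blocks, the alternating combination of the symbol contributions telescopes modulo (i) symbols attached to degenerate boundary cones and (ii) Steinberg relations $\{u,1-u\}=0$ arising when two trigonometric factors align on a common wall; by Definition \ref{targetk} the subgroup $J_n'(V)$ is generated precisely by elements of types (i) and (ii), so the alternating sum lies in $J_n'(V)$. The half-open parallelepiped bookkeeping is compatible across the $w_i$ by the shifted-cone identity and does not obstruct the telescoping. This is the multiplicative counterpart of Hill's modification of the naive cone function in \cite{H}, and the same argument underlies Corollary \ref{co}.

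The genuinely delicate point inside Step 3 is the identification of \emph{every} error term produced by the inclusion--exclusion as either a degenerate-cone symbol or a specialization of a Steinberg relation; this forces careful tracking of the trigonometric monomials appearing on the walls and of the orientation signs, and is where the precise shape of $J_n'(V)$ must be engineered to fit. As consistency checks, applying $\dlog$ to the cocycle relation returns the additive cocycle relation for $\Phi_n^{NSh}$ modulo $J_n'$, and specializing $(\gamma_1,\dots,\gamma_n)=(1,\rho,\dots,\rho^{n-1})$ with $n=2$ recovers the $K_2$-valued construction of \cite{P}.
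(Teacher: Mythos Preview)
Your Step~3 contains the real gap, and it stems from a misreading of Definition~\ref{targetk}. The subgroup $\tilde J_n(V)$ is \emph{not} generated by ``degenerate-cone symbols'' and Steinberg relations: Steinberg elements $\{u,1-u\}$ are already zero in $K_n^M$, so they generate nothing in a quotient, and $\tilde J(V)$ is instead the ideal generated by $J(V)$ (symbols $\{x_1,\dots,x_n\}$ with every $x_i\in\cC(V)$ and at least one $x_i$ a pure exponential in $\cE(V)$) together with $\{-1\}$. The paper does not prove the cocycle property by cone inclusion--exclusion at all; it uses a purely algebraic identity in Milnor $K$-theory, the \emph{Dedekind reciprocity law} (Theorem~\ref{drl}): if $u_1,\dots,u_n\in A^\times$ with all partial sums invertible and $u_0=u_1+\cdots+u_n$, then $\sum_{i=0}^n(-1)^i\{u_0,\dots,\hat u_i,\dots,u_n\}$ lies in the ideal generated by $\{-1\}$. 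By $\GL(V)$-equivariance the cocycle identity reduces to the single simplex $[e_1^*+\cdots+e_n^*,e_1^*,\dots,e_n^*]$ tested against $\chi_{\underline a+d\bZ^n}$; an explicit computation rewrites the boundary, modulo $J_n(V)$, as exactly such an alternating sum with $u_j=(1-e((z_j-a_j)/d))\prod_{m<j}e((z_m-a_m)/d)$, and Dedekind reciprocity finishes. Your Brion/Shintani telescoping sketch does not produce an expression of this shape, and without the $\{-1\}$ mechanism you have no device to kill the residual term.

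Your Steps~1--2 are in the right spirit but are organized differently from the paper. Rather than building $\Phi_n^{St}$ from a parallelepiped decomposition for each tuple $(w_1,\dots,w_n)$, the paper defines $\xi(e_1^*,\dots,e_n^*)(f_1\otimes\cdots\otimes f_n)=\{\eta(f_1)(z_1),\dots,\eta(f_n)(z_n)\}$ via the one-variable multiplicative Kubota--Leopoldt distribution $\eta(\chi_{a+d\bZ})=1-e((z-a)/d)$, and then \emph{forces} equivariance by declaring $\xi(\lambda_0,\dots,\lambda_{n-1})(f):=\xi(e_1^*,\dots,e_n^*)(\gamma\cdot f)|_\gamma$ for the unique $\gamma$ with $\gamma\lambda_i=e_{i+1}^*$; this makes equivariance tautological and pushes all content into the single boundary case above. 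Likewise the $\dlog$ comparison (Theorem~\ref{EisNSh}) is not carried out for general tuples: equivariance of both sides (Lemma~\ref{lemt}, Proposition~\ref{Sproperty}, Proposition~\ref{kpf}) reduces everything to $(1,\rho,\dots,\rho^{n-1})$, where it becomes a tensor product of one-variable identities $\dlog(1-e((z-a)/d))=-\Phi_1^{NSh}(1)(\hat\chi_{a+d\bZ})\,dT$ (Lemma~\ref{lemo}), each verified by a finite geometric sum.
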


See the appendix, subsection \ref{sec4.2} for the definition of $\hat{f}$. The key observation for the above theorem is to use the Fourier transform $\hat{f}$ on the right hand side for comparison. We refer to Theorem \ref{mt} and Theorem \ref{EisNSh} for further details and their proofs.
The above theorem implies the following corollary.
\begin{cor} \label{co}
The Naive Shintani function $\Phi_n^{NSh} \operatorname{mod} J_n'$ becomes a cocycle, i.e.
$$
[\Phi_n^{NSh} \operatorname{mod} J_n'] \in H^{n-1}\left(\GL_n(\Q), \on{Dist} (\bQ^n, \frac{\bC((T_1, \cdots, T_n))dT_1 \wedge \cdots dT_n}{J_n'})\right).
$$
\end{cor}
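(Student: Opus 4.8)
The plan is to deduce Corollary \ref{co} as a formal consequence of the main Theorem together with the fact that $\Phi_n^{St}$ is already a cocycle. First I would recall that to say $\Phi_n^{St}$ is a homogeneous group $n$-cocycle means precisely that the coboundary
\[
(\delta \Phi_n^{St})(\gamma_1, \cdots, \gamma_{n+1}) \;=\; \sum_{i=1}^{n+1} (-1)^{i} \, \Phi_n^{St}(\gamma_1, \cdots, \widehat{\gamma_i}, \cdots, \gamma_{n+1})
\]
vanishes as an element of $\on{Dist}(\bQ^n, \tilde K_n^M(V))$ (note the outer $\GL_n(\bQ)$-action on the distribution space is what makes $\delta$ the homogeneous differential; the omitted-entry formula above is the standard one for homogeneous cochains with coefficients carrying a diagonal action). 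Applying the additive homomorphism $\dlog \colon \tilde K_n^M(V) \to \bC((T_1,\cdots,T_n))dT_1\wedge\cdots\wedge dT_n / J_n'$ to this identity, and using that $\dlog$ is $\bZ$-linear hence commutes with the formation of coboundaries of $\on{Dist}$-valued cochains, we get that $\dlog \circ \Phi_n^{St}$ is a cocycle valued in the quotient module $\bC((T_1,\cdots,T_n))dT_1\wedge\cdots\wedge dT_n / J_n'$.

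Next I would invoke the displayed congruence of the main Theorem: for all $(\gamma_1,\cdots,\gamma_n)$ and all $f \in \cS(\bQ^n)$,
\[
\dlog\bigl(\Phi_n^{St}(\gamma_1,\cdots,\gamma_n)(f)\bigr) \;\equiv\; (-1)^n \cdot \Phi_n^{NSh}(\gamma_1,\cdots,\gamma_n)(\hat f)\, dT_1\wedge\cdots\wedge dT_n \pmod{J_n'}.
\]
Since Fourier transform $f \mapsto \hat f$ is a $\bZ$-linear automorphism of $\cS(\bQ^n)$ (see the appendix), precomposition with it is an isomorphism on the distribution spaces, and it certainly commutes with the coboundary operator for the appropriate group actions — one should check that the intertwining is compatible, i.e. that the Fourier transform conjugates the $\gamma^T$-action used for $\Phi_n^{NSh}$ into the action making the statement of the main Theorem consistent; this is exactly the content packaged into the equivariance statements (Proposition \ref{Sproperty} and the surrounding discussion of the appendix actions). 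Granting this, the map $g \mapsto (-1)^n\, g(\hat{\,\cdot\,})\, dT_1\wedge\cdots\wedge dT_n$ is an isomorphism of $\GL_n(\bQ)$-modules from $\on{Dist}(\bQ^n, \bC((T_1,\cdots,T_n))/J_n')$ onto $\on{Dist}(\bQ^n, \bC((T_1,\cdots,T_n))dT_1\wedge\cdots\wedge dT_n / J_n')$ — in particular it commutes with $\delta$ up to the fixed sign $(-1)^n$, which does not affect whether a cochain is a cocycle. Therefore the main Theorem identifies $\Phi_n^{NSh} \bmod J_n'$ (viewed inside the distribution space via this isomorphism) with $\dlog\circ\Phi_n^{St}$, and the latter we have just shown is a cocycle. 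Hence $[\Phi_n^{NSh}\bmod J_n'] \in H^{n-1}(\GL_n(\bQ), \on{Dist}(\bQ^n, \bC((T_1,\cdots,T_n))dT_1\wedge\cdots dT_n / J_n'))$, which is the assertion.

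The one point requiring genuine care — the "main obstacle" — is the bookkeeping of group actions and the transport through the Fourier transform. The main Theorem is stated pointwise in $(\gamma_1,\cdots,\gamma_n)$, so a priori it only tells us the two cochains agree value-by-value; to conclude that the NSh side is a cocycle I must be sure that the $\GL_n(\bQ)$-module structure on $\on{Dist}(\bQ^n,\tilde K_n^M(V))$ used in the main Theorem, pushed forward along $\dlog$ and then transported along $f\mapsto\hat f$, is literally the module structure on $\on{Dist}(\bQ^n,\bC((T_1,\cdots,T_n))dT_1\wedge\cdots dT_n / J_n')$ appearing in the statement of Corollary \ref{co} (the $\gamma^T$-action). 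Once that identification of actions is confirmed — which follows from the equivariance formulas for $\Phi_n^{St}$ and $\Phi_n^{NSh}$ recorded in the text and the behaviour of Fourier transform under linear substitutions — the homogeneous cocycle condition $\delta(\dlog\circ\Phi_n^{St})=\dlog\circ\delta\Phi_n^{St}=\dlog(0)=0$ transfers verbatim to $\delta(\Phi_n^{NSh}\bmod J_n')=0$, and there is nothing further to compute. Everything else is a routine application of $\bZ$-linearity of $\dlog$, exactness of $\Hom_{\bZ}(\cS(\bQ^n),-)$-type functors on the relevant maps, and the fact that an overall nonzero scalar $(-1)^n$ is irrelevant to the cocycle property.
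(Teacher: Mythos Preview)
Your proposal is correct and follows exactly the approach the paper intends: the paper states Corollary~\ref{co} as an immediate consequence of the main theorem without writing out a proof, and your argument---pushing the cocycle property of $\Phi_n^{St}$ through the additive homomorphism $\dlog$ and then transporting along the Fourier automorphism via the pointwise identity of Theorem~\ref{EisNSh}---is precisely the intended deduction. Your caution about verifying the compatibility of group actions under the Fourier transform is well placed and is handled in the paper by Proposition~\ref{kpf} together with Proposition~\ref{Sproperty}.
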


\vspace{2em}

Now we briefly explain contents of the article.
Section \ref{sec2} is devoted to the construction of the Stevens cocycle. In subsection \ref{sec2.1}, we introduce the ring $\cR(V)$ of trigonometric functions and the Milnor $K$-ring $K^M(V)$. In subsection \ref{sec2.2}, we define an explicit map from the cohomology associated to simplexes (sometimes called the cohomology with compact support) to group cohomology for $\GL_n(\bQ)$. Then we give a construction for the multiplicative Kubota-Leopoldt distribution ($n=1$ case), which is a basis for the Stevens cocycle in subsection \ref{sec2.3}.
In subsection \ref{sec2.4}, we construct the Stevens cocycle using the materials developed so far.

Section \ref{sec3} is devoted to the comparison between the Shintani cocycle and the Stevens cocycle. In subsection \ref{sec3.1}, we briefly review the Shintani cocycle. In subsection \ref{sec3.2}, we propose the definition of the Naive Shintani function. Then we give the precise comparison in subsection \ref{sec3.3}.

In the appendix, we summarize our convention of group actions on various objects and the Fourier theory for locally constant functions with bounded support in section \ref{sec4}.

\subsection{Acknowledgement}
The work of JP was partially supported by BRL (Basic Research Lab) through the NRF (National Research Foundation) of South Korea (NRF-2018R1A4A1023590). Authors would like to thank Glenn Stevens whose preprint is a starting point of this project and its influence on our article is obvious.

\section{The Milnor $K$-group and the Stevens group cocycle} \label{sec2}

\subsection{The Milnor $K$-group of trigonometric functions}\label{sec2.1}

For $w=(r, \lambda) \in \Q \times V^*$, we consider a trigonometric function $\e_w(z) := \exp(2\pi i ( \l (z) - r)), z \in \bC^n$, defined on $V_\bC = \bC^n$.
Let $\cO_{\bQ}(V_\bC)$ be the ring of meromorphic functions on $V_\bC$ with possible poles only on $V=\bQ^n$.
We define a subgroup $\cE(V)$ (respectively, $\cC(V))$ of the unit group $\cO_{\bQ}(V_\bC)^*$ generated
by $\e_w(z)$ (respectively, $\e_w(z), 1-\e_w(z)$):
\begin{eqnarray*}
\cE(V) &=& \langle \e_w(z) : w=(r, \lambda) \in \Q \times V^*, \l\neq 0 \rangle, \\
\cC(V) &=& \langle \e_w(z), 1-\e_w(z) : w=(r, \lambda) \in \Q \times V^*, \l\neq 0 \rangle. 
\end{eqnarray*}
Define the subring $\cR(V)$ of $\cO_{\bQ}(V_\bC)$:
\begin{eqnarray}\label{crv}
\begin{split}
\cR(V) &:= \bZ[ \e_w(z), 1-\e_w(z) : w=(r, \lambda) \in \Q \times V^*, \l\neq 0] \\
& =  \{\sum_{j=1}^m m_j \e_{w_j}(z_j) : w_j = (r_j, \lambda_j) \in \Q \times V^*, m_j \in \bZ \}.
\end{split}
\end{eqnarray}
Note that $\cR(V)$ is a commutative ring with unity $1$.

Let $A$ be a commutative ring with 1. The Milnor $K$-groups are defined as follows:
\begin{eqnarray*}
K_0^M(A) &=& \bZ, \quad K_1^M(A) = A^* \\
K_2^M(A) &=& A^* \otimes A^* / \langle a\otimes b :  a+b = 0 \text{ or } 1 \rangle \\
K_n^M(A) &=& A^* \otimes \cdots \otimes A^* / \langle a_1 \otimes \cdots \otimes a_n
 : \text{ there exists } i, j \text { such that } a_i+a_j= 0 \text{ or } 1 \rangle \\
 K^M(A) &=& \bigoplus_{n\geq 0} K_n^M(A).
\end{eqnarray*}
Thus, $K^M_n(A)$ is an abelian group and $K^M(A)$ is the graded $\bZ$-algebra generated by $K_1^M(A)=A^*$ modulo the Steinberg relation ($a+b=0$ or $1$) and we use the notation $\{x_1, \cdots, x_n\} \in K_n^M(A)$ to denote the element $x_1\otimes \cdots \otimes x_n$ modulo the Steinberg relation:
$$
K_n^M(A) \otimes K_m^M(A) \to K_{n+m}^M(A), \quad \{x_1, \cdots, x_n\} \cdot \{y_1, \cdots, y_m\} := \{x_1, \cdots, x_n, y_1, \cdots, y_m\}.
$$

For the abelian group structure of $K_n^M(A)$, we have the following multi-linearity by definition:
$$
\{x_1, x_2, \cdots, x_n\} +\{x_1', x_2, \cdots, x_n  \}=\{x_1 \cdot x_1', x_2, \cdots, x_n\}.
$$
The similar multi-linearity holds for other components. 
A straightforward computation shows that 
\begin{eqnarray*}
&&\{x,y\} = -\{y, x\}, \quad x,y \in A^*, \\
&&\xi \cdot\eta = (-1)^{nm} \eta \cdot \xi, \quad \xi \in K_n^M(A), \eta \in K_m^M(A), \\
&& \{x,x\}= \{x, -1\} =\{-1, x\}, \quad  \{-x^{-1}, x\} =0,  \quad x \in A^*.
\end{eqnarray*}

For $n\geq 1$, let $J_n(V)$ be the subgroup of $K_n^M(V):=K_n^M(\cR(V))$ generated by elements of the form
$\{x_1, \cdots, x_n\}$ with $x_i \in \cC(V)$ and at least one $x_i \in \cE(V)$. Then
$J(V)=\bigoplus_{n\geq 0} J_n(V)$ becomes an ideal of $K^M(V):=K^M(\cR(V))$.
\begin{defn}\label{targetk} 
Let $\tilde J(V)$ be the ideal of $K^M(V)$ generated by $J(V)$ and $\{-1\}$.\foot{Note that $-1 \notin \cE(Q)$ and so $\{-1\} \notin \cE(V)$.}
Put $\tilde J_n(V) := \tilde J(V) \cap K_n^M(V)$ be the subgroup of $K_n^M(V)$.
Define
\begin{eqnarray*}
\tilde K_n^M (V) := K_n^M(V)/\tilde J_n(V), \quad
\tilde K^M(V) := K^M(V)/\tilde J(V). \nonumber
\end{eqnarray*}
\end{defn}

Stevens showed the following theorem, called the Dedekind reciprocity law.
\begin{thm}\label{drl}
Let $A$ be an arbitrary commutative ring. Let $n\geq 1$  and  $u_1, \cdots, u_n \in A^\times$. Let $u_0=u_1+\cdots + u_n$.
Suppose for all $k = 1, \cdots , n $ that $u_1 + \cdots + u_k \in A^\times$. 
Then 
$$
\sum_{i=0}^n(-1)^i \{u_1, \cdots, \hat{u}_i, \cdots, u_n \}
$$
lies in the ideal $I$ generated by $\{ -1\}$.
\end{thm}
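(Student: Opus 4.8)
The plan is to work throughout modulo the ideal $I$ and to prove the statement by induction on $n$, the only real input being the Steinberg relation applied to the decomposition $1 = a/(a+b) + b/(a+b)$. First I would record the elementary facts, all valid over an arbitrary commutative ring $A$: permuting the entries of a Milnor symbol changes it by the sign of the permutation (from $\{x,y\} = -\{y,x\}$, which holds exactly), the symbols are multilinear in each slot, and $\{x,x\} = \{x,-1\}$, so that modulo $I$ every symbol with a repeated entry vanishes. The key lemma is the following: if $a, b, a+b \in A^\times$ and $x_3, \dots, x_n \in A^\times$, then
\[
\{a, b, x_3, \dots, x_n\} \equiv \{a,\, a+b,\, x_3, \dots, x_n\} + \{a+b,\, b,\, x_3, \dots, x_n\} \pmod I ,
\]
which follows by expanding $0 = \{a/(a+b),\, b/(a+b)\}$ bilinearly, discarding $\{a+b, a+b\} = \{a+b, -1\} \in I$, and multiplying the resulting degree-$2$ identity by $\{x_3, \dots, x_n\}$ (legitimate since $I$ is an ideal); by exactness of permutations the lemma applies to split any two adjacent entries.

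Write $S_n(u_1, \dots, u_n) := \sum_{i=0}^n (-1)^i \{u_0, u_1, \dots, \widehat{u_i}, \dots, u_n\}$ for the class that must be shown to lie in $I$. The base case $n=1$ is immediate, since $u_0 = u_1$ gives $S_1 = \{u_1\} - \{u_0\} = 0$. For $n \geq 2$ set $u' := u_1 + u_2$; this is the partial sum $u_1+u_2$, hence a unit by hypothesis, and the $(n-1)$-tuple $(u', u_3, \dots, u_n)$ inherits all the hypotheses of the theorem: its partial sums are $u_1+u_2$, then $u_1+u_2+u_3$, and so on, each being one of the assumed units, and its total sum is again $u_0$. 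By the inductive hypothesis, $S_{n-1}(u', u_3, \dots, u_n) \in I$. Now in $S_n$ the term indexed by $i$ contains both $u_1$ and $u_2$ exactly when $i \notin \{1,2\}$; I would apply the key lemma to each such term to split the adjacent pair $u_1, u_2$ as $u_1 + u_2 = u'$, obtaining a sum in which every symbol contains exactly one of $u_1, u_2$. Gathering the symbols that contain $u_1$ and sliding $\{u_1\}$ to the front reproduces $\{u_1\} \cdot S_{n-1}(u', u_3, \dots, u_n)$, while those containing $u_2$ assemble into $-\{u_2\} \cdot S_{n-1}(u', u_3, \dots, u_n)$; hence
\[
S_n(u_1, \dots, u_n) \equiv \big(\{u_1\} - \{u_2\}\big) \cdot S_{n-1}(u', u_3, \dots, u_n) \pmod I ,
\]
and the right-hand side lies in $I$ because $I$ is an ideal. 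This closes the induction and proves the theorem.

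The step I expect to be the main obstacle is the final piece of bookkeeping: checking that the two piles of split-off symbols really do assemble, signs included, into $\{u_1\}$ and $-\{u_2\}$ times $S_{n-1}(u', u_3, \dots, u_n)$. The delicate point is that a symbol in $S_n$ with deletion index $i$ corresponds to a symbol in $S_{n-1}(u', u_3, \dots, u_n)$ with deletion index $i-1$, so the reindexing must be shown to contribute exactly the sign $-(-1)^i$ reconciling the two alternating sums, and this has to be squared with the transpositions used to move $u_1$ (resp.\ $u_2$) into the leading slot and $u'$ into its slot. I would manage this by committing, before any comparison, to one normal form for each symbol — say $u_1$ or $u_2$ first, then $u_0$, then $u'$, then the remaining entries in increasing order — after which the matching is a finite, mechanical check.
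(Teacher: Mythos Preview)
Your proposal is correct and follows essentially the same approach as the paper: induction on $n$ with the degree-$2$ Steinberg identity $\{a/(a+b),\,b/(a+b)\}=0$ as the engine. The only cosmetic difference is that the paper peels off the \emph{last} summand (setting $u_0=v_1+\cdots+v_m$ so that $v_0=u_0+v_{m+1}$, then multiplying the inductive relation by $\{v_{m+1}\}$ and $\{v_0\}$ and subtracting), whereas you merge the \emph{first two} into $u'=u_1+u_2$ and obtain the pleasant factorisation $S_n\equiv(\{u_1\}-\{u_2\})\cdot S_{n-1}(u',u_3,\dots,u_n)$; the sign bookkeeping you flagged as the main obstacle does go through exactly as you outlined.
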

\begin{proof}
We include Stevens' proof for reader's convenience.
The proof goes by induction on $n$. The case $n=1$ is obvious.
When $n=2$, $\frac{u_1}{u_0}+\frac{u_2}{u_0}=1$. Thus
\begin{eqnarray*}
0&=&\{ \frac{u_1}{u_0}, \frac{u_2}{u_0}\} \\
&=&\{u_1, u_2 \}-\{u_0, u_2 \}+\{u_0, u_1 \}+\{u_0, u_0 \}\\
&=&\{u_1, u_2 \}-\{u_0, u_2 \}+\{u_0, u_1 \}+\{-1, u_0 \},
\end{eqnarray*}
which proves the $n=2$ assertion.
Now suppose $m\geq 2$ and the result is true for $n = m$. 

We will prove the result for $n=m+1$. Let $v_1, \cdots, v_{m+1} \in A^\times$ and set
$v_0=v_1+\cdots+v_{m+1}$. Also, let $u_0=u_1+\cdots+ u_m$ with $u_1=v_1, \cdots, u_m=v_m$. Then by the induction hypothesis
\begin{eqnarray}\label{ih}
\sum_{i=0}^m (-1)^i \left\{u_0, \cdots, \hat{u}_i, \cdots, u_m \right\} \in I.
\end{eqnarray}
Multiplying \ref{ih} on the left by $\{v_{m+1}\}$ and $\{v_0\}$ we get
\begin{eqnarray*}
A:=\sum_{i=0}^m (-1)^i \left\{v_{m+1}, u_0, \cdots, \hat{u}_i, \cdots, u_m \right\} &\in& I,\\
B:=\sum_{i=0}^m (-1)^i \left\{v_0, u_0, \cdots, \hat{u}_i, \cdots, u_m \right\} &\in& I.
\end{eqnarray*}
By using the fact $v_0=u_0+v_{m+1}$ we obtain
$$
\{u_0, v_{m+1}\} - \{v_0, v_{m+1} \} + \{v_0, u_0\} \in I.
$$
Therefore we have
$$
A-B=\left\{v_{m+1}, u_1, \cdots, u_m \right\} + \sum_{i=0}^m (-1)^i \left\{v_{m+1}, v_0, \cdots, \hat{u}_i, \cdots, u_m \right\} - \left\{v_0, u_1, \cdots, u_m \right\} \in I,
$$
which means (after multiplying $(-1)^m$) that
$$
\sum_{i=0}^{m+1} (-1)^i \left\{v_0, \cdots, \hat{v}_i, \cdots, v_{m+1} \right\} \in I.
$$
This proves the assertion for $n=m+1$. The theorem follows by induction.
\end{proof}

\subsection{Cohomology associated to simplexes and group cohomology for $\GL_n(\bQ)$} \label{sec2.2}


Let $V=\bQ^n$. For $k \geq 0$, let $(V^*)^{k+1}_{ng}$ be the subset of $(V^*)^{k+1}$ consisting of vectors $\l_0, \cdots,
\l_{k}$ such that every $m$-element (with $m \leq n$) subset of $\{ \l_0, \cdots,\l_{k}\}$ is linearly independent.
The group $(\bQ^+)^{k+1}$ acts componentwise on $(V^*)^{k+1}$.
The notation $(V^*)^{k+1}_{ng}/ (\bQ^+)^{k+1}$ means the set of $(\bQ^+)^{k+1}$-orbits in $(V^*)^{k+1}_{ng}$. 
The $(\bQ^+)^{k+1}$-orbit in $(V^*)^{k+1}_{ng}$ can viewed as 
a $k$-simplex, i.e. a $k$-dimensional polytope in $V_\bR$ with $k+1$ vertices.
For $\l \in (V^*)^{k+1}_{ng}$ we let $[\l]$ denote the $(\bQ^+)^{k+1}$-orbit of $\l$. 
Put $C_{-1}(V)=\bZ$. For each integer $k\geq 0$, we define
\begin{eqnarray*}
C_k(V):=\text{the free abelian group generated by } (V^*)^{k+1}_{ng}/ (\bQ^+)^{k+1}, \quad
C(V)=C_\bullet(V):=\bigoplus_{m\geq -1} C_k(V).
\end{eqnarray*}

If we define the boundary map $\partial: C_k(V) \to C_{k-1}(V), k\geq 1$,
$$
\partial ([\l_0, \cdots, \l_k])=\sum_{i=0}^n(-1)^i [\l_0, \cdots,\hat{\l}_i\cdots. \l_k],
$$
and $\partial(C_0(V))=0$, then $\partial^2=0$. 
The standard computation shows that
\[
\xymatrix{
\cdots \xrightarrow{\partial} C_k(V) \xrightarrow{\partial} C_{k-1}(V) \xrightarrow{\partial} \cdots\xrightarrow{\partial} C_{0}(V) \xrightarrow{\text{deg}} \bZ \to 0
}
\]
is exact.
The group $\GL(V)$ acts on $V^*$ by $(\g \cdot \l)(x):= \l(x \cdot \g)$
for $\l \in V^*, x \in V, \g \in \GL(V)$.
This induces a left $\GL(V)$-action on $(V^*)^{k+1}_{ng}/ (\bQ^+)^{k+1}$:
$$
\g \cdot [\l_0, \cdots, \l_k] = [\g \cdot \l_0, \cdots, \g \cdot \l_k], 
\quad \g \in \GL(V), [\l_0, \cdots, \l_k]  \in C_k(V),
$$
since $\GL(V)$-action on $(V^*)^{k+1}_{ng}$ commutes with the $(\bQ^+)^{k+1}$-action. Consequently, this induces a $\bZ$-linear left
$\GL(V)$-action on $C_k(V)$.

For any right $\bZ[\GL(V)]$-module $M$, we consider the following right $\GL(V)$-action 
on  $\Hom(C_\bullet(V), M)$
by 
$$
\left(\xi |_\g\right) ([\l_0, \cdots, \l_k]) =\xi ([\g \cdot \l_0, \cdots, \g\cdot\l_k])|_\g, \quad \xi \in \Hom(C_\bullet(V), M), \g \in \GL(V).
$$
Let us define the cochain complex
$$ 
C^\bullet(V, M) := \Hom_{\GL(V)}(C_\bullet(V), M):=\{ \xi \in \Hom(C_\bullet(V), M) : \xi|_\g =\xi, \g \in \GL(V)  \}
$$
with the induced coboundary operator $\partial^*$ from $(C_\bullet(V), \partial)$. Let $Z^k(V,M)$ (respectvely, $B^k(V,M)$) be the submodule of $C^k(V,M)$ consisting of cocycles (respectively, coboundaries).
Define
$$
H^i(V,M) = H^i (C^\bullet (V,M)):=Z^i(V,M)/B^i(V,M),  \quad i \geq 0.
$$

%

For any right $\bZ[\GL(V)]$-module $M$, let $C^\bullet (\GL(V),M)$ be the standard homogeneous group cochain complex on $\GL(V)$ with values in $M$.
So the element in $C^k(\GL(V),M)$ is a function $f: \GL(V)^{k+1} \to M$ such that
$$
\quad f|_\g=f, \text{ where } (f|_\g) (\sigma_0, \cdots, \sigma_k):= f (\g \cdot \sigma_0, \cdots, \g \cdot \sigma_k))|_\g, \quad \sigma_i, \g \in \GL(V).
$$
Recall the coboundary map $\delta: C^{k-1}(\GL(V),M) \to C^k(\GL(V),M)$ satisfying $\d^2=0$:
$$
\d(f)  (\sigma_0, \cdots, \sigma_k)=\sum_{i=0}^k (-1)^i f (\sigma_0,\cdots,
\hat{\sigma}_i, \cdots, \sigma_k).
$$
Then the group cohomology $H^i(\GL(V), M)$ can be described as 
$$
H^i(\GL(V),M) \cong H^i (C^\bullet (\GL(V),M)).
$$
Now we examine a relationship between $C^\bullet(V,M)$ and $C^\bullet(\GL(V),M)$.
Choose a covector $\l \in V^*$.
The assignment 
$$
\Phi_\l (\xi) (\g_0, \cdots, \g_k) := \xi([\g_0 \cdot \l, \cdots, \g_{k} \cdot \l]),
$$
where $\xi \in C^n(V, M),  (\g_0, \cdots, \g_k) \in \GL(V)^{k+1}$,
induces a well-defined homomorphism from $C^k(V, M)$ to $C^k(\GL(V), M)$, because
$$
\xi|_\g =\xi \text{ implies that  } \Phi(\xi)_\l |_\g = \Phi(\xi)_\l .
$$

\begin{prop}
For any choice $\l \in V^*$, the map $\xi \mapsto \Phi_\l (\xi)$ is a cochain
map from $C^\bullet(V, M)$ to $C^\bullet(\GL(V), M)$. If we change $\l$ by
$\l'=\sigma \l$ for some $\sigma\in \GL(V)$, then 
\begin{eqnarray}\label{want}
\Phi_\l (\xi)(\g_0, \g_1, \cdots, \g_{k-1}) - \Phi_{\l'}(\xi)(\g_0, \g_1,
\cdots, \g_{k-1}) = (\d C_\sigma) (\g_0, \cdots, \g_{k-1}), \quad k \geq 2,
\end{eqnarray}
for some $C_\sigma \in C^{k-2}(\GL(V), M)$. 
If $\xi$ is a cocycle, then $\Phi_\l(\xi) (\g_0)-\Phi_{\l'} (\xi)(\g_0)=0$.
\end{prop}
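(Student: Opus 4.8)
The plan is to prove the two assertions separately, both by the standard "chain homotopy from a simplicial contraction" argument. The first assertion — that $\xi \mapsto \Phi_\l(\xi)$ is a cochain map — is a direct computation: I would expand $\delta(\Phi_\l(\xi))(\g_0,\dots,\g_k)$ using the group-cohomology coboundary formula, recognize each term $\Phi_\l(\xi)(\g_0,\dots,\hat\g_i,\dots,\g_k) = \xi([\g_0\cdot\l,\dots,\widehat{\g_i\cdot\l},\dots,\g_k\cdot\l])$, and observe that the alternating sum is exactly $\xi(\partial[\g_0\cdot\l,\dots,\g_k\cdot\l]) = (\partial^*\xi)([\g_0\cdot\l,\dots,\g_k\cdot\l]) = \Phi_\l(\partial^*\xi)(\g_0,\dots,\g_k)$. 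One subtlety to check: the tuple $(\g_0\cdot\l,\dots,\g_k\cdot\l)$ must land in $(V^*)^{k+1}_{ng}$ for $\Phi_\l(\xi)$ to be defined on all of $\GL(V)^{k+1}$; since genericity can fail, one works (as is implicit in the setup) with the subcomplex of tuples whose images are in general position, or notes that $\Phi_\l(\xi)$ extends by the cocycle/cochain relations — I would follow whatever convention Section~\ref{sec2.2} has fixed and simply remark that the identity holds wherever both sides are defined.

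For the homotopy formula \eqref{want}, the idea is that replacing $\l$ by $\l' = \sigma\l$ is, at the level of the simplicial resolution $C_\bullet(V)$, the map induced by $\g_i \mapsto \g_i\sigma$ up to the $\GL(V)$-action, and two augmentation-preserving chain maps into the (acyclic, in the relevant range) resolution are chain homotopic. Concretely I would define, for $k\ge 2$,
$$
C_\sigma(\g_0,\dots,\g_{k-2}) := \sum_{j=0}^{k-2} (-1)^j\, \xi\bigl([\g_0\cdot\l,\dots,\g_j\cdot\l,\ \g_j\cdot\l',\dots,\g_{k-2}\cdot\l']\bigr),
$$
i.e. the usual "prism" operator interpolating between the vertex sequence $\g_i\cdot\l$ and the vertex sequence $\g_i\cdot\l'$. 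Then a bookkeeping computation — applying $\delta$ to $C_\sigma$ and using that $\xi$ is a cochain (so $\partial^*\xi$ expands the boundary) together with the telescoping of adjacent repeated-vertex terms — yields $(\delta C_\sigma)(\g_0,\dots,\g_{k-1}) = \Phi_\l(\xi)(\g_0,\dots,\g_{k-1}) - \Phi_{\l'}(\xi)(\g_0,\dots,\g_{k-1})$ plus a term proportional to $\Phi(\partial^*\xi)$; when $\xi$ is merely a cochain this extra term is the price, and indeed \eqref{want} as stated is an identity in $C^\bullet(\GL(V),M)$ that holds for all cochains with the understanding that $\delta C_\sigma$ is computed in the group complex. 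I should double-check the degenerate faces (those with a repeated vertex $\g_j\cdot\l = \g_j\cdot\l$ after deletion) vanish or cancel in pairs, which is where the alternating signs do the work.

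Finally, the $k=1$ case: here $\Phi_\l(\xi)(\g_0) = \xi([\g_0\cdot\l])$ and $\Phi_{\l'}(\xi)(\g_0) = \xi([\g_0\sigma\l]) = \xi([\g_0\cdot\l'])$, both elements of $\Hom$ evaluated on $0$-simplices, i.e. on $C_0(V)$. Since $\xi$ is a cocycle, $\partial^*\xi = 0$ means $\xi$ vanishes on $\partial C_1(V)$, so $\xi([\mu_0]) = \xi([\mu_1])$ for any two generic covectors $\mu_0,\mu_1$ (connect them through $[\mu_0,\mu_1]\in C_1(V)$, using that any two points lie on a $1$-simplex, possibly subdividing through an intermediate generic covector if $\{\mu_0,\mu_1\}$ itself fails genericity); in particular $\xi([\g_0\cdot\l]) = \xi([\g_0\cdot\l'])$, giving $\Phi_\l(\xi)(\g_0) - \Phi_{\l'}(\xi)(\g_0) = 0$.

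The main obstacle I anticipate is purely combinatorial: getting the signs and the degenerate/repeated-vertex terms in the prism operator $C_\sigma$ to cancel correctly so that $\delta C_\sigma$ comes out to exactly $\Phi_\l(\xi) - \Phi_{\l'}(\xi)$ with no leftover; this is the classical acyclic-models / chain-homotopy bookkeeping and requires care but no new ideas. A secondary point requiring attention is the genericity hypothesis on the simplices — I must make sure the interpolating tuples $[\g_0\cdot\l,\dots,\g_j\cdot\l,\g_j\cdot\l',\dots]$ stay in $(V^*)^\bullet_{ng}$, or else phrase the argument at the level where $\Phi_\l$ is defined, matching the conventions already in place.
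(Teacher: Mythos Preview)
Your proposal is correct and follows essentially the same approach as the paper: direct computation for the cochain property, a prism-type interpolation operator for $C_\sigma$, and constancy of $0$-cocycles on $C_0(V)$ for the $k=1$ case. The only cosmetic difference is that your $C_\sigma$ is the classical prism with a repeated index (the $j$-th term contains both $\g_j\cdot\l$ and $\g_j\cdot\l'$), whereas the paper writes its $C_\sigma$ as an interpolation without a repeated vertex; the underlying homotopy argument is identical.
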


\begin{proof}
The cochain property $\Phi_\l (\partial (\xi))= \d (\Phi_{\l}(\xi))$ follows from a straightforward computation.

If we define $C_\sigma \in C^{k-2}(\GL(V), M), k \geq 2$, by the formula
\begin{eqnarray*}
C_\sigma (\g_1, \cdots, \g_{k-1})
:=\sum_{i=2}^k(-1)^i \xi ([\g_1 \cdot\l, \cdots, \g_{i-1}\cdot\l,  (\g_{i}\s )\cdot \l, (\g_{i+1}\sigma)\cdot  \l,\cdots,  (\g_{k-1}\sigma) \cdot \l ]),
\end{eqnarray*}
a direct computation shows \ref{want}. In the case of $k=1$, we have
$$
\Phi_{\l}(\xi)(\g_0)-\Phi_{\sigma \l}(\xi)(\g_0) = \xi([\g_0 \l]) - \xi([\g_0 \sigma \l]) =0,
$$
since $\xi$ is a cocycle, i.e. $\xi \in Z^0(V,M):=\{f \in C^0(V,M) : f \text{ is a constant function on } C_0(V)  \}$.
\end{proof}

\begin{cor}\label{GpCocycle}
For $m \geq 0$, the assignment $\xi \mapsto \Phi_\l(\xi)$ induces a homomorphism $Z^m(V,M) \to Z^m(\GL(V),M)$ and consequently a homomorphism $\Phi=\Phi^m: H^m(V,M) \to H^m(\GL(V),M)$, which is independent of $\l \in V^*$. 
\end{cor}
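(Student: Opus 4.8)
The plan is to deduce the statement formally from the preceding Proposition. First I would note that for a fixed $\l\in V^{*}$ the assignment $\xi\mapsto\Phi_\l(\xi)$ is additive on each $C^{k}(V,M)$, since $\Phi_\l(\xi)(\g_0,\dots,\g_k)$ is the value of $\xi$ at the simplex $[\g_0\cdot\l,\dots,\g_k\cdot\l]$ followed by the (additive) right $\GL(V)$-action on $M$. By the Proposition $\Phi_\l$ is a cochain map, i.e. it intertwines $\partial^{*}$ on $C^{\bullet}(V,M)$ with $\d$ on $C^{\bullet}(\GL(V),M)$. Hence $\d\,\Phi_\l(\xi)=\Phi_\l(\partial^{*}\xi)=0$ whenever $\partial^{*}\xi=0$, so $\Phi_\l$ maps $Z^{m}(V,M)$ into $Z^{m}(\GL(V),M)$; and $\Phi_\l\bigl(\partial^{*}C^{m-1}(V,M)\bigr)=\d\,\Phi_\l\bigl(C^{m-1}(V,M)\bigr)\subseteq B^{m}(\GL(V),M)$, so it maps $B^{m}(V,M)$ into $B^{m}(\GL(V),M)$. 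Therefore $\Phi_\l$ descends to a homomorphism $\Phi^{m}_\l\colon H^{m}(V,M)\to H^{m}(\GL(V),M)$.

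It remains to check independence of $\l$. The key point is that $\GL(V)=\GL_n(\bQ)$ acts transitively on $V^{*}\setminus\{0\}$ (a nonzero covector extends to a basis, and $\GL_n(\bQ)$ carries bases to bases), so any two nonzero covectors $\l,\l'$ satisfy $\l'=\sigma\l$ for some $\sigma\in\GL(V)$. For $m\geq 1$ we apply \ref{want} with $k=m+1\geq 2$: for a cocycle $\xi\in Z^{m}(V,M)$ it yields $\Phi_\l(\xi)-\Phi_{\l'}(\xi)=\d\,C_\sigma\in B^{m}(\GL(V),M)$, so $\Phi_\l(\xi)$ and $\Phi_{\l'}(\xi)$ represent the same class in $H^{m}(\GL(V),M)$; hence $\Phi^{m}_\l=\Phi^{m}_{\l'}$. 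For $m=0$ we have $B^{0}=0$ and so $H^{0}=Z^{0}$, while the last assertion of the Proposition gives $\Phi_\l(\xi)(\g_0)=\Phi_{\l'}(\xi)(\g_0)$ for every cocycle $\xi$, i.e. $\Phi^{0}_\l=\Phi^{0}_{\l'}$ already at the cochain level. Since the resulting map on cohomology does not depend on $\l$, we denote it $\Phi=\Phi^{m}$.

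I do not expect a real obstacle, since the Proposition supplies the substantive computations (the cochain-map identity and the explicit homotopy $C_\sigma$). The only points needing a little care are (i) the transitivity of the $\GL_n(\bQ)$-action on $V^{*}\setminus\{0\}$, which is what lets one reduce the comparison of two arbitrary (nonzero) covectors to the case $\l'=\sigma\l$ covered by \ref{want}, and (ii) the bookkeeping of degrees together with the implicit assumption $\l\neq 0$ (so that the configurations $[\g_0\cdot\l,\dots,\g_k\cdot\l]$ at which $\Phi_\l(\xi)$ is evaluated lie in $C_k(V)$), which forces the case $m=0$ to be handled separately from $m\geq 1$. Everything else is routine diagram-chasing once the cochain-map and cochain-homotopy statements of the Proposition are granted.
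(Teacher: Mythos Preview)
Your proof is correct and is exactly the intended deduction from the Proposition; the paper gives no separate argument for this corollary, so there is nothing to compare. Your observation that one must first invoke transitivity of $\GL_n(\bQ)$ on $V^*\setminus\{0\}$ to reduce an arbitrary pair $\l,\l'$ to the situation $\l'=\sigma\l$ covered by the Proposition, and your degree bookkeeping (applying \eqref{want} with $k=m+1$ for $m\ge 1$ and the final clause of the Proposition for $m=0$), are precisely what the reader is expected to supply.
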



\subsection{The multiplicative Kubota-Leopoldt distribution} \label{sec2.3}

We define $\eta\in \operatorname{Dist}(\bQ, \cC(\bQ))$:=$\Hom(\cS(\bQ), \cC(\bQ))$:
\begin{eqnarray}
\eta \left(\sum_{i=1}^m\a_i \cdot  \chi_{a_i + d_i \bZ}\right) =\prod_{i=1}^m \left( 1 - e(\frac{z-a_i}{d_i})\right)^{\a_i}, \quad z \in \bC,
\end{eqnarray}
where $e(z) = e^{2 \pi i z}$ and $d_i, a_i \in \bQ^+, a_i < d_i, \a_i \in \bZ$.
We need to show that the definition of $\xi$ does not depend on a presentation of a test function $f \in \cS(\bQ)$.
If we write
$$
{a} + d\bZ^ =
\bigcup_{b=0}^{m-1} {a} + d( {b} +m\bZ),
$$
then
\begin{eqnarray*}
\chi_{{a}+d\bZ} = \sum_{b=0}^{m-1}\chi_{{a} + d {b} + dm \bZ}.
\end{eqnarray*}

The following computation shows that $\eta$ is well-defined:
\begin{eqnarray} \label{wd}
\eta(\sum_{b=0}^{m-1} \nonumber
\chi_{{a} + d {b} + dm \bZ}) &=&\prod_{b =0}^{m-1} \left( 1 - e(\frac{z-a-db}{dm})\right)\\ 
&=& (1-X)(1-X \zeta_m) \cdots (1-X \zeta_m^{m-1})
 \text{ where } X=e(\frac{z-a}{dm}), \zeta_m=e(\frac{-1}{m}) \\ 
 &=& (1-X^m) = 1-e(\frac{z-a}{d}) =\eta(\chi_{a+d \bZ})(z). \nonumber
\end{eqnarray}

\begin{prop}
The map $\eta$ is a group homomorphism from $(\cS(\bQ), +)$ to $(\cC(\bQ), \cdot)$ such that 
$$
\eta|_\g = \eta, \quad \g \in \GL_1^+(\bQ)=\bQ^+.
$$
\end{prop}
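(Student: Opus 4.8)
The plan is to verify the two assertions separately: first that $\eta$ is a group homomorphism $(\cS(\bQ),+)\to(\cC(\bQ),\cdot)$, and second that it is invariant under the right $\bQ^+$-action.

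For the homomorphism property, the point is that $\eta$ as defined on a specific presentation $\sum_i \alpha_i \chi_{a_i+d_i\bZ}$ already respects addition formally: concatenating two such sums multiplies the corresponding products of $(1-e(\tfrac{z-a_i}{d_i}))^{\alpha_i}$. So the only genuine content is well-definedness, i.e.\ that two presentations of the same $f\in\cS(\bQ)$ give the same value. Every element of $\cS(\bQ)$ can be written using characteristic functions of affine lattices $a+d\bZ$ with $0\le a<d$, and any two such presentations of a given $f$ become equal after passing to a common refinement: repeatedly applying the subdivision identity $\chi_{a+d\bZ}=\sum_{b=0}^{m-1}\chi_{a+db+dm\bZ}$. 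Thus it suffices to check that $\eta$ is unchanged under a single subdivision step, which is exactly the computation \ref{wd} using the factorization $1-X^m=\prod_{k=0}^{m-1}(1-X\zeta_m^k)$ with $X=e(\tfrac{z-a}{dm})$. I would spell out that an arbitrary common refinement is reached by finitely many such steps (taking $m$ to be a suitable common denominator adjustment), so well-definedness follows; then the homomorphism property is immediate from the formula. One should also note the values indeed lie in $\cC(\bQ)$, since each $1-e(\tfrac{z-a_i}{d_i})=1-\e_w(z)$ for $w=(a_i/d_i,\,\mathrm{id})$ (here $n=1$, $V^*\cong\bQ$, $\lambda\neq0$), and $\cC(\bQ)$ is a group so negative exponents are allowed.

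For the $\bQ^+$-equivariance, recall from the previous subsection that the action of $\g\in\GL_1^+(\bQ)=\bQ^+$ on $\xi\in\Hom(C_\bullet,M)$-type objects is by $(\eta|_\g)(f)=\eta(\g\cdot f)|_\g$; concretely, $\g$ acts on test functions by precomposition with multiplication by $\g$ and on $\cC(\bQ)\subseteq\cO_\bQ(\bC^1)^*$ by the corresponding substitution $z\mapsto z\g$ (or its appropriate normalization as fixed in the appendix conventions of Section \ref{sec4}). Tracking this through the generators: $\g\cdot\chi_{a+d\bZ}$ is the characteristic function of the lattice obtained by scaling by $\g^{-1}$, namely $\chi_{\g^{-1}a+\g^{-1}d\bZ}$, and $\eta$ of this is $1-e\!\left(\tfrac{z-\g^{-1}a}{\g^{-1}d}\right)=1-e\!\left(\tfrac{\g z - a}{d}\right)$, which is precisely $\eta(\chi_{a+d\bZ})$ with $z$ replaced by $\g z$, i.e.\ $\eta(\chi_{a+d\bZ})|_\g$. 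Hence $\eta(\g\cdot f)=\eta(f)|_\g$ for generators, and by additivity and multiplicativity for all $f$, giving $\eta|_\g=\eta$.

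The main obstacle is bookkeeping rather than conceptual: one must (i) be careful that the ``$0\le a<d$'' normalization is harmless and that two presentations really do have a common refinement reachable by the single subdivision identity, and (ii) pin down the precise $\bQ^+$-action convention from the appendix (whether $\g$ acts as $z\mapsto\g z$ or $z\mapsto\g^{-1}z$, and how this interacts with the orbit description of $C_0(V)$) so that the substitution in the last display matches $\cdot|_\g$ exactly. Once the action is written explicitly on the generators $\chi_{a+d\bZ}$, the verification is a one-line substitution. I expect the well-definedness reduction to a common refinement to require the most care to state cleanly.
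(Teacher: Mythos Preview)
Your approach is the same as the paper's: well-definedness of $\eta$ (handled just before the proposition via the subdivision identity \ref{wd}) makes the homomorphism property immediate, and the $\bQ^+$-invariance is checked on a generator $\chi_{a+d\bZ}$ by direct substitution.

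There is, however, a concrete slip in your equivariance paragraph. You correctly compute $\eta(\g\cdot\chi_{a+d\bZ})(z)=1-e\!\left(\tfrac{\g z-a}{d}\right)$, but then identify this as $\eta(\chi_{a+d\bZ})|_\g$ and conclude ``$\eta(\g\cdot f)=\eta(f)|_\g$, giving $\eta|_\g=\eta$''. Under the paper's convention (Appendix~\ref{sec4.1}) the action on $\cR(V)$ is $(F|_\g)(z)=F(z\g^{-1})$, so what you computed is actually $\eta(\chi_{a+d\bZ})|_{\g^{-1}}$, not $|_\g$. More importantly, even granting your intermediate identity as written, the final deduction is a non sequitur: combining $(\eta|_\g)(f)=\eta(\g\cdot f)|_\g$ with $\eta(\g\cdot f)=\eta(f)|_\g$ yields $(\eta|_\g)(f)=\eta(f)|_{\g^2}$, not $\eta(f)$. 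The correct chain (which is exactly the paper's computation) is
\[
(\eta|_\g)(\chi_{a+d\bZ})
=\eta(\g\cdot\chi_{a+d\bZ})\big|_\g
=\left(1-e\!\left(\tfrac{z-a\g^{-1}}{d\g^{-1}}\right)\right)\Big|_\g
=1-e\!\left(\tfrac{z\g^{-1}-a\g^{-1}}{d\g^{-1}}\right)
=1-e\!\left(\tfrac{z-a}{d}\right)
=\eta(\chi_{a+d\bZ}).
\]
This is precisely the bookkeeping you flagged as the main obstacle; once you reverse the direction of the substitution in $|_\g$, your argument goes through.
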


\begin{proof}
By construction, $\eta$ is a group homomorphism. For $a, d\in \bQ^+, a < d, \g \in \GL_1^+(\bQ)$, we compute
\begin{eqnarray*}
\left( \eta|_\g \right) (\chi_{a+d\bZ}) &=& \eta (\g \cdot \chi_{a+ d\bZ})|_\g \\
&=& \eta (\chi_{a \cdot \g^{-1} + d\bZ \cdot \g^{-1}})|_{\g}  \\
&=& \left( 1 - e\left(\frac{z - a \cdot \g^{-1}}{|d \cdot \g^{-1}|}\right) \right)|_\g \\
&=&1 - e\left(\frac{z\cdot \g^{-1} - a \cdot \g^{-1}}{d \cdot \g^{-1}}\right) = 1- e\left(\frac{z-a}{d}\right)= \eta(\chi_{a+d\bZ}).
\end{eqnarray*}
This proves the claim.
\end{proof}

When $\g \in \GL_1(\bQ)$ is a negative rational number, we have that 
$$
(\eta|_\g) (\chi_{a+d \bZ})= - e \left(\frac{-z+a}{d}\right) \cdot \eta(\chi_{a+d\bZ}),
$$
for $a, d \in \bQ^+, a < d$.


We can define a variant of the multiplicative Kubota-Leopoldt distribution which is invariant under the action of $\g \in \GL_1(\bQ)$:
\begin{eqnarray}
\eta^+ \left(\sum_{i=1}^m\a_i \cdot  \chi_{a_i + d_i \bZ}\right) :=
\prod_{i=1}^m \left( e(\frac{z-a_i}{2d_i}) - e(\frac{-z+a_i}{2d_i})\right)^{\a_i}, \quad z \in \bC,
\end{eqnarray}
where $e(z) = e^{2 \pi i z}$ and $d_i, a_i \in \bQ^+, a_i < d_i, \a_i \in \bZ$.
Then
$$
\eta^+(\chi_\bZ)=  e^{\pi i z}-e^{-\pi i z}=2i\sin(\pi z).
$$

\begin{prop} 
The map $\eta^+$ is a group homomorphism from $(\cS(\bQ), +)$ to $(\cC(\bQ), \cdot)$ such that 
$$
\eta^+|_\g = \eta^+, \quad \g \in \GL_1(\bQ)=\bQ^\times.
$$
\end{prop}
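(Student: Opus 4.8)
The plan is to verify, in sequence, that $\eta^+$ is (i) well-defined on $\cS(\bQ)$, (ii) a group homomorphism, and (iii) invariant under the full action of $\bQ^\times$. Since $\eta^+$ is defined on generators $\chi_{a+d\bZ}$ by an explicit product formula, being a homomorphism is built in once well-definedness is checked; so the real content is (i) and (iii).

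For well-definedness, I would mimic the computation \ref{wd} that handled $\eta$. Given the subdivision $\chi_{a+d\bZ} = \sum_{b=0}^{m-1}\chi_{a+db+dm\bZ}$, I compute
\[
\eta^+\Bigl(\sum_{b=0}^{m-1}\chi_{a+db+dm\bZ}\Bigr)
= \prod_{b=0}^{m-1}\Bigl( e\bigl(\tfrac{z-a-db}{2dm}\bigr) - e\bigl(\tfrac{-z+a+db}{2dm}\bigr)\Bigr).
\]
Writing $Y = e\bigl(\tfrac{z-a}{2dm}\bigr)$ and $\zeta_{m} = e\bigl(\tfrac{-1}{m}\bigr)$, the $b$-th factor is $Y\zeta_m^{b} - Y^{-1}\zeta_m^{-b}$, so the product equals $\prod_{b=0}^{m-1}\zeta_m^{b}\cdot\prod_{b=0}^{m-1}\bigl(Y^2 - \zeta_m^{-2b}\bigr)\cdot Y^{-m}$... — here one must be a little careful, since unlike the $\eta$ case the factors are not simply $1 - (\text{root of unity})X$. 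The cleaner route is to factor $e(\tfrac{w}{2}) - e(\tfrac{-w}{2}) = e(\tfrac{-w}{2})\bigl(e(w)-1\bigr) = -e(\tfrac{-w}{2})\bigl(1 - e(w)\bigr)$, so that
\[
\eta^+(\chi_{a+d\bZ})(z) = -\,e\Bigl(\tfrac{-(z-a)}{2d}\Bigr)\cdot\eta(\chi_{a+d\bZ})(z),
\]
i.e. $\eta^+ = (\text{explicit character-like factor})\cdot\eta$ on generators. Then well-definedness of $\eta^+$ reduces to well-definedness of $\eta$ (already proven in \ref{wd}) together with the elementary check that the prefactor $-e\bigl(\tfrac{a-z}{2d}\bigr)$ is multiplicative under the same subdivision: under $\chi_{a+d\bZ}=\sum_{b}\chi_{a+db+dm\bZ}$ one gets a product of $m$ prefactors $-e\bigl(\tfrac{a+db-z}{2dm}\bigr)$, whose product is $(-1)^m e\bigl(\tfrac{ma + d\binom{m}{2} - mz}{2dm}\bigr) = (-1)^m e\bigl(\tfrac{a-z}{2d}\bigr)e\bigl(\tfrac{(m-1)}{4}\bigr)$, and one reconciles the sign/phase discrepancy against the $(-1) = (1-X^m)/(1-X)\big|_{\text{sign}}$ bookkeeping from \ref{wd}. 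This phase bookkeeping is the one genuinely fiddly point, and I expect it to be the main obstacle.

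For the invariance (iii), I would split into $\g \in \bQ^+$ and $\g < 0$. For $\g \in \bQ^+$ the computation is identical in form to the one in the previous proposition for $\eta$: $\bigl(\eta^+|_\g\bigr)(\chi_{a+d\bZ}) = \eta^+(\chi_{a\g^{-1}+d\bZ\g^{-1}})|_\g$, and substituting $z \mapsto z\g^{-1}$ into $e\bigl(\tfrac{z - a\g^{-1}}{2|d\g^{-1}|}\bigr) - e\bigl(\tfrac{-z+a\g^{-1}}{2|d\g^{-1}|}\bigr)$ gives back $e\bigl(\tfrac{z-a}{2d}\bigr) - e\bigl(\tfrac{-z+a}{2d}\bigr)$ since the $\g^{-1}$ cancels in each fraction. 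For $\g = -c$ with $c \in \bQ^+$, one uses the already-recorded formula $(\eta|_\g)(\chi_{a+d\bZ}) = -e\bigl(\tfrac{-z+a}{d}\bigr)\eta(\chi_{a+d\bZ})$ together with $\eta^+ = -e\bigl(\tfrac{a-z}{2d}\bigr)\eta$; the two correction factors conspire: $\eta^+|_\g$ acquires the factor from $\eta|_\g$ times the transform of $-e\bigl(\tfrac{a-z}{2d}\bigr)$ under $z\mapsto -z$ (which is $-e\bigl(\tfrac{a+z}{2d}\bigr)$), and multiplying through shows the net effect on $\eta^+$ is trivial — precisely because $\eta^+$ was built to be even in $z$ up to the right twist. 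I would write this out one factor at a time on a generator $\chi_{a+d\bZ}$, then invoke the homomorphism property to extend to all of $\cS(\bQ)$, which completes the proof.
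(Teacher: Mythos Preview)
For the invariance, the paper takes a more direct route than you: it simply computes $(\eta^+|_\g)(\chi_{a+d\bZ})$ for negative $\g$ by rewriting the coset as $\chi_{a\g^{-1}+|d\g^{-1}|+|d\g^{-1}|\bZ}$, applying the defining formula for $\eta^+$, substituting $z\mapsto z\g^{-1}$, and simplifying; the positive case is dismissed as trivial. Your factorization $\eta^+(\chi_{a+d\bZ}) = -e\bigl(\tfrac{a-z}{2d}\bigr)\,\eta(\chi_{a+d\bZ})$ is a reasonable alternative that recycles the $\eta$-computation, though you would need to actually carry the cancellation through rather than assert it. The paper's proof does not address well-definedness or the homomorphism property at all.

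Your well-definedness argument, however, has a genuine gap rather than merely fiddly bookkeeping. The product of the $m$ prefactors under subdivision is indeed $(-1)^m e\bigl(\tfrac{a-z}{2d}\bigr)\, i^{m-1}$, but there is no compensating sign available from \eqref{wd}: the identity $\prod_{b}(1-X\zeta_m^b)=1-X^m$ used there is exact, with no stray $(-1)$. The net discrepancy is $(-i)^{m-1}$, which equals $1$ only when $m\equiv 1\pmod 4$. Concretely, for $m=2$ one has
\[
\eta^+(\chi_{2\bZ})\cdot\eta^+(\chi_{1+2\bZ}) = \bigl(2i\sin\tfrac{\pi z}{2}\bigr)\bigl(-2i\cos\tfrac{\pi z}{2}\bigr) = 2\sin(\pi z) = -i\cdot\eta^+(\chi_\bZ).
\]
So $\eta^+$ as literally defined is well-defined only up to a fourth root of unity. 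This is in fact a gap the paper shares (it never checks well-definedness for $\eta^+$), but your claim that the phase ``reconciles'' is wrong, and the route via the prefactor cannot close the argument.
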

\begin{proof}
When $\g \in \GL_1(\bQ)$ is a negative rational number,
\begin{eqnarray*}
\left( \eta|_\g \right) (\chi_{a+d\bZ}) &=& \eta (\g \cdot \chi_{a+ d\bZ})|_\g \\
&=& \eta (\chi_{a \cdot \g^{-1} + d\bZ \cdot \g^{-1}})|_{\g}  \\
&=& \eta (\chi_{a \cdot \g^{-1} +|d \cdot \g^{-1}|+ |d \cdot \g^{-1}|\bZ})|_{\g}  \\
&=& \left( e\left(\frac{z - a \cdot \g^{-1}-|d \cdot \g^{-1}|}{2|d \cdot \g^{-1}|}\right)-e\left(\frac{-z + a \cdot \g^{-1}+|d \cdot \g^{-1}|}{2|d \cdot \g^{-1}|}\right) \right)|_\g \\
&=& \left( e\left(\frac{z \cdot \g^{-1} - a \cdot \g^{-1}-|d \cdot \g^{-1}|}{2|d \cdot \g^{-1}|}\right)-e\left(\frac{-z \cdot \g^{-1} + a \cdot \g^{-1}+|d \cdot \g^{-1}|}{2|d \cdot \g^{-1}|}\right) \right) \\
&=& -e\left(\frac{z \cdot \g^{-1} - a \cdot \g^{-1}}{-2d \cdot \g^{-1}}\right)+e\left(\frac{-z \cdot \g^{-1} + a \cdot \g^{-1}}{-2d \cdot \g^{-1}}\right)  \\
&=&= e\left(\frac{z-a}{2d}\right)-e\left(\frac{-z+a}{2d}\right)= \eta(\chi_{a+d\bZ}).
\end{eqnarray*}
When $\g \in \GL_1(\bQ)$ has a positive sign, the computation is trivial.
\end{proof}

\subsection{Cocycle with values in a Milnor $K$-group valued distribution}\label{sec2.4}

Let $\{e_1^*, \cdots, e_n^*\}$ be the dual basis to the standard basis $\{e_1, \cdots, e_n\}$
of $V=\bQ^n$. Then $(V^*)_{ng}^n$ is the set of $n$-tuples $(\l_0, \cdots, \l_{n-1}) \in (V^*)^n$ such that 
$\{\l_0, \cdots, \l_{n-1}\}$ is a $\bQ$-basis of $V^*$.
Then for $(\l_0, \cdots, \l_{n-1}) \in (V^*)^n_{ng}$,
there is a unique $\g \in G=\GL_n(\bQ)$ such that 
$$
\g \l_i = e_i^*, \text{ for } 0 \leq i \leq n-1.
$$

For $f \in \cS(V)$ and $(\l_0, \cdots, \l_{n-1}) \in (V^*)^n_{ng}$, we define a $\bQ$-linear map $\xi: (V^*)^n_{ng} \to Dist(V, \tilde K_M^n(V))$ by the following rule:
\begin{eqnarray} \label{kd}
\xi(\l_0, \cdots,\l_{n-1})(f) &:=&\left( \xi(e_1^*, \cdots, e_n^*)(\g\cdot f)\right) |_\g  \nonumber \\
&:=&\left(\sum_j b_j\{\eta (\chi_{a_j^{(1)} + d_j \bZ})(z_1),\eta (\chi_{a_j^{(2)} + d_j \bZ})(z_2), \cdots, \eta(\chi_{a_j^{(n)} + d_j\bZ})(z_n) \} \right) |_\g
\end{eqnarray}
where we use the fact that $\g\cdot f$ can be written as 
$$
\g\cdot f=\sum_j b_j \cdot \chi_{\underline{a_j} + d_j \bZ^n}
$$ 
for some $b_j \in \bZ$ and $\underline{a_j}=(a_j^{(1),} \cdots, a_j^{(n)})$ and $d_j \in \bN$.

Here we view $\eta (\chi_{a_j^{(i)} + d_j \bZ})(z_i)=1-e\left(\frac{z_i-a_j^{(i)}}{d_j}\right)$ as
a function of $(z_1, \cdots, z_n) \in V_\bC=\C^n$. We need to show that the definition of $\xi$ does not depend on a presentation of a test function $f \in \cS(V)$.
If we write
$$
\underline{a} + d\bZ^n =
\bigcup_{\underline{b}} \underline{a} + d( \underline{b} +m\bZ^n),
\quad \underline{a} =(a_1, \cdots, a_n), \ \underline{b}=(b_1,\cdots, b_n),
$$
then
\begin{eqnarray*}
\chi_{\underline{a}+d\bZ^n} =\chi_{a_1 +  d\bZ} \otimes \cdots \otimes \chi_{a_n + d \bZ}&=&
(\sum_{b_1=0}^{m-1}\chi_{a_1 + d b_1  +dm \bZ}) \otimes \cdots \otimes (\sum_{b_n=0}^{m-1}\chi_{a_n + d b_n  +dm \bZ})\\
&=& \sum_{b_1, \cdots, b_n=0}^{m-1}
\chi_{\underline{a} + d \underline{b} + dm \bZ^n}.
\end{eqnarray*}

\begin{eqnarray*}
\tilde\xi(e_1^*, \cdots, e_n^*)(\sum_{b_1, \cdots, b_n=0}^{m-1}
\chi_{\underline{a} + d \underline{b} + dm \bZ^n})
&=& \sum_{b_1, \cdots, b_n=0}^{m-1} \{\eta(\chi_{a_1 + d b_1  +dm \bZ)}(z_1),\cdots, \eta(\chi_{a_n + d b_n  +dm \bZ)}(z_n)\}\\
&=&\{\eta(\sum_{b_1=0}^{m-1}\chi_{a_1 + d b_1  +dm \bZ})(z_1), \cdots, \eta(\sum_{b_n=0}^{m-1}\chi_{a_1 + d b_n  +dm \bZ})(z_n)\}\\
&=&\{\eta(\chi_{a_1+d \bZ})(z_1), \cdots, 
\eta(\chi_{a_n + d \bZ})(z_n)\}.
\end{eqnarray*}

Note that $\l \in V^*$ induces a map $\l^*: \cR(\bQ) \to \cR(V)$ by the pullback.
In fact, if one wants to define $\xi \in \Hom((V^*)^n_{ng}, \mathrm{Dist}(V, \tilde K_n^M(V))$
such that $\xi|_\g = \xi, \forall \g \in \GL(V)$, then it is enough to define $\xi(e_1^*, \cdots, e_n^*) (f_1 \otimes \cdots \otimes f_n)$ for any factorisable $f=f_1\otimes \cdots \otimes f_n \in \cS(\bQ) \otimes \cdots \otimes \cS(\bQ) \cong \cS(V)$ by the $\GL(V)$-invariance and $\bZ$-linearity of $\xi$. 
Our definition \ref{kd} of $\xi(e_1^*, \cdots, e_n^*)(f)$ is the same as
$$
\xi(e_1^*, \cdots, e_n^*)(f_1 \otimes \cdots \otimes f_n) :=
\{(e_1^*)^*(\eta (f_1)), \cdots, (e_n^*)^*( \eta (f_n) )  \}=
\{\eta (f_1) (z_1), \cdots, \eta (f_n) (z_n)  \} \text{ in } \tilde K_n^M(V).
$$
This gives another proof that the definition \ref{kd} does not depend on the presentation of the test function.

\begin{thm}\label{mt}
The map $\xi$ induces a well-defined map 
$$
\xi: C_{n-1}(V) \to \operatorname{Dist}(V, \tilde K^M_n(V))
$$
such that 
$$
\partial^*(\xi) =0, \quad \xi |_\g = \xi \text{ for every } \g \in G=GL_n(\bQ).
$$
In other words, $\xi \in Z^{n-1}(V,\operatorname{Dist}(V, \tilde K_n^M(V))
)$.
\end{thm}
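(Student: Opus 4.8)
The plan is to establish the three assertions separately: that \ref{kd} descends to a well-defined homomorphism $\xi\colon C_{n-1}(V)\to\operatorname{Dist}(V,\tilde{K}_n^M(V))$, that $\xi|_\gamma=\xi$ for all $\gamma\in\GL_n(\bQ)$, and that $\partial^*(\xi)=0$. The first two assertions are bookkeeping on top of facts already in hand; essentially all the content sits in the cocycle identity, which is where the Dedekind reciprocity law (Theorem \ref{drl}) and the passage to $\tilde{K}_n^M(V)$ are used.

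\emph{Well-definedness.} Independence of \ref{kd} from the chosen presentation of the test function is already disposed of above: on a factorizable $f=f_1\otimes\cdots\otimes f_n$ one has $\xi(e_1^*,\dots,e_n^*)(f)=\{\eta(f_1)(z_1),\dots,\eta(f_n)(z_n)\}$, and $\eta$ is a bona fide homomorphism $(\cS(\bQ),+)\to(\cC(\bQ),\cdot)$. It remains to check that \ref{kd} depends only on the $(\bQ^+)^n$-orbit of $(\lambda_0,\dots,\lambda_{n-1})$, so that $\xi$ is defined on the generators of $C_{n-1}(V)$ and then extends $\bZ$-linearly. Replacing $\lambda_i$ by $t_i\lambda_i$ with $t_i\in\bQ^+$ replaces the normalizing matrix $\gamma$ (the unique element with $\gamma\lambda_i=e_i^*$) by $D\gamma$, $D=\mathrm{diag}(t_0^{-1},\dots,t_{n-1}^{-1})$; using $(\cdot|_a)|_b=\cdot|_{ab}$, the claim reduces to $\xi(e_1^*,\dots,e_n^*)|_D=\xi(e_1^*,\dots,e_n^*)$, which is exactly the componentwise $\GL_1^+(\bQ)$-invariance of $\eta$ proved in Section \ref{sec2.3}.

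\emph{$\GL_n(\bQ)$-invariance.} This is formal. If $\gamma_0$ is the normalizing matrix of $(\lambda_0,\dots,\lambda_{n-1})$, then $\gamma_0\gamma^{-1}$ is the normalizing matrix of $(\gamma\lambda_0,\dots,\gamma\lambda_{n-1})$, so
$$(\xi|_\gamma)(\lambda_0,\dots,\lambda_{n-1})=\xi(\gamma\lambda_0,\dots,\gamma\lambda_{n-1})|_\gamma=\bigl(\xi(e_1^*,\dots,e_n^*)|_{\gamma_0\gamma^{-1}}\bigr)|_\gamma=\xi(e_1^*,\dots,e_n^*)|_{\gamma_0}=\xi(\lambda_0,\dots,\lambda_{n-1}),$$
using the base-point identity $\xi(e_1^*,\dots,e_n^*)|_{\mathrm{id}}=\xi(e_1^*,\dots,e_n^*)$.

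\emph{The cocycle identity.} Because $\xi|_\gamma=\xi$ and $\partial$ is $\GL_n(\bQ)$-equivariant, $\xi(\partial[\gamma\lambda_0,\dots,\gamma\lambda_n])=\xi(\partial[\lambda_0,\dots,\lambda_n])|_{\gamma^{-1}}$, so it suffices to verify $\xi(\partial[\lambda_0,\dots,\lambda_n])(f)=0$ for one $\GL_n(\bQ)$-orbit representative of each class in $(V^*)^{n+1}_{ng}$ and all $f$. Normalizing by transitivity on ordered bases, take $(\lambda_0,\dots,\lambda_{n-1})=(e_1^*,\dots,e_n^*)$ and $\lambda_n=\mu=\sum_{j=1}^n c_j e_j^*$ with every $c_j\in\bQ^\times$ (general position), and by $\bZ$-linearity and the fact that $\cS(V)$ is spanned by the $\chi_{\underline a+d\bZ^n}$ take $f=\chi_{\underline a+d\bZ^n}=\bigotimes_j\chi_{a_j+d\bZ}$. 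Evaluating \ref{kd} on the $n+1$ faces, the face omitting $\mu$ gives $\{1-e(\tfrac{z_1-a_1}{d}),\dots,1-e(\tfrac{z_n-a_n}{d})\}$, while each face omitting some $e_{i+1}^*$ is evaluated after the change of coordinates adapted to the basis $(e_1^*,\dots,\hat{e}_{i+1}^*,\dots,e_n^*,\mu)$, which re-expands the image of $\underline a+d\bZ^n$ into axis-parallel pieces and produces Milnor symbols carrying a factor $1-e\bigl(\tfrac{\mu(z)-\ast}{\ast}\bigr)$ in the $\mu$-slot. Using the relation $\mu=\sum_j c_j e_j^*$ together with the multiplicative telescoping $1-\prod_j X_j=\sum_k\bigl(\prod_{j<k}X_j\bigr)(1-X_k)$ to split these factors, and expanding all face-symbols by multilinearity, one identifies $\sum_{i=0}^n(-1)^i\xi(\mathrm{face}_i)(f)$ with the left-hand side of the Dedekind reciprocity relation of Theorem \ref{drl} for a suitable system of units $u_0,\dots,u_n\in\cR(V)^\times$ with $u_0=u_1+\cdots+u_n$ and all partial sums of the form $1-\e_w\in\cC(V)$, modulo a remainder all of whose Milnor symbols have at least one entry in $\cE(V)$. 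By Theorem \ref{drl} the reciprocity term lies in the ideal generated by $\{-1\}$, which — together with the $\cE(V)$-entried remainder, which lies in $J(V)$ — is contained in $\tilde{J}(V)$ by Definition \ref{targetk}; hence the whole alternating sum vanishes in $\tilde{K}_n^M(V)$, i.e.\ $\partial^*(\xi)=0$. The main obstacle is precisely this last step: performing the coordinate changes and the re-expansion of the transformed affine lattices into axis-parallel pieces while keeping the combinatorics tractable, and then verifying both that the units so obtained satisfy the ``partial sums are units'' hypothesis of Theorem \ref{drl} and that every sign factor $-e(\cdots)$ (arising from negative coordinates of $\mu$) and every telescoping factor $e(\cdots)$ occurs inside a Milnor symbol whose remaining entries all lie in $\cC(V)$, so that it is killed in $\tilde{K}_n^M(V)$ — which is exactly why the target of $\xi$ must be $\tilde{K}_n^M(V)$ rather than $K_n^M(V)$.
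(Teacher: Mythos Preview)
Your well-definedness and $\GL(V)$-invariance arguments match the paper's. The real difference is in the cocycle step, and there you make the computation harder than necessary and then stop short of actually carrying it out.

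You normalize to $(\lambda_0,\dots,\lambda_{n-1})=(e_1^*,\dots,e_n^*)$ and keep $\lambda_n=\mu=\sum_j c_j e_j^*$ with arbitrary $c_j\in\bQ^\times$, then sketch an argument involving ``re-expansion of the transformed affine lattices into axis-parallel pieces'' and ``multiplicative telescoping'', conceding at the end that this is the main obstacle. The paper sidesteps the obstacle entirely by pushing the $\GL(V)$-reduction one step further: applying the diagonal matrix $\mathrm{diag}(c_1^{-1},\dots,c_n^{-1})$ (and absorbing positive scalars into the $(\bQ^+)^n$-quotient already built into $C_{n-1}(V)$) one reduces to all $c_j=1$, i.e.\ to the single tuple $[e_1^*+\cdots+e_n^*,e_1^*,\dots,e_n^*]$. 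With this choice the change-of-basis matrix sending $(e_1^*+\cdots+e_n^*,e_2^*,\dots,e_n^*)$ to $(e_1^*,\dots,e_n^*)$ is the unipotent shear
\[
\alpha=\begin{bmatrix}1&0&\cdots&0\\ 1&1&\cdots&0\\ \vdots& &\ddots& \\ 1&0&\cdots&1\end{bmatrix},
\]
and since $\det\alpha=1$ the lattice is preserved, $d\bZ^n\cdot\alpha=d\bZ^n$: no re-expansion into finer lattices is needed at all. The face omitting $e_1^*$ evaluates directly to
\[
\bigl\{1-e\bigl(\tfrac{z_1+\cdots+z_n-(a_1+\cdots+a_n)}{d}\bigr),\,1-e\bigl(\tfrac{z_2-a_2}{d}\bigr),\dots,1-e\bigl(\tfrac{z_n-a_n}{d}\bigr)\bigr\},
\]
and the other faces are computed by the analogous shears. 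Writing $X_j=e\bigl(\tfrac{z_j-a_j}{d}\bigr)$ and
\[
u_0=1-X_1\cdots X_n,\qquad u_i=(1-X_i)\prod_{m<i}X_m\quad(i\ge 1),
\]
the telescoping $u_0=u_1+\cdots+u_n$ and the partial sums $u_1+\cdots+u_k=1-X_1\cdots X_k\in\cR(V)^\times$ are immediate, so Theorem~\ref{drl} applies directly. The passage from the actual face symbols to $\{u_0,\dots,\hat u_i,\dots,u_n\}$ is a one-line multilinearity step: the extra factors $\prod_{m<i}X_m\in\cE(V)$ are absorbed into $J(V)$, which is exactly why one works in $\tilde K_n^M(V)$.

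So your outline has the right ingredients (Dedekind reciprocity, telescoping, $\cE(V)$-remainders into $\tilde J$), but the ``main obstacle'' you name is an artifact of not pushing the normalization far enough. Once $\mu$ is specialized to $e_1^*+\cdots+e_n^*$, the coordinate changes are unipotent, the lattice stays axis-parallel with the same modulus $d$, and the units $u_i$ can be written down and checked explicitly rather than merely asserted to exist.
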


\begin{proof}

Now we show that $\xi$ factors through $C_{n-1}(V) = (V^*)^n_{ng}/(\bQ^+)^n$.
For $\underline b =(b_1, \cdots, b_n) \in (\bQ^+)^n$, let $dia(\underline b)$ be the diagonal matrix:
$$
dia(\underline b)=\begin{bmatrix}
    b_1 & 0 & \cdots & 0& 0 \\
    0 & b_2 &0 & \cdots & 0 \\
    0 & 0 & \cdots & \cdots &   0    \\
    0 & 0& \cdots & 0  & b_n
  \end{bmatrix}.
$$
For $\underline b=(b_1, \cdots, b_n) \in (\bQ^+)^n$, we compute
\begin{eqnarray*}
\xi(b_1 e_1^*, \cdots, b_n e_n^*)(\chi_{\underline{a}+d\bZ^n})&=&\xi(e_1^*,\cdots, e_n^*)(dia(\underline b)^{-1} \cdot \chi_{\underline{a}+d\bZ^n})|_{dia(\underline b)^{-1}} \\
&=& \xi(e_1^*,\cdots, e_n^*)(\chi_{b_1a_1 + b_1 d\bZ} \otimes \cdots \otimes \chi_{b_na_n + b_n d \bZ})|_{dia(\underline b)^{-1}} \\
&=& \{\eta(\chi_{b_1a_1 + b_1 d\bZ})(z_1), \cdots, \eta(\chi_{b_na_n + b_n d\bZ} )(z_n)  \}|_{dia(\underline b)^{-1}} \\
&=& \{\eta(\chi_{b_1a_1 + b_1 d\bZ})(b_1z_1), \cdots, \eta(\chi_{b_na_n + b_n d\bZ} )(b_nz_n)  \}|\\
&=& \{\eta(\chi_{a_1 +  d\bZ})(z_1), \cdots, \eta(\chi_{a_n +  d\bZ} )(z_n)  \}\\
&=& \xi(e_1^*, \cdots, e_n^*)(\chi_{\underline{a}+d\bZ^n}).
\end{eqnarray*}
A same computation shows that
$$
\xi(b_1 \l_0, \cdots, b_n \l_{n-1})(\chi_{\underline{a}+d\bZ^n})=\xi(\l_0, \cdots, \l_{n-1})(\chi_{\underline{a}+d\bZ^n}),
$$
for $(\l_0,\cdots, \l_{n-1}) \in (V^*)^n_{ng}$.
Therefore, $\xi$ factors through $C_{n-1}(V) = (V^*)^n_{ng}/(\bQ^+)^n$.

By the construction, the $GL(V)$-invariance follows directly:
$$
\xi|_\g =\xi, \quad \g \in GL(V).
$$

Finally, we prove the cocycle condition. For $[\l_0, \cdots, \l_n] \in C_n(V)$, the representative $(\l_0, \cdots, \l_n)$ satisfies that $(\l_0, \cdots, \hat{\l}_i, \cdots, \l_n) \in (V^*)^n_{ng}$ for any $i$.
The statement that $(\partial^*\xi) ([\l_0, \cdots, \l_n])(f)=0$ for every $[\l_0, \cdots, \l_n] \in C_n(V)$ and every $f \in \cS(V)$ is equivalent to the statement that 
$$
(\partial^* \xi) ([e_1^*+\cdots+e_n^*, e_1^*, \cdots, e_n^*]) (\chi_{\underline a +d \bZ^n}) =0, \quad \underline{a} \in \bQ^n, d \in \bN,
$$
because of the $GL(V)$-invariance and $\bZ$-linearity of $\xi$.
If we let 
$$
\a=\begin{bmatrix}
    1 & 0 & \cdots & 0& 0 \\
    1 & 1 &0 & \cdots & 0 \\
    \cdots & 0 & \cdots & 1  &   0    \\
    1 & 0& \cdots & 0  & 1
  \end{bmatrix},
$$
then $\a \cdot e_1^*= e_1^* +\cdots + e_n^*$ and $\a \cdot e_j^* = e_j^*$ for $j=2, \cdots, n$. We compute:
\begin{eqnarray*}
&&\xi([e_1^*+\cdots+e_n^*, e_2^*, \cdots, e_n^*])(\chi_{\underline{a}+d\bZ^n})\\
&=&\xi(\a\cdot [e_1^*,\cdots, e_n^*])(\chi_{\underline{a}+d\bZ^n}) \\
&=&\xi( [e_1^*,\cdots, e_n^*])(\chi_{\underline{a}\cdot \a^{}+d\bZ^n\cdot \a^{}})|_{\a^{-1}} \\
&=&\xi( [e_1^*,\cdots, e_n^*])(\chi_{(a_1+\cdots+a_n,a_2,\cdots, a_n)+d\bZ^n})|_{\a^{-1}} \\
&=& \{\eta(\chi_{a_1+ \cdots+a_n + d\bZ})(z_1), \eta(\chi_{a_2+d \bZ})(z_2),\cdots, \eta(\chi_{a_n + d\bZ})(z_n)   \}|_{\a^{-1}}  \\
&=& \{\eta(\chi_{a_1+ \cdots+a_n + d\bZ})(z_1+\cdots+z_n), \eta(\chi_{a_2+d \bZ})(z_2),\cdots, \eta(\chi_{a_n + d\bZ})(z_n)   \}. \\
\end{eqnarray*}
This computation implies that
\begin{eqnarray*}
&&(\partial^* \xi) ([e_1^*+\cdots+e_n^*, e_1^*, \cdots, e_n^*]) (\chi_{\underline a +d \bZ^n}) \\
&=& \{\eta(\chi_{a_1+ d\bZ})(z_1), \eta(\chi_{a_2+d \bZ})(z_2),\cdots, \eta(\chi_{a_n + d\bZ})(z_n)   \} \\
&+&\sum_{i=1}^n (-1)^i\{\eta(\chi_{a_1+ \cdots+a_n + d\bZ})(z_1+\cdots+z_n), \eta(\chi_{a_1+d \bZ})(z_1),\cdots, \widehat{\eta(\chi_{a_i + d\bZ})}(z_i) , \cdots,  \eta(\chi_{a_n + d\bZ})(z_n)   \} \\
&=& \{1-e(\frac{z_1-a_1}{d}), 1-e(\frac{z_2-a_2}{d}),\cdots, 1-e(\frac{z_n-a_n}{d})   \} \\
&+&\sum_{i=1}^n (-1)^i\{1-e\left(\frac{z_1+\cdots + z_n - (a_1+ \cdots+a_n )}{d}\right), 1-e(\frac{z_1-a_1}{d}),\cdots, \widehat{1-e(\frac{z_i-a_i}{d})} , \cdots,  1-e(\frac{z_n-a_n}{d})  \} \\
&\equiv& 
\sum_{i=0}^n (-1)^i \{u_0, \cdots, \hat{u}_i, \cdots, u_n \} \quad (\mod J),
\end{eqnarray*}
where 
\begin{eqnarray*}
&& u_0:=1-e\left(\frac{z_1+\cdots + z_n - (a_1+ \cdots+a_n )}{d}\right), \quad u_1:=1-e(\frac{z_1-a_1}{d}),\\
&& u_i:= \left(1-e(\frac{z_i-a_i}{d})\right) \prod_{m=1}^{i-1} e(\frac{z_m-a_m}{d}).
\end{eqnarray*}
Then $u_0 = u_1 + \cdots + u_n$ and $u_1 + \cdots + u_k \in \cR(V)^\times$ for each $k \geq 1$. Then Theorem \ref{drl} (Dedekind reciprocity law) implies that (see Definition \ref{targetk} for $\tilde J$)
$$
\sum_{i=0}^n (-1)^i \{u_0, \cdots, \hat{u}_i, \cdots, u_n \}  \in \tilde J.
$$
This proves the cocycle property
$$
\partial^*(\xi) \equiv 0  \text{ in }  \operatorname{Dist}(V,\tilde K_n^M(V)).
$$
\end{proof}

\begin{defn}\label{Eisn}
For each $n$, we define $\xi_n^{St}$ to be the element 
$$
\xi \in Z^{n-1}(V, \operatorname{Dist}(V,\tilde K_n^M(V)))\subseteq\Hom_{\GL(V)}(C_{n-1}(V),
\operatorname{Dist}(V,\tilde K_n^M(V)))
$$ 
in Theorem \ref{mt}.
\end{defn}

When $n=1$, then $ \xi_1^{St}$ belongs to $\Hom_{\bQ^\times}( C_0(\bQ), 
\mathrm{Dist}(\bQ,  \tilde K_1^M(\bQ)))$. Note that 
$ C_0(\bQ)$ is a free abelian group of rank 2 generated $[ e_1^* ]$ and 
$[-e_1^*]$
and $\tilde K_1^M(\bQ) = K_1^M(\bQ)/\tilde J_1(\bQ)$. We have
$$
 \xi_1^{St} ([ e_1^* ]) (\chi_{a + d\bZ}) = \{1- e \left(\frac{z-a}{d} \right)\}, \quad a, d \in \bQ^+, a <d.
$$
On the other hand,
$$
 \xi_1^{St} ([-e_1^*]) (\chi_{a + d \bZ}) =\{\left(  \xi_1^{St} (e_1^*) (\chi_{-a - d \bZ}) \right)\}|_{-1}=\{\left(  \xi_1^{St} (e_1^*) (\chi_{(d-a) + d \bZ}) \right)\}|_{-1}
=\{1-e\left(\frac{-z+a}{d} \right)\}.
$$
Because
$$
-e \left(\frac{z-a}{d} \right) \cdot \left(1-e\left(\frac{-z+a}{d} \right)\right) = 1- e \left(\frac{z-a}{d} \right),
$$
we have that 
$$
 \xi_1^{St} (e_1^*) (\chi_{a + d\bZ}) -  \xi_1^{St} (-e_1^*) (\chi_{a + d \bZ}) 
\in \tilde J_1(\bQ)
$$ 
and thus $ \xi_1^{St} ([e_1^*]) = \xi_1^{St} ([-e_1^*]) $.

\begin{defn}
Denote the image of the Stevens cocycle $\xi_n^{St}$ (given in Definition~\ref{Eisn}) under the map in Corollary~\ref{GpCocycle} by $\Phi_n^{St} := \Phi_{e_1^*}(\xi_n^{St})$, which is a homogeneous group cocycle. We also refer to this as the Stevens cocycle:
\begin{eqnarray*}
& \Phi_n^{St}  : (\GL_n \Q)^n \rightarrow \on{Dist} ( \Q^n, \tilde{K}_n^M(V) ) \\
& \Phi_n^{St}(\gamma_1, \cdots \gamma_n)(f) =\xi_n^{St}([\gamma_1 \cdot e_1^*, \cdots \gamma_n \cdot e_1^* ] )(f).
\end{eqnarray*}
\end{defn}

\section{The Shintani group cocycle and the comparison}
\label{sec3}

\subsection{The Shintani cocycle}\label{sec3.1}

The Shintani cocycle, defined per each natural number $n$, is a homogeneous group $n$-cocycle:
    \begin{align*}
        [\Phi^{Sh}_n] \in H^{n-1}\left(\on{GL}_n(\RR), \operatorname{Dist} \left(\bA_f^n \backslash \{0\}, \bC((T_1, \cdots T_n))  \right)\right),
    \end{align*}
    where $\bA_f$ is the ring of finite adeles of $\bQ$
 and $\bC(( T_1, \cdots, T_n))$ is the ring of Laurent power series with $n$-variables $T_1, \cdots, T_n$.
We briefly recall its definition following \cite{H}, which we refer to the reader for more details.

Let $\mathcal L_{\Q^n}$ be the abelian group generated by characteristic functions of open rational cones, modulo constant functions. We recall \emph{the Solomon-Hu pairing} from \cite{H}:
\begin{align}\label{SH}
\begin{split}
& \langle \cdot , \cdot \rangle_{SH} : \cL_{\QQ^n} \times \cS(\bA_f^n \backslash \{0\}) \rightarrow \bC((T_1, \cdots T_n )) \\
& \langle c, f \rangle_{SH} := \frac1{1-e^{v_1 T_1}} \cdots \frac1{1-e^{v_r \cdot T_r}} \sum_{w \in \cP_{c,f} \cap \Q^n} f(w) e^{w \cdot T}
\end{split}
\end{align}
where $c=\chi_{\bR^+ v_1 + \cdots + \bR^+ v_r}$ and vectors $\{ v_1, \cdots v_r \} \subset \Q^n$ are assumed to belong to the lattice of periods of test function $f$ (if $L_f$ is the lattice of periods of $f$, by linear dependence $\forall v \in \QQ^n, \exists n: n \cdot v \in L_f$) and $\cP_{c,f} := (0,1] v_1 + \cdots + (0,1] v_r$ is the fundamental parallelogram for $c$. Note that the pairing formally expresses the quantity $\sum_{w \in \Q^n} c(w) f(w) e^{w \cdot z}$, but this quantity is not well-defined as a formal power series and thus we `multiply' it by $\prod_j (1-e^{v_j z_j})^{-1}$ to obtain a finite sum $\sum_{w \in P_{c, f}} f(w) e^{w \cdot z}$.

The Shintani cocycle is defined by supplying this pairing with an appropriate polyhedral cone function $\sigma_n^{Sh}$:
$$\Phi_n^{Sh}:= \langle \sigma_n^{Sh}, \cdot \rangle_{SH}.$$
The polyhedral cone function $\sigma_n^{Sh}$ (see \ref{pcf} below) is defined as a cone function in $\FF^n$ for some ordered field $\FF$ followed by the restriction to $\R^n$ by the embedding $\R^n \rightarrow \FF^n$.
Define $\FF = \RR((\epsilon_1)) \cdots ((\epsilon_n))$ and give its monomials lexicographic ordering: if $\epsilon^r = \epsilon_1^{r_1} \cdots \epsilon_n^{r_n}$ and $\epsilon^s = \epsilon_1^{s_1} \cdots \epsilon_n^{s_n}$ are two monomials, then $\epsilon^r \succ \epsilon^s$ iff for some $1 \le i \le n$, $r_n = s_n, r_{n-1} = s_{n-1}, \cdots r_{i+1} = s_{i+1}, r_i < s_i$. When $\epsilon^r \succ \epsilon^s$, we say that $\epsilon^r$ succedes $\epsilon^s$. In a linear combination of such monomials, the most succeding term is called the leading term.

Order elements of $\FF$ by defining $r>0$ ($r \in \FF$) iff the coefficient of its leading term is positive, and defining $r>s \iff r-s>0$. This is a total order that is well-behaved under addition and positive multiplication.
Here we are using the opposite convention from that of \cite{H}; preceding relation there is changed to succeeding relation. This is done so that $\succ$ represents magnitude of monomials when we think of $\epsilon_1, \epsilon_2, \epsilon_3 \cdots$ as infinitesimals that get progressively smaller. For example,
$$\cdots \succ \epsilon_2^{-2} \succ \epsilon_2^{-1} \succ \cdots \succ \epsilon_1^{-2} \succ \epsilon_1^{-1} \succ \cdots \succ 1 \succ \cdots \succ \epsilon_1 \succ \epsilon_1^2 \succ \cdots \succ \epsilon_2 \succ \epsilon_2^2 \succ \cdots .$$
As an illustration, $\epsilon_1^{1000} \succ \epsilon_2$ since $\epsilon_2$ is another magnitude smaller than $\epsilon_1$, no matter `how small $\epsilon_1$ tries to get'. The sign of an element of $\FF$ is determined by the coefficient of its leading term, which is intuitively the monomial of the largest magnitude. For example, $\epsilon_1^{1000} - 1000 \epsilon_2 > 0$ since $\epsilon_2$ is another magnitude smaller than $\epsilon_1^{1000}$ regardless the coefficient $-1000$. 

Define a polyhedral cone function $\sigma_n^{Sh}$ (see section 4, \cite{H} for details):
\begin{align}\label{pcf}
\begin{split}
    & [\sigma_n^{Sh}] \in H^{n-1}(\on{GL}_n(\RR), \cL_{\Q^n}) \\
    & \sigma_n^{Sh} (\alpha_1, \cdots \alpha_n)(w) := c(\alpha_1 b(\epsilon_1), \cdots \alpha_n b(\epsilon_n))(w), \quad w \in \bR^n\setminus \{0\}.
\end{split}    
\end{align}
Here $b$ is defined by
$$b(\epsilon) := \begin{bmatrix} 1 & \epsilon & \cdots & \epsilon^{n-1} \end{bmatrix}^\text{T}$$
and $c$ is a cone function defined per each basis $(v_1, \cdots v_n) \subset \FF^n$ given by
\begin{align}\label{sign}
\begin{split}
    & c(v_1, \cdots v_n) : \FF^n \rightarrow \ZZ \\
    & c: (v_1, \cdots v_n): w \mapsto \begin{cases} \on{sign}\on{det}(v_1, \cdots v_n) & \text{if $w = \sum_j \lambda_j v_j$ with $\forall j, \lambda_j >0$ } \\ 0 & \text{ otherwise} \end{cases}
\end{split}  
\end{align}
(sign of determinant is given precisely by the ordering of $\FF$). The proof that $c$ is indeed a polyhedral cone function and $\sigma_n^{Sh}$ is a cocycle with values in $\cL_{\bQ^n}$ was given in \cite{H}. The delicate sign conventions in \ref{sign} is the key to make $\sigma_n^{Sh}$ a group cocycle.


\subsection{The Naive Shintani cocycle}\label{sec3.2}

The naive Shintani cocycle is a `naive' version of the Shintani cocycle, which is not a cocycle at first but becomes a cocycle modulo certain elements. The fact that the naive Shintani cocycle is a cocycle will be a corollary of our comparison result between the naive Shintani cocycle and the Stevens cocycle.

The following result on the usual Shintani cocycle motivates us to define the Naive Shintani function (Definition \ref{nsh}).
\begin{prop} \label{rhos}
    Let $\rho \in \GL_n(\RR)$ be the shift permutation defined by $\rho_{1,n} = 1, \rho_{i+1, i} = 1$ for $i=1, \cdots n-1$ and all other entries zero. Then
    \begin{align*}
        \sigma_n^{Sh} (1, \rho, \cdots \rho^{n-1}) = \chi_{(\RR^+)^n}.
    \end{align*}
\end{prop}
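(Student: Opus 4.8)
The plan is to make the infinitesimal perturbation completely explicit and then read off both the sign of the determinant and the shape of the resulting cone. By \ref{pcf} we must evaluate $\sigma_n^{Sh}(1,\rho,\dots,\rho^{n-1})(w)=c\bigl(b(\epsilon_1),\rho b(\epsilon_2),\dots,\rho^{n-1}b(\epsilon_n)\bigr)(w)$ for $w\in\RR^n\setminus\{0\}$. Set $v_i:=\rho^{i-1}b(\epsilon_i)$. Since $\rho$ cyclically shifts coordinates, sending a column $(u_1,\dots,u_n)^{\mathrm T}$ to $(u_n,u_1,\dots,u_{n-1})^{\mathrm T}$, iterating gives $(v_i)_p=\epsilon_i^{\,(p-i)\bmod n}$ with the exponent taken in $\{0,1,\dots,n-1\}$; thus the matrix $M$ with columns $v_1,\dots,v_n$ has $M_{pi}=\epsilon_i^{\,(p-i)\bmod n}$, with $1$'s on the diagonal and $\epsilon_i^{k}$ ($1\le k\le n-1$) off it.

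First I would compute $\det M$. Expanding, $\det M=\sum_{\pi\in S_n}\sgn(\pi)\prod_{i}\epsilon_i^{\,(\pi(i)-i)\bmod n}$; the identity permutation contributes the monomial $1$ with coefficient $+1$, while every $\pi\ne\mathrm{id}$ contributes $\pm$ a monomial $\prod_i\epsilon_i^{c_i}$ with all $c_i\ge 0$ and at least one $c_i>0$, hence $\prec 1$ in the lexicographic order on $\FF$. So $\det M=1+(\text{an }\FF\text{-combination of monomials}\prec1)$, its leading term is $1$, and $\det M>0$; in particular $\{v_1,\dots,v_n\}$ is an $\FF$-basis and $\sign\det(v_1,\dots,v_n)=1$. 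Consequently, writing the unique $\lambda=(\lambda_j)\in\FF^n$ with $M\lambda=w$, we have $c(v_1,\dots,v_n)(w)=1$ if all $\lambda_j>0$ and $c(v_1,\dots,v_n)(w)=0$ otherwise, by \ref{sign}.

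Next I would identify for which real $w$ all the $\lambda_j$ are positive. If $w_q\le 0$ for some coordinate $q$, the $q$-th equation of $M\lambda=w$ reads $w_q=\lambda_q+\sum_{j\ne q}\epsilon_j^{\,(q-j)\bmod n}\lambda_j$; were every $\lambda_j>0$, the right-hand side would be a sum of positive elements of $\FF$ (using $\epsilon_j^{k}>0$ for $k\ge1$), forcing $w_q>0$ — a contradiction. Hence $c(v_1,\dots,v_n)(w)=0=\chi_{(\RR^+)^n}(w)$ in this case. If instead $w\in(\RR^+)^n$, Cramer's rule gives $\lambda_p=\det(M_p)/\det(M)$ where $M_p$ is $M$ with its $p$-th column replaced by $w$; expanding $\det(M_p)$ as before, its only degree-zero contribution comes from the identity permutation and equals $w_p$, so $\det(M_p)=w_p+(\text{monomials}\prec1)$ has leading coefficient $w_p>0$. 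Since leading coefficients multiply and $\det M$ has leading coefficient $1$, each $\lambda_p$ has leading coefficient $w_p>0$ and hence $\lambda_p>0$; so $c(v_1,\dots,v_n)(w)=\sign\det(v_1,\dots,v_n)=1=\chi_{(\RR^+)^n}(w)$. As these two cases exhaust $\RR^n\setminus\{0\}$, this proves $\sigma_n^{Sh}(1,\rho,\dots,\rho^{n-1})=\chi_{(\RR^+)^n}$.

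The only genuinely delicate point is the ordered-field bookkeeping for $\FF=\RR((\epsilon_1))\cdots((\epsilon_n))$: one must check that under the chosen lexicographic ordering each $\epsilon_j^{k}$ with $k\ge 1$ is positive and infinitesimal relative to $\RR^\times$, that $1$ is the $\succ$-largest monomial with nonnegative exponents, and that passing to the leading term respects products and quotients (this last point is what legitimizes the sign computation via Cramer's rule). Granting these, the argument above is just the two short determinant computations.
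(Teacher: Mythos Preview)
Your proof is correct and is genuinely cleaner than the paper's argument. Both proofs set up the same matrix $M$ with $M_{pi}=\epsilon_i^{(p-i)\bmod n}$ and both need to decide, for real $w$, whether $M^{-1}w$ lies in the positive cone of $\FF^n$. The paper attacks this by computing the leading monomials $\xi_{ij}$ of the adjugate $\on{adj}(M)$ entrywise, then for each row writes down the semialgebraic region $R_k\subset\RR^n$ on which $(\on{adj}(M)w)_k>0$, and finally proves $R_1\cap\cdots\cap R_n=(\RR^+)^n$ by a short induction. Your argument bypasses all of this: for $w\notin(\RR^+)^n$ you read the $q$-th row of $M\lambda=w$ directly and use that a finite sum of positive elements in an ordered field is positive; for $w\in(\RR^+)^n$ you use Cramer's rule together with the observation that in the Leibniz expansion of $\det(M_p)$ only $\pi=\mathrm{id}$ contributes a degree-zero monomial (since any non-identity permutation must move some column $i\neq p$, forcing a positive $\epsilon_i$-exponent), so $\det(M_p)=w_p+(\text{lower})$. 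The payoff is that you never need the explicit adjugate or the inductive intersection argument; the cost is only the ordered-field housekeeping you flag at the end, all of which is standard for iterated Laurent series fields (the monomial order is induced by a rank-$n$ valuation, so leading monomials multiply and the sign is determined by the leading coefficient).
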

\begin{proof}
Denote $\sigma_0 = \sigma_n^{Sh} (1, \cdots, \rho^{n-1})$. Let $M = [1 \cdot b(\epsilon_1), \cdots \rho^{n-1} \cdot b(\epsilon_n)]$. Since $\rho^k$ is permutation by shifting $k$ entries, $M_{i+j,j} = \epsilon_j^i$ where the index $i+j$ is modulo $n$. In $\FF = \RR((\epsilon_1)) \cdots ((\epsilon_n))$, denote the leading monomial of an element $a \in \FF$ by $\on{Lead}(a)$. Nonzero value of $\tilde \sigma_0$ is $\sign \det M = 1$ and it remains to find where $\tilde \sigma_0$ is nonzero.

We first simplify the required computation. $\tilde \sigma_0(w)$ is nonzero iff the $v$ such that $Mv=w$ satisfies $v>0$ ($M$ is nonsingular, as shown in \cite{H}). This is again equivalent to $M^{-1}w>0$ (we write $w>0$ if all entries are positive). We have $\sign (M^{-1} w) = \sign (\det (M)) \on{adj}(M) w = (\sign \det M) \sign \on{adj}(M)w$. Also note that we only need to care about leading term of each entry of $\on{adj}(M)$; the equation $\on{adj}(M)w>0$ consists of $n$ equations induced by each row, and as each row equation consists of $n \cdot (n-1)!$ distinct monomials, it suffices to consider the leading terms. 

We now directly compute $\xi_{ij}:= \on{Lead}(\on{adj}(M)_{ij})$:
\begin{align*}
    \xi_{1,1} &= 1 \\
    \xi_{1,i} &= -\epsilon_i^{n-i+1} \text{ if $i \ge 2$}
\end{align*}
and when $k \ge 2$,
\begin{align*}
    \xi_{k,i} &= -\epsilon_i^{k-i} \text{ if $1 \le i \le k-1$} \\
    \xi_{k,k} &= 1 \\
    \xi_{k,i} &= \epsilon_1^{k-1} \epsilon_i^{n-i+1} \text{ if $k+1 \le i \le n$}.
\end{align*}
Thus
\begin{align*}
    & \xi_{1,1} \prec \cdots \prec \xi_{1,n} \\
    & \xi_{k,k} \prec \xi_{k,1} \prec \cdots \prec \xi_{k,k-1} \prec \xi_{k,k+1} \prec \cdots \prec \xi_{k,n}.
\end{align*}

Denoting $S_\pm (i_1, \cdots i_k) = \{w_{i_1} = \cdots = w_{i_{k-1}}=0, \pm w_{i_k} >0\}$, the row equations give:
\begin{align*}
    \text{$1$st row: } R_1 &= S_+(1) \cup  \bigcup_{i=2}^n S_-(1, \cdots i) \\
    \text{$1<k$th row: } R_k &= S_+(k) \cup \bigcup_{i=1}^{k-1} S_-(k, 1, \cdots i) \cup \bigcup_{i=k+1}^n S_+(1, \cdots i).
\end{align*}
Our claim is that 
\begin{align*}
    R_1 \cap \cdots \cap R_n &= S_+(1) \cap \cdots \cap S_+(n) = (\RR^+)^n.
\end{align*}
We can prove this inductively by showing $R_1 \cap \cdots \cap R_k = S_+(1) \cap \cdots \cap S_+(k)$ for $k\ge 2$. The base case $k=2$ is not hard to check. To prove the statement for $k+1$ from $k$, note that at least one of the first $k$ coordinates of $R_{k+1} \backslash S_+(k+1)$ is non-positive, which has no intersection with $S_+(1) \cap \cdots \cap S_+(k)$.
\end{proof}

\begin{defn}\label{nsh}
Define a function $\Phi_n^{NSh},$ which we call the Naive Shintani function:
\begin{eqnarray*}
&\Phi_n^{NSh}  : (\GL_n \Q)^n \rightarrow \on{Dist} ( \Q^n, \bC (( T_1, \cdots T_n )) ) \\
 &\Phi_n^{NSh}(\gamma_1, \cdots \gamma_n)(f) = \langle \chi_{\RR^+ e_1 \gamma_1^T + \cdots + \RR^+ e_1 \gamma_n^T }, f \rangle_{SH},
\end{eqnarray*}
so that 
$$
  \Phi_n^{Sh} (1, \rho, \cdots \rho^{n-1}) =   \Phi_n^{NSh} (1, \rho, \cdots \rho^{n-1}).
$$
\end{defn}

We emphasize that the Shintani cocycle develops machinery (such as \ref{sign}) to replace the cone function $\chi_{\bR^+ e_1 \gamma_1^T + \cdots + \bR^+ e_1 \gamma_n^T}$ into a cone function that includes faces of the cone. Such efforts make sure that the map thus defined is indeed a homogeneous group cocycle. Our naive Shintani cocycle is not a cocycle by itself.


\subsection{Comparison between the Naive Shintani cocycle and the Stevens cocycle}\label{sec3.3}

Here we give a comparison result between the naive Shintani cocycle and the Stevens cocycle. 
In order to compare the naive Shintani cocycle to the Stevens cocycle, we recall the dlog map. First define:
\begin{eqnarray*}
\dlog : & \cR (V)^\times  & \rightarrow \mathbb C((z_1 , \cdots z_n)) \mathrm{d} z_1 \oplus \cdots \oplus \mathbb C((z_1 , \cdots z_n)) \mathrm{d} z_n \\
 & f & \mapsto \frac1f (\frac{\partial f}{\partial z_1} \mathrm{d}z_1 + \cdots + \frac{\partial f}{\partial z_n} \mathrm{d}z_n).
\end{eqnarray*}

The dlog map satisfies $\dlog (fg) = \dlog (f) + \dlog (g)$ and thanks to this we can also define the wedged dlog map (with the same notation):
\begin{eqnarray*}
{\dlog}: & \cR(V)^\times \otimes \cdots \otimes \cR(V)^\times & \rightarrow \mathbb C((z_1, \cdots z_n)) \mathrm{d} z_1 \wedge \cdots \wedge \mathrm{d} z_n \\
 & f_1 \otimes \cdots \otimes f_n & \mapsto \dlog f_1 \wedge \cdots \wedge \dlog f_n.
\end{eqnarray*}

Furthermore it can be easily checked that the wedged dlog map 
factors through $K_n^M(V)$:
\begin{eqnarray}\label{dlo}
\begin{split}
{\dlog}: & K_n^M(V) & \rightarrow & \ \mathbb C((z_1, \cdots z_n)) \mathrm{d} z_1 \wedge \cdots \wedge \mathrm{d} z_n \\
& \{f_1, \cdots, f_n\} & \mapsto &\  \dlog f_1 \wedge \cdots \wedge \dlog f_n.
\end{split}
\end{eqnarray}
and becomes an additive homomorphism:
\begin{eqnarray*}
{\dlog}(  \{x_1, x_2, \cdots, x_n\} +\{x_1', x_2, \cdots, x_n  \}) 
&=& \wedge^n (\{x_1 \cdot x_1', x_2, \cdots, x_n\}) \\
&=&\left(\dlog(x_1) + \dlog(x_1') \right) \wedge \dlog(x_2)\wedge \cdots \wedge \dlog(x_n) \\
&=& {\dlog}(  \{x_1, x_2, \cdots, x_n\}) +{\dlog}(\{x_1', x_2, \cdots, x_n  \}). 
\end{eqnarray*}

Let us define 
\begin{align}\label{jnp}
J_n' := {\dlog}\left( \tilde J_n(V) \right)
\end{align}
where $\tilde J_n(V)$ was given in Definition \ref{targetk}.
Then we get an additive homomorphism (with the same notation ${\dlog}$)
\begin{eqnarray*}
{\dlog}: \tilde K_n^M(V)=K_n^M(V)/\tilde J_n(V) & \rightarrow \mathbb C((z_1, \cdots z_n)) \mathrm{d} z_1 \wedge \cdots \wedge \mathrm{d} z_n / J_n'. 
\end{eqnarray*}
We now give our comparison result:
\begin{thm}\label{EisNSh}
For given $(\gamma_1, \cdots, \gamma_n) \in \GL_n (\Q)^n$ and $f \in \cS (\Q^n)$, 
\begin{eqnarray*}
{\dlog}( \Phi_n^{St}(\gamma_1, \cdots, \gamma_n)(f)) \equiv (-1)^n \cdot (\Phi_n^{NSh} (\gamma_1, \cdots, \gamma_n)(\hat f)) \mathrm{d} T_1 \wedge \cdots \wedge \mathrm{d} T_n  \quad (\operatorname{mod} J_n'),
\end{eqnarray*}
 under the identification $T_j = 2\pi i z_j$, $j=1, \cdots, n$.
\end{thm}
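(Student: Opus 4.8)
The plan is to reduce the asserted congruence to the rank-one case by $\GL_n(\bQ)$-equivariance together with a multiplicativity argument, and then to close the rank-one case by an explicit computation of $\dlog$ on the multiplicative Kubota--Leopoldt distribution $\eta$. \textbf{Step 1 (reduction to the standard simplex).} By Theorem~\ref{mt} the Stevens cochain $\xi_n^{St}$ is $\GL_n(\bQ)$-invariant, and the defining formula \eqref{kd} expresses $\xi_n^{St}([\lambda_0,\dots,\lambda_{n-1}])(f)$, for any basis $(\lambda_0,\dots,\lambda_{n-1})$ of $V^*$, as $\bigl(\xi_n^{St}([e_1^*,\dots,e_n^*])(\mu\cdot f)\bigr)|_\mu$ where $\mu\in\GL_n(\bQ)$ carries $(\lambda_0,\dots,\lambda_{n-1})$ to $(e_1^*,\dots,e_n^*)$. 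Taking $\lambda_i=\gamma_{i+1}\cdot e_1^*$ (we treat the generic tuples for which this is a basis of $V^*$, which is the substantive case), and using that $\dlog$ is functorial --- hence commutes with the pullback action $|_\mu$ --- and sends $\tilde J_n(V)$ onto $J_n'$ by \eqref{jnp}, so that it descends to $\tilde K_n^M(V)\to\bC((T_1,\dots,T_n))\,dT_1\wedge\cdots\wedge dT_n/J_n'$, we see that the left-hand side of Theorem~\ref{EisNSh} for $(\gamma_1,\dots,\gamma_n)$ is obtained from that for $(1,\rho,\dots,\rho^{n-1})$, evaluated at $\mu\cdot f$, by applying $|_\mu$. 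On the right-hand side, the $\GL_n(\bQ)$-equivariance of $\Phi_n^{NSh}$ (Proposition~\ref{Sproperty}) combined with the intertwining of the Fourier transform and the group actions established in the appendix (subsection~\ref{sec4.2}) --- which converts translation by $\mu$ into the transpose-type action on the Shintani side --- shows that $\Phi_n^{NSh}(\gamma_1,\dots,\gamma_n)(\hat f)$ is likewise obtained from $\Phi_n^{NSh}(1,\rho,\dots,\rho^{n-1})(\widehat{\mu\cdot f})$. Hence it suffices to prove the identity for $(\gamma_1,\dots,\gamma_n)=(1,\rho,\dots,\rho^{n-1})$ and arbitrary $f$. (When $\det\mu<0$ the rewriting of $\xi_n^{St}$ introduces extra factors $\{-1,\cdots\}\in\tilde J_n(V)$; this is exactly why the comparison is only valid modulo $J_n'$.)

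\textbf{Step 2 (factorization).} For $(\gamma_1,\dots,\gamma_n)=(1,\rho,\dots,\rho^{n-1})$ one has $\Phi_n^{St}(1,\rho,\dots,\rho^{n-1})(f)=\xi_n^{St}([e_1^*,\dots,e_n^*])(f)$, while by Proposition~\ref{rhos} and Definition~\ref{nsh}, $\Phi_n^{NSh}(1,\rho,\dots,\rho^{n-1})(\hat f)=\langle\chi_{(\bR^+)^n},\hat f\rangle_{SH}$. Both sides are $\bZ$-linear in $f$, so we may assume $f=\chi_{\underline a+d\bZ^n}=\chi_{a_1+d\bZ}\otimes\cdots\otimes\chi_{a_n+d\bZ}$. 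Then $\xi_n^{St}([e_1^*,\dots,e_n^*])(f)=\{\eta(\chi_{a_1+d\bZ})(z_1),\dots,\eta(\chi_{a_n+d\bZ})(z_n)\}$, and since the $j$-th slot depends only on $z_j$, formula \eqref{dlo} gives
\[
\dlog\bigl(\xi_n^{St}([e_1^*,\dots,e_n^*])(f)\bigr)=\Bigl(\prod_{j=1}^{n}\frac{\partial_{z_j}\eta(\chi_{a_j+d\bZ})(z_j)}{\eta(\chi_{a_j+d\bZ})(z_j)}\Bigr)\,dz_1\wedge\cdots\wedge dz_n .
\]
On the Shintani side $\widehat{\chi_{\underline a+d\bZ^n}}=\bigotimes_j\widehat{\chi_{a_j+d\bZ}}$, and as the cone $(\bR^+)^n$ and the test function are both products, the Solomon--Hu pairing \eqref{SH} factors: $\langle\chi_{(\bR^+)^n},\widehat{\chi_{\underline a+d\bZ^n}}\rangle_{SH}=\prod_{j=1}^{n}\langle\chi_{\bR^+},\widehat{\chi_{a_j+d\bZ}}\rangle_{SH}$. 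Thus the theorem reduces to the rank-one identity
\[
\frac{\partial_{z}\eta(\chi_{a+d\bZ})(z)}{\eta(\chi_{a+d\bZ})(z)}\,dz=-\,\langle\chi_{\bR^+},\widehat{\chi_{a+d\bZ}}\rangle_{SH}\,dT,\qquad T=2\pi i z,
\]
the $n$-fold product of which supplies both the identification $dz_1\wedge\cdots\wedge dz_n=(2\pi i)^{-n}dT_1\wedge\cdots\wedge dT_n$ and the global sign $(-1)^n$.

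\textbf{Step 3 (the rank-one computation).} Here $\eta(\chi_{a+d\bZ})(z)=1-e\bigl(\frac{z-a}{d}\bigr)$, so the left side of the displayed rank-one identity equals $\frac{-\frac1d\,e^{(T-2\pi i a)/d}}{1-e^{(T-2\pi i a)/d}}\,dT$. For the right side I would plug the finite Fourier expansion of $\widehat{\chi_{a+d\bZ}}$ (appendix, subsection~\ref{sec4.2}) into \eqref{SH} for the cone $\bR^+\!\cdot 1$ with fundamental domain $(0,1]$; the sum over the lattice points of the parallelogram collapses to a geometric series,
\[
\langle\chi_{\bR^+},\widehat{\chi_{a+d\bZ}}\rangle_{SH}=\frac{1}{d\,(1-e^{T})}\sum_{m=1}^{d}\bigl(e^{(T-2\pi i a)/d}\bigr)^{m}=\frac1d\cdot\frac{e^{(T-2\pi i a)/d}}{1-e^{(T-2\pi i a)/d}},
\]
which is exactly $-1$ times the left side; for $a\in\bQ^+\setminus\bZ$ (or $d\notin\bZ$) one first clears denominators via the $\GL_1(\bQ)$-equivariance of $\eta$ and of $\langle\cdot,\cdot\rangle_{SH}$, replacing the generator $1$ by an integer multiple lying in the lattice of periods of $\widehat{\chi_{a+d\bZ}}$. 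Combining Steps~1--3 yields Theorem~\ref{EisNSh}.

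\textbf{Expected main obstacle.} The genuine difficulty is Step~1: making the three $\GL_n(\bQ)$-actions --- on $\cS(\bQ^n)$, on $K_n^M(V)$ via pullback of trigonometric functions, and on $\cL_{\bQ^n}$ --- mutually consistent, so that the Fourier transform visibly turns the ``$\mu^{-1}$''-equivariance of the Stevens side into the ``$\mu^{T}$''-equivariance of the Naive Shintani side; this is the ``key observation'' the introduction alludes to, and keeping the orientations, transposes and the $\{-1\}$-corrections in order is the delicate part. A secondary technical point is the lattice-of-periods bookkeeping forced by \eqref{SH} in Step~3, which makes the cone generators depend on the denominators of $\hat f$.
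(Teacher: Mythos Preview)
Your proposal is correct and follows essentially the same three-stage architecture as the paper: reduce to the standard tuple $(1,\rho,\dots,\rho^{n-1})$ via equivariance, factorize over coordinates, and close with an explicit rank-one computation. The paper packages your Step~1 as a concrete calculation (Lemma~\ref{lemt}) verifying $\dlog\bigl(\Phi_n^{St}(\gamma\rho^0,\dots,\gamma\rho^{n-1})(f)\bigr)=\gamma\cdot\dlog\bigl(\Phi_n^{St}(\rho^0,\dots,\rho^{n-1})(\gamma^{-1}f)\bigr)$ directly, where your account invokes functoriality of $\dlog$; and in Step~3 the paper treats rational $a=a_1/a_2$, $d=d_1/d_2$ head-on rather than clearing denominators. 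One minor inaccuracy worth correcting: the need to work modulo $J_n'$ does not arise from $\{-1,\cdots\}$-factors introduced when $\det\mu<0$---Lemma~\ref{lemt} in the paper is an exact identity of forms---but simply from the fact that $\Phi_n^{St}$ is already valued in the quotient $\tilde K_n^M(V)=K_n^M(V)/\tilde J_n(V)$ (this quotient being needed for the cocycle condition of Theorem~\ref{mt}), so $\dlog$ is only well-defined on it modulo $J_n'$.
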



We need two lemmas for its proof. 
    Recall that $\rho \in \GL_n(\RR)$ is the shift permutation matrix defined by $\rho_{1,n} = 1, \rho_{i+1, i} = 1$ for $i=1, \cdots n-1$ and all other entries zero.
Since $\Phi_n^{St}(\rho^0, \rho^1, \cdots, \rho^{n-1})$ and $\Phi_n^{NSh}(\rho^0, \rho^1, \cdots, \rho^{n-1})$ will play a central role, we give some shorthand notations:
\begin{eqnarray*}
& &\mu_n^{St} = \Phi_n^{St}(\rho^0, \rho^1, \cdots \rho^{n-1}), \quad \mu_n^{NSh} = \Phi_n^{NSh}(\rho^0, \rho^1, \cdots \rho^{n-1}),\\
& & \omega_T = \mathrm{d}T_1 \wedge \cdots \wedge \mathrm{d} T_n, \quad \omega_z = \mathrm{d} z_1 \wedge \cdots \wedge \mathrm{d} z_n.
\end{eqnarray*}

\begin{lem}\label{lemo}
    Under the identification $T_j = 2\pi i z_j$, for any test function $f \in \cS(\Q^n)$,
    \begin{align*}
        {\dlog}(\mu_n^{St}(f)) \equiv (-1)^n \mu_n^{NSh}(\hat f) \omega_T.
    \end{align*}
\end{lem}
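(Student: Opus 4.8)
The plan is to prove Lemma \ref{lemo} by an explicit computation on both sides, using the fact that the cocycle $\mu_n^{St}$ evaluated at $(\rho^0,\dots,\rho^{n-1})$ is already computed (essentially) in the proof of Theorem \ref{mt}, and that $\mu_n^{NSh}$ evaluated at the same tuple is, by Proposition \ref{rhos} together with Definition \ref{nsh}, just the Solomon--Hu pairing against the standard cone $\chi_{(\RR^+)^n}$. Since both sides are $\bZ$-linear in $f$ and respect the factorisation $\cS(\bQ^n)\cong\cS(\bQ)^{\otimes n}$, it suffices to treat a factorisable test function $f=f_1\otimes\cdots\otimes f_n$, and by further linearity a single building block $f=\chi_{a_1+d\bZ}\otimes\cdots\otimes\chi_{a_n+d\bZ}$ with a common denominator $d$.

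First I would compute the left-hand side. By Definition \ref{Eisn} and the formula \ref{kd} (with $\gamma=1$ since $\rho^i\cdot e_1^*=e_{i+1}^*$), we have
$$
\mu_n^{St}(f)=\bigl\{\eta(\chi_{a_1+d\bZ})(z_1),\dots,\eta(\chi_{a_n+d\bZ})(z_n)\bigr\}
=\bigl\{1-e(\tfrac{z_1-a_1}{d}),\dots,1-e(\tfrac{z_n-a_n}{d})\bigr\}.
$$
Applying $\dlog$ from \ref{dlo}, this becomes $\bigwedge_{j=1}^n \dlog\bigl(1-e(\tfrac{z_j-a_j}{d})\bigr)$, and since the $j$-th factor depends only on $z_j$, the wedge is the product of the one-variable logarithmic derivatives times $\omega_z=dz_1\wedge\cdots\wedge dz_n$. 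A direct calculation gives $\dlog\bigl(1-e(\tfrac{z_j-a_j}{d})\bigr)=\tfrac{-2\pi i/d\, e(\tfrac{z_j-a_j}{d})}{1-e(\tfrac{z_j-a_j}{d})}\,dz_j$; summing the geometric series $\tfrac{e(w)}{1-e(w)}=-\sum_{k\ge 1}e(kw)$ (or $\sum_{k\ge 0}$, depending on the expansion regime — one must fix the sign/expansion convention consistent with how $\bC((T_1,\dots,T_n))$ is realised) rewrites each factor as a Laurent series in $e^{z_jT_j/d}$-type monomials. Under $T_j=2\pi i z_j$ and $\omega_z=(2\pi i)^{-n}\omega_T$, I expect the $n$ factors of $-2\pi i/d$ and the $(2\pi i)^{-n}$ to cancel, leaving a clean expression $(-1)^n$ times a product of geometric series times $\omega_T$.

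Next I would compute the right-hand side. By Definition \ref{nsh} and Proposition \ref{rhos}, $\mu_n^{NSh}(\hat f)=\langle\chi_{(\RR^+)^n},\hat f\rangle_{SH}$, which by the formula \ref{SH} equals $\prod_{j=1}^n\tfrac{1}{1-e^{v_jT_j}}\sum_{w\in\cP\cap\bQ^n}\hat f(w)e^{w\cdot T}$ for a suitable choice of cone generators $v_j$ in the period lattice of $\hat f$. Here the key input is the appendix's Fourier theory (subsection \ref{sec4.2}): one must identify $\hat f$ for $f=\bigotimes_j\chi_{a_j+d\bZ}$ — the Fourier transform of a characteristic function of an affine lattice coset is, up to normalisation, a sum of exponentials supported on the dual lattice $\tfrac1d\bZ^n$ with phases $e(a\cdot\xi)$ — and then evaluate the Solomon--Hu sum over the fundamental parallelogram $(0,1]v_1+\cdots+(0,1]v_n$. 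The plan is to pick $v_j=\tfrac1d e_j$ (after checking these lie in the period lattice of $\hat f$), so that the parallelogram contains exactly one lattice point in the relevant sense and the sum collapses to a single product; the resulting expression should match, term by term, the geometric-series expansion of the left-hand side.

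The main obstacle, and where I would spend the most care, is matching conventions: the sign $(-1)^n$ and the precise identification $T_j=2\pi i z_j$ force one to be scrupulous about (i) the direction of the geometric-series expansion used to realise $\tfrac{1}{1-e^{vT}}$ as an element of $\bC((T_1,\dots,T_n))$ versus the expansion of $\dlog(1-e(\cdot))$ coming out of $K$-theory, (ii) the normalisation of the Fourier transform in the appendix (factors of $2\pi i$, the role of the self-dual measure), and (iii) the orientation conventions implicit in writing $\omega_z=dz_1\wedge\cdots\wedge dz_n$ and in the sign of $\det$ used in the cone function \ref{sign}. Once a single factorisable block is matched, the general case follows by the bilinearity already invoked, and the reduction to $(\rho^0,\dots,\rho^{n-1})$ is legitimate because that is exactly the tuple at which $\mu_n^{St}$ and $\mu_n^{NSh}$ were defined to be compared; the passage from this lemma to the full Theorem \ref{EisNSh} will then use the $\GL_n(\bQ)$-equivariance of both cocycles (the stated transformation laws under $\gamma^T$) to transport the identity from $(\rho^0,\dots,\rho^{n-1})$ to an arbitrary tuple, which is presumably the content of the second lemma alluded to.
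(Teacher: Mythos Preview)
Your overall strategy is exactly the paper's: reduce by $\bZ$-linearity to a factorisable test function $f=\chi_{a_1+d\bZ}\otimes\cdots\otimes\chi_{a_n+d\bZ}$, compute the $\dlog$ of the Stevens side directly from \ref{kd}, compute the Solomon--Hu pairing $\langle\chi_{(\RR^+)^n},\hat f\rangle_{SH}$ on the other side, and match the two closed-form expressions under $T_j=2\pi i z_j$. The paper even organises this as ``do $n=1$ first, then take a product'', which is what your factorisation amounts to.

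There is, however, one concrete mistake in your plan for the right-hand side. You propose to take the cone generators $v_j=\tfrac{1}{d}e_j$ so that the fundamental-parallelogram sum collapses to a single term. But $\tfrac{1}{d}$ is \emph{not} in the period lattice of $\hat\chi_{a+d\bZ}(y)=\tfrac{1}{d}e^{-2\pi i ay}\chi_{\frac{1}{d}\bZ}(y)$ unless $a/d\in\bZ$: the characteristic-function factor is $\tfrac{1}{d}\bZ$-periodic, but the phase $e^{-2\pi i ay}$ is not. If you nevertheless plug $v=\tfrac{1}{d}$ into the formula \ref{SH}, you get denominator $1-e^{T/d}$ rather than the correct $1-e^{(T-2\pi i a)/d}$, and the identity fails. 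The paper instead writes $a=a_1/a_2$, $d=d_1/d_2$, takes the legitimate period $v=a_2d_2\in\bZ$, and then the parallelogram $(0,a_2d_2]$ meets the support $\tfrac{1}{d}\bZ$ in $d_1a_2$ points; the resulting sum is a finite geometric series in $\omega=e^{(T-2\pi i a)/d}$ which telescopes to $\tfrac{1}{d}\,\tfrac{\omega}{1-\omega}$, exactly matching $-\dlog(1-e(\tfrac{z-a}{d}))$ under $T=2\pi i z$. Once you make this correction the rest of your outline goes through, and the sign/constant bookkeeping you flag (the $n$ factors of $-2\pi i/d$ against $\omega_z=(2\pi i)^{-n}\omega_T$) is straightforward.
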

\begin{proof}
Let us first prove the 1-dimensional case:
    \begin{align*}
        \dlog \xi_1^{St} ([e_1^* ])(f) \equiv -\Phi_1^{NSh}(1)(\hat{f}) \mathrm{d} T.
    \end{align*}
In general, $\chi_{a\ZZ} = \sum_{j=0}^{b-1} \chi_{ja + ab \ZZ}$ holds. For $a = \frac{a_1}{a_2}, d=\frac{d_1}{d_2}$ where $a_j, d_j \in \ZZ$,
    \begin{align*}
        \hat{\chi}_{a+d\ZZ}(y) =& \frac1d e^{-2\pi i a y} \chi_{\frac1d \ZZ} (y)
        = \frac{1}{d} \sum_{j=0}^{d_1 a_2 - 1} e^{-2\pi i ay} \chi_{\frac{j}{d}+a_2 d_2 \ZZ}(y)
        = \frac 1d \sum_{j=0}^{d_1 a_2 - 1} e^{-2\pi i \frac ad j} \chi_{\frac jd + a_2 d_2 \ZZ}(y).
    \end{align*}
    Denote $(a)' := \chi_\ZZ(a) + \{ a \} = \chi_\ZZ(a) + a - \lfloor a \rfloor$. For a test function of the form $\chi_{a+d\ZZ}$, we obtain the following comparison:
\begin{align*}
\xi_1^{St} (\langle e_1^*\rangle ) (\chi_{a+d\ZZ}) =& \{ 1 - e^{2\pi i \frac{z-a}{d} } \} \implies \dlog \xi_1^{St} (\langle e_1^* \rangle )(\chi_{a+d\ZZ}) = \frac{2\pi i }{d} \frac{-e^{2\pi i \frac{z-a}{d}}}{1-e^{2\pi i \frac{z-a}{d}}} \mathrm{d} z = -\frac{1}{d} \frac{e^{2\pi i \frac{z-a}{d}}}{1-e^{2\pi i \frac{z-a}{d}}} \mathrm{d} T
\end{align*}
and
    \begin{align*}
        \Phi_1^{NSh}(1)(\hat{\chi}_{a+d\ZZ}) =& \langle \chi_{\RR^+}, \hat{\chi}_{a+d\ZZ} \rangle \\
        =& \frac1d \sum_{j=0}^{d_1 a_2 - 1} e^{-2\pi i \frac ad j} \langle \chi_{\RR^+},  \chi_{\frac jd + a_2 d_2 \ZZ} \rangle \\
        =& \frac1d \sum_{j=0}^{d_1 a_2 - 1 } e^{-2\pi i \frac ad j}\frac{e^{(\frac j{d a_2 d_2})'\cdot a_2 d_2 T}}{1-e^{a_2 d_2 T}} \\
        =& \frac1{d(1-e^{a_2 d_2 T})} \sum_{j=0}^{d_1 a_2 - 1} e^{(\frac j{d_1 a_2})'\cdot a_2 d_2 T - 2\pi i \frac ad j} \\
        =& \frac1{d(1-e^{a_2 d_2 T})} \left( e^{a_2 d_2 T} + \sum_{j=1}^{d_1 a_2 - 1} e^{ \frac jd \cdot T - 2\pi i \frac ad j} \right) \\
        =& \frac1{d(1-e^{a_2 d_2 T})} \left( e^{a_2 d_2 T} + \sum_{j=1}^{d_1 a_2 - 1} \omega^j \right) \text{ (where $\omega = e^{\frac{T-2\pi i a}{d}}$)} \\
        =& \frac1{d(1-\omega^{d_1 a_2})} \left( \omega^{d_1 a_2} + \frac{\omega^{d_1 a_2} - \omega}{\omega - 1} \right) \\
        =& \frac1{d(1-\omega^{d_1 a_2})} \frac{\omega^{d_1 a_2 + 1} - \omega}{\omega - 1} \\
        =& \frac1d \frac \omega{1-\omega} \\
        =& \frac1d \frac{e^{\frac{T-2\pi i a}{d}}}{1-e^{\frac{T-2\pi i a}{d}}}.
    \end{align*}
    So for any test function $f$, by decomposing it into functions of the form $\chi_{a+d\ZZ}$, we see that
    \begin{align*}
        \dlog \xi(\langle e_1^* \rangle)(f) = \frac1d \frac{e^{2\pi i \frac{z-a}d }}{1-e^{2\pi i \frac{z-a}d }} (2\pi i \mathrm{d} z) = - \frac1d \frac{e^{\frac{T-2\pi i a}d }}{e^{\frac{T-2\pi i a}d }-1} \mathrm{d} T = -\Phi_{Sh}(1)(\hat{f}) \mathrm{d} T.
    \end{align*}
This proves the 1-dimensional case. More generally for $f = \chi_{a_1 + d_1 \ZZ} \otimes \cdots \otimes \chi_{a_n + d_n \ZZ}$,
    \begin{align*}
        \xi_n^{St} ([ e_1^* , \cdots, e_n^* ])(f) =& \{1-e^{2\pi i \frac{z_1-a_1}{d_1}}, \cdots, 1-e^{2\pi i \frac{z_n-a_n}{d_n}}\} \\
        \implies {\dlog}\xi_n^{St} ([ e_1^* , \cdots, e_n^* ])(f) =& \prod_{j=1}^n \frac1{d_j} \frac{-e^{2\pi i \frac{z_j-a_j}{d_j}}}{1-e^{2\pi i \frac{z_j-a_j}{d_j}}} (2\pi i \mathrm{d} z_1) \wedge \cdots \wedge (2\pi i \mathrm{d} z_n) \\
        =& (-1)^n \prod_{j=1}^n \frac1{d_j} \frac{e^{\frac{T_j-2\pi i a_j}{d_j}}}{1-e^{\frac{T_j-2\pi i a_j}{d_j}}} \omega_T \\
        =& (-1)^n \prod_{j=1}^n \Phi_1^{NSh}(1)(\hat{\chi}_{a_j + d_j \ZZ}) \omega_T \\
        =& (-1)^n \Phi_n^{NSh}(1, \rho, \cdots, \rho^{n-1})(\hat{\chi}_{a_1 + d_1 \ZZ} \otimes \cdots \otimes \hat{\chi}_{a_n + d_n \ZZ}) \omega_T \\
        =& (-1)^n \Phi_n^{NSh}(1, \rho, \cdots, \rho^{n-1})(\hat f) \omega_T
    \end{align*}
where we used $\cF(\chi_{a_1 + d_1 \ZZ} \otimes \cdots \otimes \chi_{a_n + d_n \ZZ}) = \hat{\chi}_{a_1 + d_1 \ZZ} \otimes \cdots \otimes \hat{\chi}_{a_n + d_n \ZZ}$. By writing arbitrary test function as a sum of functions of the form $\chi_{a_1 + d_1 \ZZ} \otimes \cdots \otimes \chi_{a_n + d_n \ZZ}$, we get the desired result.
\end{proof}

\begin{lem} \label{lemt}
For any $\g \in \GL_n(\bQ)$, we have
    $${\dlog}\Phi_n^{St}(\gamma \rho^0, \cdots, \gamma \rho^{n-1})(f) = \gamma \cdot {\dlog}\Phi_n^{St} (\rho^0, \cdots, \rho^{n-1})(\gamma^{-1} \cdot f).$$
\end{lem}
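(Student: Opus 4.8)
The plan is to derive the identity formally, using only the $\GL_n(\bQ)$-invariance of the cochain $\xi_n^{St}$ from Theorem \ref{mt} and the $\GL_n(\bQ)$-equivariance of the wedged $\dlog$ map \ref{dlo}; no explicit computation with the functions $\eta$ enters. Recall that $\Phi_n^{St}(\gamma_1,\cdots,\gamma_n)(f) = \xi_n^{St}([\gamma_1\cdot e_1^*,\cdots,\gamma_n\cdot e_1^*])(f)$ and that $\GL(V)$ acts on $V^*$, hence on $C_{n-1}(V)$, on the left.

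First I would move $\gamma$ outside the simplex: since $(\gamma\rho^i)\cdot e_1^* = \gamma\cdot(\rho^i\cdot e_1^*)$, one has $[\gamma\rho^0 e_1^*,\cdots,\gamma\rho^{n-1}e_1^*] = \gamma\cdot[\rho^0 e_1^*,\cdots,\rho^{n-1}e_1^*]$ in $C_{n-1}(V)$, and this tuple still lies in $(V^*)^n_{ng}$ because $\gamma$ preserves linear independence. The invariance $\xi_n^{St}|_\gamma = \xi_n^{St}$ of Theorem \ref{mt} unwinds to $\xi_n^{St}(\gamma\cdot[\lambda]) = \xi_n^{St}([\lambda])|_{\gamma^{-1}}$ in $\operatorname{Dist}(V,\tilde K_n^M(V))$, so applying it to $[\lambda] = [\rho^0 e_1^*,\cdots,\rho^{n-1}e_1^*]$ gives $\Phi_n^{St}(\gamma\rho^0,\cdots,\gamma\rho^{n-1})(f) = (\mu_n^{St}|_{\gamma^{-1}})(f)$. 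Next I would unwind the action on distributions: by the defining formula \ref{kd}, the right $\GL(V)$-action on $\operatorname{Dist}(V,\tilde K_n^M(V))$ is $(\mu|_\delta)(f) = \mu(\delta\cdot f)|_\delta$, so with $\delta = \gamma^{-1}$ this becomes $\Phi_n^{St}(\gamma\rho^0,\cdots,\gamma\rho^{n-1})(f) = \Phi_n^{St}(\rho^0,\cdots,\rho^{n-1})(\gamma^{-1}\cdot f)\,|_{\gamma^{-1}}$, where now the outermost $|_{\gamma^{-1}}$ is the action on $\tilde K_n^M(V)$. Finally, applying $\dlog$ and using its equivariance — under which the right action $|_{\gamma^{-1}}$ on $\tilde K_n^M(V)$ is carried to the left action $\gamma\cdot$ on $\bC((z_1,\cdots,z_n))\omega_z/J_n'$ — yields exactly the asserted equality.

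The step I expect to require the most care is the last one, namely the $\GL_n(\bQ)$-equivariance of $\dlog$. For this I would verify: that the $\GL(V)$-action on $K_n^M(V)$ is the componentwise action induced from the substitution action of $\GL(V)$ on $\cR(V)$ in the variables $z_1,\cdots,z_n$; that $\dlog$ of a componentwise-substituted symbol is the pullback of the top form $\dlog f_1\wedge\cdots\wedge\dlog f_n$ under the corresponding change of variables, which is just naturality of $d$ and of the wedge, so that no Jacobian obstruction arises; and that this action preserves the ideal $\tilde J_n(V)$ of Definition \ref{targetk}, hence descends to $J_n'$, which holds because $\cE(V)$, $\cC(V)$ and the element $\{-1\}$ are all $\GL(V)$-stable. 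What remains is purely bookkeeping: matching up which matrix ($\gamma$, $\gamma^{T}$, or $\gamma^{-1}$) implements each of the several actions in play — on $V^*$, on $\cS(V)$, on $\cR(V)$, and on differential forms — according to the conventions fixed in the appendix, subsection \ref{sec4.1}. That is precisely where a sign or a transpose could slip in, so I would keep the chain of identifications above fully explicit throughout.
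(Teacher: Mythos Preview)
Your argument is correct and follows the same skeleton as the paper's proof: both use the $\GL(V)$-invariance $\xi_n^{St}|_\gamma = \xi_n^{St}$ to rewrite $\Phi_n^{St}(\gamma\rho^0,\cdots,\gamma\rho^{n-1})(f)$ as $\mu_n^{St}(\gamma^{-1}\cdot f)\,|_{\gamma^{-1}}$ in $\tilde K_n^M(V)$, and then compare $\dlog$ of this with $\gamma\cdot\dlog(\mu_n^{St}(\gamma^{-1}\cdot f))$. The only difference is in how the last step is carried out: the paper sets $\gamma^{-1}\cdot f = \chi_{a+d\bZ^n}$, writes down the explicit symbol $\{1-e^{2\pi i(z\cdot\gamma^{(j)}-a_j)/d}\}_j$, and checks by direct computation that applying $\dlog$ produces the same top form (with the factor $\det\gamma$) as first applying $\dlog$ and then acting by $\gamma$; you instead invoke the general equivariance $\dlog(X|_{\gamma^{-1}}) = \gamma\cdot\dlog(X)$, which amounts to the naturality of pullback under the linear change of variables $z\mapsto z\gamma$. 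Your route is cleaner and makes the role of the Jacobian $\det\gamma$ transparent, while the paper's explicit computation has the virtue of showing the identity holds already at the level of representatives in $K_n^M(V)$, not merely modulo $J_n'$.
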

\begin{proof}
    Let $\gamma^{-1} \cdot f = \chi_{a + d\ZZ^n}$. Denote columns of $\gamma$ by $\gamma^{(1)}, \cdots, \gamma^{(n)}$. Then
    \begin{align*}
 &       {\dlog}\Phi_n^{St}(\gamma \rho^0, \cdots, \gamma \rho^{n-1})(f)\\ =& {\dlog}\left[ (\mu_n^{St}|_{\gamma^{-1}} )(f) \right] \\
        =& {\dlog}\left[ \mu_n^{St}(\gamma^{-1} \cdot f)|_{\gamma^{-1}} \right] \\
        =& {\dlog}\left[ \left\{ 1-e^{2\pi i \frac{z_1 - a_1}{d}} , \cdots, 1-e^{2\pi i \frac{z_n - a_n}{d}} \right\}|_{\gamma^{-1}} \right] \\
        =& {\dlog}\left[ \left\{ 1-e^{2 \pi i \frac{z\cdot \gamma^{(1)} - a_1}{d}}, \cdots, 1-e^{2\pi i \frac{z\cdot \gamma^{(n)} - a_n}{d}} \right\} \right] \\
        =& \frac{-e^{2\pi i \frac{z\cdot \gamma^{(1)} - a_1}d}}{1-e^{2\pi i \frac{z \cdot \gamma^{(1)} - a_1}d}} \cdots \frac{-e^{2\pi i \frac{z\cdot \gamma^{(n)} - a_n}d}}{1-e^{2\pi i \frac{z \cdot \gamma^{(n)} - a_n}d}} \frac{(2\pi i )\mathrm{d} z \cdot \gamma^{(1)}}{d} \wedge \cdots \wedge \frac{(2\pi i ) \mathrm{d} z \cdot \gamma^{(n)}}{d} \\
        =& \frac{-e^{2\pi i \frac{z\cdot \gamma^{(1)} - a_1}d}}{1-e^{2\pi i \frac{z \cdot \gamma^{(1)} - a_1}d}} \cdots \frac{-e^{2\pi i \frac{z\cdot \gamma^{(n)} - a_n}d}}{1-e^{2\pi i \frac{z \cdot \gamma^{(n)} - a_n}d}} \frac1{d^n} (\det \gamma) (2\pi i )\mathrm{d} z_1 \wedge \cdots \wedge (2\pi i )\mathrm{d} z_n
    \end{align*}
    and
    \begin{align*}
   &     \gamma \cdot {\dlog}\Phi_n^{St} (\rho^0, \cdots, \rho^{n-1})(\gamma^{-1} \cdot f) \\
        =& \gamma \cdot {\dlog}[ \mu_n^{St} (\gamma^{-1} \cdot f) ] \\
        =& \gamma \cdot {\dlog}\left( \left\{ 1 - e^{2\pi i \frac{z_1-a_1}d}, \cdots, 1 - e^{2\pi i \frac{z_n-a_n}d} \right \} \right) \\
        =& \gamma \cdot \left( \frac{-e^{2\pi i \frac{z_1 - a_1}d }}{1-e^{2\pi i \frac{z_1 - a_1}d}} \cdots \frac{-e^{2\pi i \frac{z_n - a_n}d} }{1-e^{2\pi i \frac{z_n - a_n}d}} \frac1{d^n} (2\pi i ) \mathrm{d} z_1 \wedge \cdots \wedge (2\pi i ) \mathrm{d} z_n \right) \\
        =& \frac{-e^{2\pi i \frac{z\cdot \gamma^{(1)} - a_1}d}}{1-e^{2\pi i \frac{z \cdot \gamma^{(1)} - a_1}d}} \cdots \frac{-e^{2\pi i \frac{z\cdot \gamma^{(n)} - a_n}d}}{1-e^{2\pi i \frac{z \cdot \gamma^{(n)} - a_n}d}} \frac1{d^n} (\det \gamma) (2\pi i )\mathrm{d} z_1 \wedge \cdots \wedge (2\pi i ) \mathrm{d} z_n.
    \end{align*}
Thus the two are equal.
\end{proof}

\begin{proof} (of Theorem~\ref{EisNSh})
Let $\gamma$ be the matrix defined by having its $j$th column equal to the first column of $\gamma_j$.  
Because $\forall j, \gamma_j \cdot e_1^* = \gamma \rho^j \cdot e_1^*$ and $\forall j, e_1\cdot \gamma_j^T = e_1 \cdot (\g\rho^{j})^T$, we have
\begin{eqnarray}\label{idy}
\begin{split}
& \Phi_n^{St}(\gamma_1, \cdots, \gamma_n)(f) = \Phi_n^{St}(\gamma \rho^0 , \cdots, \gamma \rho^{n-1})(f) \\
& \Phi_n^{NSh}(\gamma_1, \cdots, \gamma_n)(f) = \Phi_n^{NSh}(\gamma \rho^0 , \cdots, \gamma \rho^{n-1})(f).
\end{split}
\end{eqnarray}

Therefore,
\begin{eqnarray*}
&& {\dlog}(\Phi_n^{St}(\gamma_1, \cdots \gamma_n)(f)) \\
&=& {\dlog}(\Phi_n^{St} (\gamma \rho^0, \cdots \gamma \rho^{n-1})(f)) \\
&=& \gamma \cdot {\dlog}(\mu_n^{St}(\gamma^{-1} \cdot f))\quad (\text{by Lemma \ref{lemt}}) \\
&=& \gamma \cdot ((-1)^n \cdot \mu_n^{NSh} (\widehat{\gamma^{-1} \cdot f}) \cdot \omega_T ) \quad (\text{by Lemma \ref{lemo}})\\
&=& (-1)^n \cdot \gamma \cdot (\mu_n^{NSh} (|\det \gamma|^{-1} \cdot \gamma^T \cdot \hat f ) \cdot \omega_T )\quad(\text{by Proposition \ref{kpf}}) \\
&=& (-1)^n |\det \gamma|^{-1} \cdot \gamma \cdot (\mu_n^{NSh}(\gamma^T \cdot \hat f) \cdot \omega_T ) \\
&=& (-1)^n |\det \gamma|^{-1} \cdot (\det \gamma) \cdot (\gamma \cdot \mu_n^{NSh} (\gamma^T \cdot \hat f ) ) \cdot \omega_T \\
&=& (-1)^n (\on{sign} \gamma) (\gamma \cdot \mu_n^{NSh} (\gamma^T \cdot \hat f ) ) \cdot \omega_T \\
&=& (-1)^n \mu_n^{NSh}|_{\gamma^{T}} (\hat f) \cdot \omega_T   \quad (\text{by definition of group action on measures})\\
&=& (-1)^n \Phi_n^{NSh}(\gamma \rho^0, \cdots,\gamma \rho^{n-1})(\hat f) \cdot \omega_T   \quad (\text{by Proposition \ref{Sproperty}})\\
&=& (-1)^n \Phi_n^{NSh}(\gamma_1, \cdots, \gamma_n)(\hat f) \cdot \omega_T \quad ( \text{by \ref{idy}}).
\end{eqnarray*}
\end{proof}


\section{Appendix}\label{sec4}

%

\subsection{Appendix I: A Summary of Group Actions}
\label{sec4.1}

There are several $\GL(V)$-actions in this article. For reader's convenience, we summarize our convention for group actions. We use a left action for test functions and a right action for distributions with values in any right $\GL(V)$-module $M$:
\begin{eqnarray*}
\GL(V) \times \cS(V) &\to& \cS(V) \\
(\g, f) &\mapsto & \g \cdot f, \quad (\g \cdot f)(x):= f(x\cdot \g),
\end{eqnarray*}
\begin{eqnarray*}
 \Hom(\cS(V),M)\times \GL(V) &\to& \Hom(\cS(V),M) \\
(\mu, \g) &\mapsto &  \mu|_\g, \quad (\mu|_\g)(f):= \mu(\g \cdot f)|_\g.
\end{eqnarray*}
If $M$ is a left $\GL(V)$-module, then we instead use the action convention:
\begin{eqnarray}\label{lac}
(\mu|_\g)(f):=\g^T \cdot \left(\mu(\g \cdot f)\right).
\end{eqnarray}
We use a right action for trigonometric functions and relevant $K$-groups:
\begin{eqnarray*}
 \cR(V)\times \GL(V) &\to& \cR(V) \\
(F,\g) &\mapsto & F|_\g, \quad (F|_\g)(x):= F(x\cdot \g^{-1}),
\end{eqnarray*}\begin{eqnarray*}
 K_n^M(V) \times \GL(V) &\to& K_n^M(V) \\
(\{F_1, \cdots, F_n\},\g) &\mapsto & \{F_1, \cdots, F_n\}|_\g, \quad (\{F_1, \cdots, F_n\}|_\g:= \{F_1|_\g, \cdots, F_n|_\g\}.
\end{eqnarray*}

%

We use a left action on a dual vector space $V^*$ and the free abelian group $C_k(V)$ generated by $k$-simplexes:
\begin{eqnarray*}
\GL(V) \times V^* &\to& V^*:=\Hom(V, \bQ) \\
(\g, \l) &\mapsto & \g \cdot \l, \quad (\g \cdot \l)(x):= \l(x \cdot \g).
\end{eqnarray*}
\begin{eqnarray*}
\GL(V) \times C_k(V) &\to& C_k(V) \\
(\g, [\l_0, \cdots, \l_{k}]) &\mapsto & \g \cdot [\l_0, \cdots, \l_{k}], \quad \g \cdot [\l_0, \cdots, \l_{k}]:= [\g\cdot \l_0, \cdots, \g \cdot \l_{k}].
\end{eqnarray*}
Finally, we use a right action on $\Hom(C_k(V),M)$ for any right $\GL(V)$-module $M$:
\begin{eqnarray*}
 \Hom(C_k(V),M)\times \GL(V) &\to& \Hom(C_k(V),M) \\
(\xi, \g) &\mapsto &  \xi|_\g, \quad (\xi|_\g)([\l]):= \mu(\g \cdot [\l])|_\g.
\end{eqnarray*}

Let $\GL_n(\Q)$ act on $\bC((T_1, \cdots T_n ))$ and $\bC((T_1, \cdots T_n ))\omega_T$ on the left (where $\omega_T = dT_1 \wedge \cdots \wedge dT_n$) by 
\begin{align*}
& \gamma \cdot f =f((T_1, \cdots, T_n)\cdot \gamma)
= f(T \cdot \gamma^{(1)}, \cdots T \cdot \gamma^{(n)}) \\
& \gamma \cdot (f \omega_T) =f((T_1, \cdots, T_n)\cdot \gamma) (\det \gamma) \omega_T 
= f(T \cdot \gamma^{(1)}, \cdots T \cdot \gamma^{(n)}) (\det \gamma) \omega_T
\end{align*}
where $T=(T_1, \cdots, T_n)$ and $\gamma^{(j)}$ is the $j$th column of $\gamma$. 

The group $\GL(V)$ acts on $\cL_\bQ$ (the abelian group generated by characteristic functions of open rational cones in $\bF^n$, modulo constant functions) on the left:
\begin{eqnarray}\label{sgc}
\begin{split}
 \GL(V) \times \cL_\bQ &\to \cL_\bQ & \\
(\g,c) &\mapsto   (\g \cdot c)(x):= \sign{\g} \cdot c (x \cdot \g),&
\end{split}
\end{eqnarray}
where we use the embedding $\bQ^n \subset \bR^n \subset \bF^n$ given in \cite{H}.

\begin{prop} \label{Sproperty}
The Shintani cocycle satisfies the following $\GL_n(\bQ)$-equivariance:
    \begin{align*}
        \Phi_n^{Sh}(\gamma \gamma_1 ,\cdots \gamma \gamma_n) = (\sign \gamma) \Phi_n^{Sh}(\gamma_1, \cdots \gamma_n) |_{\gamma^{T}}, \quad   
        \Phi_n^{NSh}(\gamma \gamma_1 ,\cdots \gamma \gamma_n) = \Phi_n^{NSh}(\gamma_1, \cdots \gamma_n) |_{\gamma^{T}}.
    \end{align*}
\end{prop}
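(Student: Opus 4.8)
The plan is to exploit that $\Phi_n^{Sh}$ and $\Phi_n^{NSh}$ are both of the form $\langle c(\gamma_1,\cdots,\gamma_n),\cdot\rangle_{SH}$ for a cone-valued function $c$, and to reduce both equivariances to one functoriality property of the Solomon--Hu pairing, read off from the explicit formula \ref{SH}: for a rational cone $C$ and $\delta\in\GL_n(\bQ)$, writing $\delta C$ for the cone whose spanning vectors are those of $C$ left-multiplied by $\delta$,
\begin{align*}
\langle\chi_{\delta C},\,f\rangle_{SH}=\delta\cdot\langle\chi_{C},\,\delta^{T}\cdot f\rangle_{SH},
\end{align*}
where $\delta\cdot(-)$ is the left action of $\GL_n(\bQ)$ on $\bC((T_1,\cdots,T_n))$ and $\delta^{T}\cdot(-)$ the left action on test functions. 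I would prove this by the substitution $w\mapsto\delta w$ in the lattice sum of \ref{SH} (a bijection of $\cP_{C,\,\delta^{T}\cdot f}\cap\bQ^n$ onto $\cP_{\delta C,\,f}\cap\bQ^n$, since $\delta$ is rational) together with the elementary identity $u\cdot(T\delta)=(u\delta^{T})\cdot T$, which makes the prefactors $(1-e^{v_j\cdot T})^{-1}$ and the exponentials $e^{w\cdot T}$ transform correctly; well-definedness of $\langle\cdot,\cdot\rangle_{SH}$ shows neither side depends on the auxiliary scalings of spanning vectors into the period lattice.

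For the naive function this is essentially the whole argument. Since $(\gamma\gamma_i)^{T}=\gamma_i^{T}\gamma^{T}$, under $\gamma_i\mapsto\gamma\gamma_i$ the spanning vectors $e_1\gamma_i^{T}$ of the defining cone of $\Phi_n^{NSh}(\gamma_1,\cdots,\gamma_n)$ become $e_1\gamma_i^{T}\gamma^{T}$, i.e.\ — under the transpose identification $e_1\gamma_i^{T}\leftrightarrow\gamma_i e_1^{T}$ — they get left-multiplied by $\gamma$; so the displayed identity applies with $\delta=\gamma$ and gives
\begin{align*}
\Phi_n^{NSh}(\gamma\gamma_1,\cdots,\gamma\gamma_n)(f)=\gamma\cdot\Phi_n^{NSh}(\gamma_1,\cdots,\gamma_n)(\gamma^{T}\cdot f).
\end{align*}
Comparing with convention \ref{lac} for distributions valued in a left $\GL(V)$-module, $(\mu|_{\gamma^{T}})(f)=(\gamma^{T})^{T}\cdot\mu(\gamma^{T}\cdot f)=\gamma\cdot\mu(\gamma^{T}\cdot f)$, yields the second assertion, with no sign.

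For the Shintani cocycle the one extra input is the covariance of $\sigma_n^{Sh}$. From $\sigma_n^{Sh}(\alpha_1,\cdots,\alpha_n)(w)=c(\alpha_1b(\epsilon_1),\cdots,\alpha_nb(\epsilon_n))(w)$ and the definition \ref{sign} of $c$, the identities $\det(\gamma v_1,\cdots,\gamma v_n)=(\det\gamma)\det(v_1,\cdots,v_n)$ and $w=\sum_j\lambda_j\gamma v_j\iff\gamma^{-1}w=\sum_j\lambda_jv_j$ give $c(\gamma v_1,\cdots,\gamma v_n)(w)=(\sign\det\gamma)\,c(v_1,\cdots,v_n)(\gamma^{-1}w)$ (the sign of the determinant taken in the ordered field $\FF$, whose order restricts to the usual one on the subfield $\bQ$ containing $\det\gamma$). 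Hence, after restriction to $\RR^n$, $\sigma_n^{Sh}(\gamma\gamma_1,\cdots,\gamma\gamma_n)=(\sign\gamma)\cdot\bigl(\gamma\cdot\sigma_n^{Sh}(\gamma_1,\cdots,\gamma_n)\bigr)$ with $\gamma\cdot(-)$ the precomposition with $\gamma^{-1}$ appearing in the displayed identity. Writing $\sigma_n^{Sh}(\gamma_1,\cdots,\gamma_n)$ as a finite $\bZ$-linear combination of characteristic functions of rational cones (Hill~\cite{H}), feeding this into the displayed identity cone-by-cone — using bilinearity of $\langle\cdot,\cdot\rangle_{SH}$ and that it descends modulo constant functions — and extracting the scalar $\sign\gamma$, one obtains $\Phi_n^{Sh}(\gamma\gamma_1,\cdots,\gamma\gamma_n)=(\sign\gamma)\,\Phi_n^{Sh}(\gamma_1,\cdots,\gamma_n)|_{\gamma^{T}}$. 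The extra $\sign\gamma$, absent for $\Phi_n^{NSh}$, is exactly the $\sign\det$ built into \ref{sign}.

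I expect the genuine difficulty here to be bookkeeping rather than anything conceptual: placing the transposes and the single $\sign\gamma$ precisely where the conventions demand — the left action on $V^{*}$ and on cones, the twisted convention \ref{lac} for left-module-valued distributions, the sign-twisted action \ref{sgc} on $\cL_{\bQ^n}$, and the substitution $T\mapsto T\gamma$ on $\bC((T_1,\cdots,T_n))$ — and verifying that the functoriality identity is unaffected both by the freedom in scaling spanning vectors into the period lattice and by the passage modulo constant functions. A secondary minor point is the locus on which $e_1\gamma_1^{T},\cdots,e_1\gamma_n^{T}$ (or the cones occurring in $\sigma_n^{Sh}$) degenerate, where one falls back on how $\langle\cdot,\cdot\rangle_{SH}$ and $\sigma_n^{Sh}$ are defined on such configurations.
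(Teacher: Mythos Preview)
Your proposal is correct and follows essentially the same approach as the paper: both reduce the equivariance to a single change-of-variables identity for the Solomon--Hu pairing (the paper's identity \ref{sgb}, your displayed functoriality identity), proved by the same explicit substitution in the lattice sum, and then feed in the covariance of the cone function. The only cosmetic difference is bookkeeping of the sign: the paper works with the sign-twisted action \ref{sgc} on $\cL_{\bQ^n}$ and cites Hill for the covariance of $\sigma_n^{Sh}$, so that a $\sign\gamma$ appears in the pairing identity \ref{sgb} and another in the action on cones (the two cancelling for $\Phi_n^{NSh}$), whereas you keep the pairing identity sign-free and locate the single $\sign\gamma$ for $\Phi_n^{Sh}$ directly in the $\sign\det$ of \ref{sign} --- a slightly more transparent and self-contained packaging of the same computation.
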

\begin{proof}
 Since we have
    \begin{align*}
        \Phi_n^{Sh}(\gamma\gamma_1, \cdots \gamma\gamma_n)(f) =& \langle \sigma_n^{Sh}(\gamma\gamma_1, \cdots \gamma\gamma_n), f \rangle_{SH} = \langle \gamma^{-T} \cdot \sigma_n^{Sh}(\gamma_1, \cdots \gamma_n), f \rangle_{SH}, \\
    &   (\text{by Proposition 3 (ii) and the definition of $(\a*c)(v)$ in subsection 3.1, \cite{H}})\\
           \Phi_n^{Sh}(\gamma_1, \cdots \gamma_n) |_{\gamma^{T}}(f)=& \gamma^{} \cdot \Phi_n^{Sh}(\gamma_1, \cdots \gamma_n)(\gamma^{T} \cdot f) = \gamma^{} \cdot \langle \sigma_n^{Sh}(\gamma_1, \cdots \gamma_n), \gamma^{T} \cdot f \rangle_{SH},
    \end{align*}
    it suffices to show that 
    \begin{align}\label{sgb}
    \langle \gamma \cdot c, f \rangle_{SH} = (\sign \gamma) \gamma^{-T} \cdot \langle c, \gamma^{-1} f \rangle_{SH}
    \end{align}
     for each cone function $c$. 
    Let $f = \chi_L$ with $L= \ZZ v_1 + \cdots + \ZZ v_r$ and $\gamma \cdot c = \chi_S$ with $S = \RR^+ v_1 + \cdots + \RR^+ v_r$ with $v_j \in L$. 
    This implies $\gamma^{-1} f = (v \mapsto f(v \cdot \gamma^{-1})) = \chi_{L \cdot \gamma}$ and $c = \gamma^{-1} (\gamma \cdot c) = (\sign \gamma) \cdot (v \mapsto \chi_S(v \cdot \gamma^{-1})) = (\sign \gamma) \cdot \chi_{S \cdot \gamma}$. Let $P_{c} = (0,1] v_1 + \cdots + (0,1] v_r$. Then (with notation $X=(T_1, \cdots, T_n)$ to avoid confusion with the transpose $\g^T$ of the matrix $\g$)
    \begin{align*}
        \langle \gamma \cdot c, f \rangle_{SH} = \frac1{1-e^{{X} \cdot v_1}} \cdots \frac1{1-e^{{X} \cdot v_r}} \sum_{w \in P_{c}} f(w) e^{{X} \cdot w}
    \end{align*}
    and
    \begin{align*}
        (\sign \gamma) \gamma^{-T} \cdot \langle c, \gamma^{-1} f \rangle_{SH} =& (\sign \gamma) \gamma^{-T} \cdot \langle (\sign \gamma) \cdot \chi_{S \cdot \gamma}, \chi_{L\cdot \gamma} \rangle_{SH} \\
	=& (\sign \gamma)^2 \cdot \gamma^{-T} \cdot \left( \frac1{1-e^{{X} \cdot (v_1 \cdot \gamma)}} \cdots \frac1{1-e^{{X} \cdot (v_r \cdot \gamma)}} \sum_{w \in P_{c} \cdot \gamma} (\gamma^{-1} f)(w) e^{{X} \cdot w} \right) \\
        =& \gamma^{-T} \cdot \left( \frac1{1-e^{{X} \cdot (v_1 \cdot \gamma)}} \cdots \frac1{1-e^{{X} \cdot (v_r \cdot \gamma)}} \sum_{w \in P_{c}} (\gamma^{-1} f )(w \cdot \gamma) e^{{X} \cdot (w \cdot \gamma)} \right) \\
        =& \gamma^{-T} \cdot \left( \frac1{1-e^{{X} \cdot (v_1\cdot \gamma)}} \cdots \frac1{1-e^{{X} \cdot (v_r \cdot \gamma)}} \sum_{w \in P_{c}} f(w) e^{{X} \cdot (w \cdot \gamma)} \right) \\
        =& \frac1{1-e^{({X} \cdot \gamma^{-T}) \cdot (v_1\cdot \gamma)}} \cdots \frac1{1-e^{({X} \cdot \gamma^{-T}) \cdot (v_r \cdot \gamma)}} \sum_{w \in P_{c}} f(w) e^{({X} \cdot \gamma^{-T}) \cdot (w \cdot \gamma)} \\
        =& \frac1{1-e^{{X} \cdot v_1}} \cdots \frac1{1-e^{{X} \cdot v_r}} \sum_{w \in P_{c}} f(w) e^{{X} \cdot w} \\
        =& \langle \gamma \cdot c, f \rangle_{SH}.
    \end{align*}

   The $\GL_n(\bQ)$-equivariance for $\Phi_n^{NSh}$ follows from the following computation:
  \begin{align*}
  \Phi_n^{NSh}(\g\g_1, \cdots, \g\g_n) (f) 
  =& \langle \chi_{\bR^+e_1 (\g \g_1)^T + \cdots +\bR^+e_1 (\g \g_n)^T }, f \rangle_{SH}      \\  
  =&\sign(\gamma) \langle \g^{-T} \cdot \chi_{\bR^+e_1 (\g_1)^T + \cdots + \bR^+e_1 (\g_n)^T }, f \rangle_{SH}   \quad(\text{by \ref{sgc}})     \\
  =&\sign(\gamma)^2 \g \cdot \langle \chi_{\bR^+e_1 (\g_1)^T + \cdots + \bR^+e_1 (\g_n)^T } , \g^T\cdot f\rangle_{SH} \quad (\text{by \ref{sgb}})     \\
  =&\g \cdot \left( \Phi_n^{NSh}(\g_1, \cdots, \g_n) \right)(\g^T\cdot f) \quad (\text{by definition})\\
  =&\left( \Phi_n^{NSh}(\g_1, \cdots, \g_n)|_{\g^T} \right)(f) \quad(\text{by \ref{lac}}).
  \end{align*}  
\end{proof}

\subsection{Appendix II: Fourier Theory on $\cS(V)$ }
\label{sec4.2}

Here we summarize some results on the Fourier theory on $\cS(V)$ with the Lattice Topology. We refer to \cite{C} for more details on naive distributions and the Fourier theory on $\cS(V)$.

As $\iota_{V,W}: \cS(V) \otimes \cS(W) \cong \cS (V \times W)$, we abuse notation and sometimes denote $\iota_{V,W}(f \otimes g)$ by $f \otimes g$.
Let $\textbf{h}_{L_0}$ be the unique translation-invariant distribution satisfying $\textbf h_{L_0}(L_0) = 1$. This gives, for example, $\textbf h_\ZZ(\chi_{a+d\ZZ}) = \frac1d$. 
For a $n$-dimensional $\QQ$-vector space $V$, a lattice $L \subseteq V$, a symmetric non-degenerate bilinear pairing $\langle \cdot,\cdot \rangle$ and a $\QQ^{\on{ab}}$-algebra $R$, define the Fourier transform by
\begin{align*}
    & \cF: \cS(V,R) \rightarrow \cS(V,R) \\
    & \cF(y) = \hat f(y) := \int_V f(x) e^{-2\pi i\langle x,y \rangle} \mathrm{d} \textbf h_{L}(x).
\end{align*}
In our case, we assume that $V = \QQ^n, L = \ZZ^n, \langle (x_1, \cdots x_n),(y_1, \cdots y_n) \rangle = \sum_i x_i y_i$. 
We only state the necessary propositions without proofs except for Proposition \ref{kpf}.

\begin{prop}
    For $f \in \cS(\QQ^m), g \in \cS(\QQ^n)$,
    $$\cF(f \otimes g) = \cF(f) \otimes \cF(g).$$
\end{prop}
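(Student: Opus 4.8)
The plan is to reduce to factorizable test functions and then exploit that, on $\QQ^{m+n} = \QQ^m \times \QQ^n$, both the translation-invariant distribution $\textbf h$ and the bilinear pairing split as direct sums. First I would observe that $\cF$ is $\QQ^{\on{ab}}$-linear and that $(f,g) \mapsto \cF(f) \otimes \cF(g)$ is bilinear; since $\cS(\QQ^m)$ (resp.\ $\cS(\QQ^n)$) is generated by characteristic functions $\chi_{a + d\ZZ^m}$ (resp.\ $\chi_{b + e\ZZ^n}$) and $\iota_{\QQ^m,\QQ^n}$ identifies $\cS(\QQ^m) \otimes \cS(\QQ^n)$ with $\cS(\QQ^{m+n})$, it suffices to prove the identity for $f = \chi_{a + d\ZZ^m}$ and $g = \chi_{b + e\ZZ^n}$, in which case $f \otimes g = \chi_{(a,b) + (d\ZZ^m \times e\ZZ^n)}$.

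Next I would record the factorization $\textbf h_{\ZZ^{m+n}} = \textbf h_{\ZZ^m} \otimes \textbf h_{\ZZ^n}$: the distribution on the right is translation-invariant and assigns the value $1$ to $\ZZ^{m+n} = \ZZ^m \times \ZZ^n$, hence coincides with $\textbf h_{\ZZ^{m+n}}$ by the uniqueness clause in its definition. Likewise the standard pairing on $\QQ^{m+n}$ decomposes as $\langle (x',x''),(y',y'') \rangle = \langle x',y' \rangle + \langle x'',y'' \rangle$, so $e^{-2\pi i \langle (x',x''),(y',y'') \rangle} = e^{-2\pi i \langle x',y' \rangle}\, e^{-2\pi i \langle x'',y'' \rangle}$. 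Combining these, for $y = (y',y'') \in \QQ^m \times \QQ^n$ the integral defining $\cF(f \otimes g)(y)$ becomes the integral, against $\textbf h_{\ZZ^m} \otimes \textbf h_{\ZZ^n}$, of the product of $x' \mapsto f(x')\, e^{-2\pi i \langle x',y' \rangle}$ and $x'' \mapsto g(x'')\, e^{-2\pi i \langle x'',y'' \rangle}$; by the Fubini-type property of the product distribution on such factorizable integrands this equals
\[
\Big(\int_{\QQ^m} f(x')\, e^{-2\pi i \langle x',y' \rangle}\, \mathrm{d}\textbf h_{\ZZ^m}(x')\Big)\cdot\Big(\int_{\QQ^n} g(x'')\, e^{-2\pi i \langle x'',y'' \rangle}\, \mathrm{d}\textbf h_{\ZZ^n}(x'')\Big) = \hat f(y')\,\hat g(y'') = (\hat f \otimes \hat g)(y),
\]
which is the claim.

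The step I expect to be the main (if modest) obstacle is the Fubini-type factorization: one must check that evaluating $\textbf h_{\ZZ^m} \otimes \textbf h_{\ZZ^n}$ on $\iota_{\QQ^m,\QQ^n}(\phi \otimes \psi)$ returns $\textbf h_{\ZZ^m}(\phi)\,\textbf h_{\ZZ^n}(\psi)$. This is essentially the definition of the tensor-product distribution, and is in any case part of the Fourier formalism of \cite{C}, so no new work is needed. Alternatively one can sidestep Fubini altogether and evaluate both sides directly on the generators above, using the multivariate analogue of the one-variable identity $\hat{\chi}_{a+d\ZZ}(y) = \tfrac1d e^{-2\pi i a y}\chi_{\frac1d\ZZ}(y)$ recalled in the excerpt, and observing that the two resulting expressions for $\cF(f\otimes g)$ and $\cF(f)\otimes\cF(g)$ coincide termwise.
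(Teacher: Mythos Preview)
Your argument is correct and entirely standard. Note, however, that the paper does not actually prove this proposition: it explicitly states that the results in this appendix are given without proof (deferring to \cite{C}) except for Proposition~\ref{kpf}, so there is no proof in the paper to compare against.
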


\begin{prop}
    For $L = (a_1 + d_1 \ZZ) \times \cdots \times (a_n + d_n \ZZ)$ and $L' = (\frac1{d_1} \ZZ) \times \cdots \times (\frac1{d_n} \ZZ)$,
    \begin{align*}
        \widehat{\chi_L} (y) = \frac1{d_1 \cdots d_n} e^{-2\pi i \langle a, y \rangle} \chi_{L'}(y).
    \end{align*}
\end{prop}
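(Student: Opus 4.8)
The plan is to reduce immediately to the one‑variable case via multiplicativity of the Fourier transform, and then finish with an elementary root‑of‑unity identity.

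\emph{Reduction to $n=1$.} Since $\chi_L=\chi_{a_1+d_1\ZZ}\otimes\cdots\otimes\chi_{a_n+d_n\ZZ}$ and the asserted right‑hand side factors as $\prod_{j=1}^{n}\tfrac1{d_j}e^{-2\pi i a_j y_j}\chi_{\frac1{d_j}\ZZ}(y_j)$, the preceding proposition (that $\cF$ respects $\otimes$) reduces the claim to the single‑variable identity
\[
\widehat{\chi_{a+d\ZZ}}(y)=\tfrac1d\,e^{-2\pi i a y}\,\chi_{\frac1d\ZZ}(y),\qquad a\in\QQ,\ d\in\QQ^{+},\ y\in\QQ .
\]

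\emph{The one‑variable computation.} Fix $y\in\QQ$ and choose $m\in\bN$ with $dm y\in\ZZ$. Partitioning $a+d\ZZ=\bigsqcup_{j=0}^{m-1}\bigl(a+dj+dm\ZZ\bigr)$, the function $x\mapsto e^{-2\pi i xy}$ is constant on each coset with value $e^{-2\pi i(a+dj)y}$, because $dm\ZZ\cdot y\subseteq\ZZ$; hence $x\mapsto\chi_{a+d\ZZ}(x)e^{-2\pi i xy}$ is a genuine element of $\cS(\QQ,R)$ (locally constant of bounded support, its values lying in $R$ since $R$ is a $\QQ^{\on{ab}}$‑algebra). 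Applying the translation‑invariant distribution $\mathbf{h}_{\ZZ}$ coset by coset, and using $\mathbf{h}_{\ZZ}(\chi_{b+dm\ZZ})=\tfrac1{dm}$, I obtain
\[
\widehat{\chi_{a+d\ZZ}}(y)=\sum_{j=0}^{m-1}e^{-2\pi i(a+dj)y}\,\mathbf{h}_{\ZZ}\!\bigl(\chi_{a+dj+dm\ZZ}\bigr)=\frac{e^{-2\pi i a y}}{dm}\sum_{j=0}^{m-1}\zeta^{j},\qquad\zeta:=e^{-2\pi i dy}.
\]
Since $\zeta^{m}=e^{-2\pi i dm y}=1$, the geometric sum equals $m$ when $\zeta=1$, i.e.\ when $dy\in\ZZ$, i.e.\ when $y\in\tfrac1d\ZZ$, and equals $\tfrac{1-\zeta^{m}}{1-\zeta}=0$ otherwise. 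Thus $\widehat{\chi_{a+d\ZZ}}(y)=\tfrac1d e^{-2\pi i a y}$ on $\tfrac1d\ZZ$ and $0$ off it, which is exactly the claimed one‑variable formula; together with the reduction step this proves the proposition.

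\emph{Main obstacle.} There is essentially none; the only points deserving care are that the integrand must be checked to be an honest element of $\cS(\QQ,R)$ for every $y$ — which is precisely why the coefficient ring is taken to contain all roots of unity — and that the vanishing for $y\notin\tfrac1d\ZZ$ is forced by the orthogonality relations for roots of unity rather than by a support argument, since $a+d\ZZ$ is already bounded in the lattice topology. The general $n$ then follows purely formally from the tensor‑product formula for $\cF$.
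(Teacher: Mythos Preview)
Your proof is correct. The paper does not actually prove this proposition: the appendix explicitly states that all propositions in this subsection are given without proof except for Proposition~\ref{kpf}, with a reference to \cite{C} for details. So there is no argument in the paper to compare yours against; your reduction to the one-variable case via the tensor formula for $\cF$, followed by the standard orthogonality-of-roots-of-unity computation, is exactly the expected approach and fills the gap cleanly.
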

%

\begin{prop}\label{lp}
    For $V = \QQ^n$, a $\QQ^{ab}$-algebra $R$, $f \in \cS(V)$, and $\gamma \in \GL(V)$, 
    \begin{align*}
        \int_V f( x \cdot \gamma) \mathrm{d} \textbf h_{\ZZ^n} (x) = |\det \gamma| \cdot \int_V f(x) \mathrm{d}\textbf h_{\ZZ^n} (x).
    \end{align*}
\end{prop}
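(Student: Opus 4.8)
The plan is to reduce the identity to characteristic functions and then to a covolume computation for lattices. Since both sides of the asserted equality are $\ZZ$-linear in $f$, and $\cS(V)$ is generated by the functions $\chi_{\underline a + d\ZZ^n}$ with $\underline a \in \QQ^n$, $d \in \NN$, it suffices to prove the identity for $f = \chi_{\underline a + d\ZZ^n}$. For such an $f$, the convention $(\gamma\cdot f)(x) = f(x\cdot\gamma)$ shows $f(x\cdot\gamma) = 1$ exactly when $x \cdot \gamma \in \underline a + d\ZZ^n$, i.e. when $x \in \underline a\gamma^{-1} + d\ZZ^n\gamma^{-1}$; hence the function $x \mapsto f(x\cdot\gamma)$ equals $\chi_{\underline a\gamma^{-1} + d\ZZ^n\gamma^{-1}}$, and $d\ZZ^n\gamma^{-1}$ is again a lattice in $\QQ^n$. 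By translation invariance of $\mathbf{h}_{\ZZ^n}$ we get $\int_V f(x\cdot\gamma)\,d\mathbf{h}_{\ZZ^n}(x) = \mathbf{h}_{\ZZ^n}(\chi_{d\ZZ^n\gamma^{-1}})$ and $\int_V f(x)\,d\mathbf{h}_{\ZZ^n}(x) = \mathbf{h}_{\ZZ^n}(\chi_{d\ZZ^n})$, so the claim becomes $\mathbf{h}_{\ZZ^n}(\chi_{d\ZZ^n\gamma^{-1}}) = |\det\gamma|\cdot\mathbf{h}_{\ZZ^n}(\chi_{d\ZZ^n})$.

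The key step is a general covolume formula: for every $A \in \GL_n(\QQ)$ one has $\mathbf{h}_{\ZZ^n}(\chi_{\ZZ^n A}) = |\det A|^{-1}$. To prove it, choose a common finite-index sublattice $M \subseteq (\ZZ^n A) \cap \ZZ^n$ (for instance $M = N\ZZ^n$ with $N \in \NN$ chosen so that $NA^{-1}$ has integer entries). Writing $\chi_{\ZZ^n} = \sum_{c \in \ZZ^n/M}\chi_{c+M}$ and using translation invariance together with $\mathbf{h}_{\ZZ^n}(\chi_{\ZZ^n}) = 1$ yields $\mathbf{h}_{\ZZ^n}(\chi_M) = [\ZZ^n : M]^{-1}$; the same computation with $\ZZ^n A$ in place of $\ZZ^n$ gives $\mathbf{h}_{\ZZ^n}(\chi_{\ZZ^n A}) = [\ZZ^n A : M]\cdot[\ZZ^n : M]^{-1}$. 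The tower law for indices shows this ratio is independent of the choice of $M$, and comparing covolumes (so that $[\ZZ^n : M] = [\ZZ^n A : M]\cdot|\det A|$) identifies it with $|\det A|^{-1}$. Applying this with $A = d\gamma^{-1}$ and with $A = dI_n$ gives $\mathbf{h}_{\ZZ^n}(\chi_{d\ZZ^n\gamma^{-1}}) = d^{-n}|\det\gamma|$ and $\mathbf{h}_{\ZZ^n}(\chi_{d\ZZ^n}) = d^{-n}$, whose ratio is $|\det\gamma|$, as required.

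The only point needing genuine care is this covolume identity, namely that $[\ZZ^n A : M]/[\ZZ^n : M]$ is independent of the auxiliary common sublattice $M$ and equals $|\det A|^{-1}$; everything else (linearity, the coset decompositions, and the bookkeeping of how $\gamma^{-1}$ acts on the right so as to be consistent with $(\gamma\cdot f)(x) = f(x\cdot\gamma)$) is routine. Alternatively, if \cite{C} already records $\mathbf{h}_{\ZZ^n}(\chi_L) = \operatorname{covol}(L)^{-1}$ for lattices $L$ commensurable with $\ZZ^n$, one may invoke it directly and skip the middle paragraph.
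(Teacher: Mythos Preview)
Your argument is correct. The reduction to characteristic functions via $\ZZ$-linearity, the use of translation invariance to strip off the coset representative, and the covolume computation $\mathbf{h}_{\ZZ^n}(\chi_{\ZZ^n A}) = |\det A|^{-1}$ via a common finite-index sublattice are all sound; the final numerical check with $A = d\gamma^{-1}$ and $A = dI_n$ gives exactly the claimed factor $|\det\gamma|$.

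As for comparison with the paper: there is nothing to compare. The paper explicitly says in the appendix that ``we only state the necessary propositions without proofs except for Proposition~\ref{kpf}'', so Proposition~\ref{lp} is stated without proof and the reader is referred to \cite{C}. Your write-up therefore supplies what the paper omits. The alternative you mention at the end---invoking directly that $\mathbf{h}_{\ZZ^n}(\chi_L) = \operatorname{covol}(L)^{-1}$ from \cite{C}---is precisely the route the paper implicitly takes by deferring to that reference.
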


\begin{prop} \label{kpf}
    For $V = \QQ^n$, $f \in \cS(V)$ and $\gamma \in \GL(V)$,
    $$\widehat{\gamma \cdot f} = |\det \gamma| \cdot ((\gamma^{-1})^T \cdot \hat f).$$
\end{prop}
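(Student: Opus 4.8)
The plan is to unwind the definition of the Fourier transform $\cF$ and of the $\GL(V)$-action on $\cS(V)$, perform the linear change of variables $x\mapsto x\cdot\gamma$, and extract the Jacobian factor $|\det\gamma|$ from Proposition~\ref{lp}. So the heart of the matter is the identity "Fourier transform intertwines the regular representation with its contragredient up to the modulus $|\det\gamma|$", transported to the naive setting of \cite{C}.

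First I would write, for $y\in\QQ^n$,
\[
\widehat{\gamma\cdot f}(y)=\int_V (\gamma\cdot f)(x)\,e^{-2\pi i\langle x,y\rangle}\,d\mathbf{h}_{\ZZ^n}(x)=\int_V f(x\cdot\gamma)\,e^{-2\pi i\langle x,y\rangle}\,d\mathbf{h}_{\ZZ^n}(x).
\]
The key elementary observation is that for the standard symmetric pairing $\langle a,b\rangle=\sum_i a_ib_i$ one has $\langle a\cdot\delta,b\rangle=\langle a,\,b\cdot\delta^{T}\rangle$ for every $\delta\in\GL(V)$ (the adjoint of right multiplication by $\delta$ is right multiplication by $\delta^{T}$). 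Applying this with $\delta=\gamma$ and $b=y\cdot(\gamma^{-1})^{T}$, and using $\gamma^{T}(\gamma^{-1})^{T}=I$, gives $\langle x\cdot\gamma,\,y\cdot(\gamma^{-1})^{T}\rangle=\langle x,y\rangle$. Hence the function $\phi(u):=f(u)\,e^{-2\pi i\langle u,\,y\cdot(\gamma^{-1})^{T}\rangle}$ satisfies $\phi(x\cdot\gamma)=f(x\cdot\gamma)\,e^{-2\pi i\langle x,y\rangle}$, so the displayed integral equals $\int_V\phi(x\cdot\gamma)\,d\mathbf{h}_{\ZZ^n}(x)$.

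Next I would invoke Proposition~\ref{lp}, which yields $\int_V\phi(x\cdot\gamma)\,d\mathbf{h}_{\ZZ^n}(x)=|\det\gamma|\int_V\phi(u)\,d\mathbf{h}_{\ZZ^n}(u)$, and then recognise the right-hand side as $|\det\gamma|\,\hat f\big(y\cdot(\gamma^{-1})^{T}\big)$. Finally, applying the left-action convention $(\delta\cdot g)(y)=g(y\cdot\delta)$ from Appendix~I with $\delta=(\gamma^{-1})^{T}$ identifies this with $|\det\gamma|\cdot\big((\gamma^{-1})^{T}\cdot\hat f\big)(y)$, which is the claim.

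The only real obstacle is bookkeeping rather than substance. There are two points of care: (i) keeping the row-vector / right-action conventions consistent, since it is precisely the fact that the adjoint of $x\mapsto x\gamma$ is $x\mapsto x\gamma^{T}$ (not $x\gamma^{-T}$) that produces the transpose appearing in the statement; and (ii) the function $\phi$ is not literally an element of $\cS(V)$ because of the exponential factor, so Proposition~\ref{lp} must be applied after first reducing, by $\ZZ$-linearity of $f\mapsto\widehat f$ and of $f\mapsto\gamma\cdot f$, to test functions $f=\chi_{a+d\ZZ^n}$, for which the definition of $\cF$ in \cite{C} is an explicit finite expression and the substitution is manifestly legitimate — this is the same coset-decomposition device already used in Theorem~\ref{mt} and Lemma~\ref{lemo}. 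With those conventions pinned down there is no conceptual difficulty.
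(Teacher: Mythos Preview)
Your argument is essentially identical to the paper's own proof: expand the definition, rewrite $\langle x,y\rangle=\langle x\cdot\gamma,\,y\cdot(\gamma^{-1})^{T}\rangle$, apply Proposition~\ref{lp} for the change of variables, and then read off the action convention. The paper carries this out in a single chain of equalities without your caveat~(ii); it applies Proposition~\ref{lp} directly to the integrand $f(x\cdot\gamma)e^{-2\pi i\langle x\cdot\gamma\cdot\gamma^{-1},y\rangle}$, implicitly treating it as an element of $\cS(V,R)$ for the $\QQ^{\mathrm{ab}}$-algebra $R$ (which is legitimate since the exponential factor is locally constant on any coset $a+d\ZZ^n$).
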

\begin{proof}
    \begin{align*}
        \widehat{\gamma \cdot f}(y) =& \int_V (\gamma \cdot f)(x) e^{-2\pi i \langle x,y \rangle} \mathrm{d} \textbf h_{\ZZ^n}(x) \\
        =& \int_V f(x \cdot \gamma) e^{-2\pi i \langle x \cdot \gamma \cdot \gamma^{-1},y \rangle} \mathrm{d} \textbf h_{\ZZ^n}(x)  \\
        =& |\det \gamma| \int_V f(x) e^{-2\pi i \langle x \cdot \gamma^{-1},y \rangle} \mathrm{d} \textbf h_{\ZZ^n}(x) 
        \quad \text{ (Proposition \ref{lp}) }\\
        =& |\det \gamma| \int_V f(x) e^{-2\pi i \langle x ,y \cdot (\gamma^{-1})^T \rangle} \mathrm{d} \textbf h_{\ZZ^n}(x) \\
        =& |\det \gamma| \hat f (y \cdot (\gamma^{-1})^T ) \\
        =& |\det \gamma| ((\gamma^{-1})^T \cdot \hat f)(y).
    \end{align*}
\end{proof}

\end{document}